\documentclass[11pt,reqno,section]{amsart}
\usepackage[english]{babel}
\usepackage[utf8]{inputenc}
\usepackage{amsmath}
\usepackage{amsthm}
\usepackage{verbatim}
\usepackage{enumerate}
\usepackage{amssymb}
\usepackage{amsfonts}
\usepackage{amscd,bezier}
\usepackage{anysize}
\usepackage{indentfirst}
\usepackage{upref}
\usepackage{color}
\usepackage[pagewise]{lineno}
\usepackage{tikz}
\usepackage{theoremref}
\usetikzlibrary{matrix}
\usepackage{pstricks}
\usepackage{subfig}
\usepackage{graphicx}
\usepackage{pstricks}
\usepackage{amscd}
\usepackage{relsize}
\usepackage{empheq}
\usepackage[toc,page]{appendix}
\usepackage{commath}
\usepackage{mathabx}
\usepackage{MnSymbol}
\usepackage{xcolor}
\usepackage[hidelinks]{hyperref}
\allowdisplaybreaks
\numberwithin{equation}{section}

\newtheorem{theorem}{Theorem}[section]
\newtheorem{lema}{Lemma}[section]
\newtheorem{remark}{Remark}[section]
\newtheorem{de}{Definition}[section]
\newtheorem{pr}{Proposition}[section]
\newtheorem{co}{Corollary}[section]
\newtheorem{example}{{Example}}[section]

\marginsize{3cm}{3cm}{3cm}{3cm}

\newcommand{\real}{\mathbb{R}}
\newcommand{\nat}{\mathbb{N}}
\newcommand{\bb}{\mathcal{B}}

\newcommand{\lc}{{\mathcal L}}

\newcommand{\lx}{{\mathcal L}(X)}

\newcommand{\xr}{x_{r(t)}}

\newcommand{\xt}{x_{t}}
\newcommand{\vp}{\varphi}
\newcommand{\ta}{\theta}
\newcommand{\ve}{\varepsilon}
\newcommand{\al}{\alpha}
\newcommand{\sa}{\sigma}
\newcommand{\La}{\Lambda}
\newcommand{\la}{\lambda}
\newcommand{\om}{\omega}
\newcommand{\Om}{\Omega}

\title[Abstract Functional differential equations]
{Evolution families and variation of constants formula for abstract functional differential equations with time-dependent infinite delay.}

\author[C. Carrasco]{Claudio Carrasco $^{a}$}
\address{$^{a}$Universidad de Chile, Departamento de Matem\'aticas.
	Casilla 653, Santiago, Chile.}
\email{claudio.carrasco.g@ug.uchile.cl} \thanks{}

\author[C. A. Gallegos]{Claudio A. Gallegos$^{b}$}
\address{$^{b}$ Universidad de Chile, Departamento de Matem\'aticas.
	Casilla 653, Santiago, Chile.}
\email{claudiogallegos@uchile.cl} \thanks{}
    
\author[H. R. Henr\'{\i}quez]{Hern\'an R. Henr\'{\i}quez$^{c}$}
\address{$^{c}$Universidad de Santiago, USACH, Departamento de Matem\'atica y Ciencia de la Computaci\'on, Casilla 307,
	correo 2, Santiago, Chile. }
\email{hernan.henriquez@usach.cl}
\author[M. F. Pinaud]{Matthieu F. Pinaud$^{d}$}\address{$^{d}$ Universidad de Santiago, USACH, Departamento de Matem\'atica y Ciencia de la Computaci\'on, Casilla 307,
	correo 2, Santiago, Chile.}
\email{matthieu.pinaud@usach.cl} \thanks{
	}

\begin{document}

\begin{abstract}
In this paper, we consider a class of first-order abstract retarded functional differential equations in Banach spaces, incorporating a time-dependent infinite delay governed by a regulated function. We establish the existence of mild solutions for the nonlinear equation and show that the family of solution maps for the linear equation forms a well-defined evolution family of bounded linear operators on an appropriate phase space. Furthermore, we leverage this evolution family to prove a variation of constants formula for the inhomogeneous linear problem.
\end{abstract}

\dedicatory{Dedicated to the memory of Hern\'an R. Henr\'iquez.}

\subjclass[2020]{Primary: 34K40, 34K30; Secondary: 35G20, 47D09.}

\keywords{Abstract retarded functional differential equations;
First-order abstract Cauchy problem; Evolution family; Mild solutions}

\maketitle

\setcounter{equation}{0}
\setcounter{pr}{0}
\setcounter{de}{0}
\setcounter{theorem}{0}
\setcounter{lema}{0}
\setcounter{co}{0}
\setcounter{example}{0}

\pagestyle{myheadings} \markboth{\hfil  C. Carrasco, C. A. Gallegos,  H. R. Henr\'{i}quez and M. F. Pinaud
\hfil $\hspace{3cm}$ } {\hfil$\hspace{1.5cm}$
{Abstract  functional differential equations }
\hfil}

\section{Introduction.} \label{introd}
Abstract retarded functional differential equations (ARFDEs) provide the natural mathe\-matical framework for hereditary systems, where the state evolution depends on the past history. These equations naturally arise in the modeling of viscoelastic materials, structured population dynamics, and thermal conduction with fading memory, among other physical phenomena; for the general theory of these framework and its diverse applications, we refer the reader to the classical monographs \cite{BaP,BDDM,GO,HA1,KM,McK,vlasov,Wu}.

In this work, we study the existence of mild solutions for ARFDEs featuring time-dependent infinite delay. Motivated by the recent framework established in \cite{HMH}, we focus our attention on the semilinear abstract Cauchy problem
\begin{empheq}[left=\empheqlbrace]{align}
\quad x'(t) &= Ax(t) + f(t, x_{r(t)}), \quad 0 \leq t \leq a, \label{equ1.2} \\
 \quad x_0 &= \varphi, \label{equ1.3}
\end{empheq}
where $a > 0$, $x(t)$ takes values in a Banach space $X$ endowed with the norm $\|\cdot\|$, $r\colon [0, a] \to \mathbb{R}$ is a regulated function satisfying $r(t) \leq t$ for all $t \in [0,a]$, and $A\colon D(A) \subseteq X \to X$ is the infinitesimal generator of a strongly continuous semigroup of bounded linear operators $(T(t))_{t \geq 0}$ on $X$. Moreover, as usual, for any function $x\colon(-\infty, a] \to X$ and each $t \in [0, a]$, the history function $x_t\colon (-\infty, 0] \to X$ is defined by $x_t(\theta) = x(t + \theta)$, for all $\theta \leq 0$. To analyze this problem in the infinite delay context, it is crucial to specify an appropriate phase space $\mathcal{B}$ for initial conditions $\varphi$, which contains these history functions $x_t$.

It is worth noting that $f(t,\xr)$ can be rewritten in a simple non-autonomous form by defining a function $g(t,\cdot)$ such that
\begin{equation}\label{eq: transformation}
    f(t,\xr) = g(t,\xt), \quad t \geq 0.
\end{equation}

Indeed, this is achieved by setting $g(t,\vp) = f(t,\vp_{r(t)-t})$. However, this transformation introduces two major drawbacks for the development of the theory. First, it requires ensuring that $\vp_{r(t)-t}$ belongs to the phase space $\bb$. Second, the time dependence of $g(t,\vp)$ becomes as intricate as that of $\vp_{r(t)-t}$ itself. Moreover, as will be discussed in Section~\ref{prel}, the function $\vp_{r(t)-t}$ does not, in general, belongs to the phase space $\bb$.

\subsection{Main contributions and outline of the work}

As a first objective, this work establishes new criteria for the existence of mild solutions to the semilinear problem \eqref{equ1.2}--\eqref{equ1.3}. Specifically, we complement and extend the recent framework of \cite{HMH}, including existence results under general conditions as well as under the assumption that the associated semigroup is compact or immediately norm-continuous, see Theorems~\ref{T7}, \ref{T3.1} and Corollaries~\ref{C3.1}, \ref{C3.2}.

As a concrete application, we consider a partial integro-differential equation describing a diffusion process with memory and a time-dependent delay in the one-dimensional spatial domain $[0, \pi]$. This system is described by the initial-boundary value problem
\begin{empheq}[left=\empheqlbrace]{align}
\frac{\partial w(t, \xi)}{\partial t} & =  \frac{\partial^{2} w(t, \xi)}{\partial \xi^{2}} + \int_{-\infty}^{t} P(s-t) w(s- t + r(t), \xi) \, d s + h_{0}(t, \xi), \quad 0 \leq t \leq a,  \label{equ1.4} \\
w(t, 0) &= w(t, \pi) = 0, \quad 0 \leq t \leq a, \label{equ1.5} \\
w(\theta, \xi) & =  \varphi(\theta, \xi), \quad - \infty < \theta \leq 0, \label{equ1.6}
\end{empheq}
where $w\colon (-\infty, a] \times [0, \pi] \to \mathbb{R}$ represents the temperature distribution. Here, $P\colon(-\infty, 0] \to \mathbb{R}$ is a Lebesgue measurable function, and $h_{0}$, $r$, and $\varphi$ are suitable functions whose properties are detailed in Section~\ref{applications}.

The central contribution of this paper lies in the study of the abstract linear structure associated with the model described above. Specifically, we analyze the linear non-homogeneous ARFDE with time-dependent infinite delay of type
\begin{empheq}[left=\empheqlbrace]{align}
x'(t) &= A x(t) + \Lambda(x_{r(t)}) + h(t), \quad 0 \leq s \leq t \leq a, \label{equ4.1}\\
x_{s} &= \varphi, \label{equ4.2}
\end{empheq}
where $\Lambda : \mathcal{B} \to X$ is a bounded linear operator and $h : [0, a] \to X$ is integrable. By restricting to an appropriate phase space $\mathcal{B}_{\tau}$ containing the initial condition $\varphi$, we construct a well-defined evolution family $U_{\Lambda}(t,s)$ on $\mathcal{B}_{\tau}$ through a Dyson--Phillips type series, see Proposition~\ref{serie D-P}. Moreover, we prove that this evolution family yields the mild solution of the homogeneous problem, establishing the fundamental identification of the state history $x_t$ with the action $U_{\Lambda}(t,s)\varphi$, see Theorem~\ref{P4.1}. In addition, our result characterizes the underlying structural properties of the family, such as its translation property and its formulation as a perturbed integral equation. These developments serve as the essential foundation to derive our main result Theorem~\ref{equ4.6}: an explicit variation of constants formula for the non-homogeneous system. 

To the best of our knowledge, this is the first such representation formula reported in the literature for this class of equations with time-dependent infinite delay.

\medskip

\subsection{Terminology  and notations}

Throughout this manuscript, we adopt the standard conventions and framework of functional analysis. In particular, if $(Z,\|\cdot \|_{Z})$ and
$(Y, \|\cdot \|_{Y})$ are Banach spaces, we denote by
${\mathcal{L}}(Z,Y)$ the Banach space of bounded linear operators
from $Z$ into $Y$ endowed with the norm of operators, and
we abbreviate this notation to ${\mathcal{L}}(Z)$ whenever $Z=Y$.
Throughout  this paper, $I = [0, a]$ for $a > 0$ is a fixed interval, and
$C(I,Z) $ is the space of continuous functions
from $I$ into $Z$ endowed with the norm of the  uniform
convergence. Similarly, $C_{b}((-\infty, 0], Z) $ denotes the space of bounded continuous functions from
$(-\infty, 0]$ into $Z$ endowed with the norm of uniform convergence, and $C_{0}((-\infty, 0], Z) $ is the subspace of
$C_{b}((-\infty, 0], Z) $ consisting of functions $x$ such that $x(\ta) \to 0$ as $\ta \to - \infty$.
 Moreover, $B_{R}(x,Z)$ denotes the
closed ball with center $x$ and radius $R$ in $Z$. When  the space $Z$ is clearly
determined from the context, we  abbreviate this notation
to  $B_{R}(x)$.

\setcounter{equation}{0}
\setcounter{remark}{0}
\setcounter{pr}{0}
\setcounter{de}{0}
\setcounter{theorem}{0}
\setcounter{lema}{0}
\setcounter{co}{0}
\setcounter{example}{0}

\setcounter{equation}{0}
\setcounter{remark}{0}
\setcounter{pr}{0}
\setcounter{de}{0}
\setcounter{theorem}{0}
\setcounter{lema}{0}
\setcounter{co}{0}
\setcounter{example}{0}

\section{Preliminaries} \label{prel}

In this section we introduce the concept of phase space, and we review some fundamental
properties  about the abstract Cauchy problem of first order which are needed to establish
our results.

To study retarded functional differential equations with infinite delay we need to introduce
the concept of phase space.
In this paper, we use the axiomatic definition of phase space
$\mathcal{B}$ introduced by Hino et al. in \cite{Hi}.
 Thus, $\bb$ will be a linear space
of functions mapping $ \; ( - \infty , 0]$ into $ X $ endowed with
a norm $\| \cdot \|_{\bb}$.
 We will assume that $\bb$ satisfies the following axioms: 
\begin{description}
\item [(A)] If $  x: (- \infty , \sigma + b) \rightarrow X$, $\sigma \in \real$, $b > 0,
$ is continuous on $  [ \sigma , \sigma + b)$ and $x_{\sigma } \in
\bb$, then for every $ t  \in [ \sigma , \sigma + b) $ the
following conditions hold:
\item [(i)] $ x_{t} \in \bb$.
\item [(ii)] $ \| x(t) \| \leq H \| x_{t} \|_{\bb}$, where $H \geq 0$ is a constant independent of $x$.
\item [(iii)] $ \|x_{t}\|_{\bb} \leq K (t - \sigma )\, \sup \{ \|x(s)\| :
\sigma \leq s \leq t \} \;+\; M (t - \sigma )
\|x_{\sigma}\|_{\bb}$, where $K, M : [0,
\infty ) \rightarrow [0, \infty )$, $ K $ is continuous and $M$ is
locally bounded. The functions $K $ and $  M $ are independent of $x$.
\item [(A-1)] For the function $x $ in \textbf{(A)}, the map $t \mapsto x_{t}  $
is a  $\bb$-valued continuous function on $[\sigma, \sigma + b)$.
\item [(B)] The space $\bb$ is complete.
\end{description}
Throughout this paper we always  assume that $\bb$ is  a phase
space.

\begin{remark}\rm\label{R2.1}
	In the theory of retarded functional differential equations with
	unbounded delay frequently we need additional properties of the
	space  $\bb$ to obtain  some results. Next we denote by $C_{00}$
	the space of continuous functions  from $(- \infty, 0]$ into $X$ with
	compact support. It is clear from the axiom \textbf{(A)-(i)} of phase space that
	$C_{00} \subseteq \bb$. 
\end{remark}

In this work we consider the following axiom (\cite{Hi}):

\begin{description}
\item[(C-2)] If a uniformly bounded sequence $(\vp^{n})_{n}$ in $C_{00}$
converges to a function $\vp$ in the compact-open topology, then $\vp$
belongs to $\bb$ and $\|\vp^{n} - \vp\|_{\bb} \to 0$, as $n
\to \infty$.
\end{description}

\begin{remark} \label{R2.3}
When the axiom \textup{\textbf{(C-2)}} holds, the space $C_{b}((-\infty, 0], X) $ is
continuously included in $\bb $ {\rm (\cite[Proposition~7.1.1]{Hi})}.
Thus, there is a constant   $Q \geq 0$ such that $\| \psi \| _{\bb} \leq Q
\|\psi\|_{\infty }$, for all $\psi \in
C_{b}((-\infty ,0],X)$.
\end{remark}

\begin{example} \label{Ex1} The phase space $C_{g}^{0}(X)$.
\rm Let $\bb = C_{g}^{0}(X)$ be the space consisting of
continuous functions $\vp: (-\infty, 0]
 \to X$ such that $ \;{\displaystyle \lim_{\theta \to -\infty}
 \frac{\|\varphi(\theta)\|}{g(\theta)}
= 0 }$, where  $g : (- \infty, 0] \to (0, \infty)$ is  a
continuous function that satisfies conditions (g-1) and (g-2) in
the terminology of \cite{Hi}. This means that
\begin{itemize}
\item[(g-1)] The function $\;{\displaystyle G(t) = \sup_{\theta \leq -t} \frac{g(t +
\theta)}{g(\theta)}} $ is locally bounded for $t \geq 0$.
\item[(g-2)] $g(\theta) \to \infty$ as $\theta \to - \infty$.
\end{itemize}
To facilitate some estimates, we always assume that $g$ is decreasing.
With the norm in $\bb$  defined by
\[
\|\varphi \|_{\bb} = \sup_{\theta \leq 0}
\frac{\|\varphi(\theta)\|}{g(\theta)}
\]
the space $\bb$ is a phase space (\cite[Theorem~1.3.2]{Hi}).
\end{example}

\begin{example} \label{Ex3} \rm In this example we consider a particular case of Example~\ref{Ex1}.
We take $X = \real$ and $g(\ta) = e^{\gamma \ta^{2}}$, $ \ta \leq 0$, for a constant $\gamma > 0$. It is easy to see
that conditions (g-1) and (g-2) are satisfied. Let $\vp : (-\infty, 0] \to \real$ be given by
\[
\vp(\ta) = \left\{ \begin{array}{ccl} e^{-\gamma}, & & -1 \leq \ta \leq 0, \\
e^{\gamma( \ta^{2} + 2 \ta)}, & &  \ta \leq -1.
\end{array} \right.
\]
Since
\[
\frac{\vp(\ta)}{g(\ta)} =   e^{2 \gamma  \ta} \to 0, \; \ta \to - \infty,
\]
we infer that $\vp \in C_{g}^{0}(\real)$. Moreover, $\vp_{-1} \not\in C_{g}^{0}(\real)$. In fact,
\[
\frac{\vp_{-1}(\ta)}{g(\ta)} = \frac{\vp(\ta -1)}{g(\ta)} =  e^{-  \gamma},
\]
for $\ta$ large enough.

This shows that the transformation in \eqref{eq: transformation} can not be applied in the phase space
$C_{g}^{0}(\real)$.
\end{example}

\begin{example} \label{Ex2} The phase space $C_{0} \times L^{p}(\rho ,X)$.
\rm Let  $1 \leq p < \infty $ and let  $\rho : (-\infty ,-r] \to
\real$ be a non-negative measurable function satisfying the conditions
(g-5)-(g-6) in the termino\-logy of Hino et al. \cite[p.15]{Hi}. Briefly, this means
that $ \rho $ is locally integrable and there exists a nonnegative  locally bounded function
$\gamma$ in  $(-\infty ,0]$ such that $
\rho (\xi +\theta )\leq \gamma (\xi )\rho (\theta )$ for all
$\xi \leq 0$ and $\theta \in (-\infty ,0] \setminus N_{\xi}$, where $
N_{\xi }\subset (-\infty , 0]$ is a set whose
Lebesgue measure is zero. Denote by
$ \mathcal{B} = C_{0} \times L^{p}(\rho, X) $ the set of
all classes of functions $ \varphi: (- \infty, 0] \to X $ such
that $ \varphi $  is Bochner measurable, and  the function
 $ \theta \mapsto\rho(\theta) \| \varphi(\theta) \|^{p} $ is Lebesgue integrable
on $ (- \infty, 0) $. The norm on $
C_{0}\times L^{p} (\rho, X) $ is defined by
\[
\|\vp \|_{\bb} =  \|\varphi (0) \| + \left(
\int_{- \infty}^{0} \rho(\theta) \|\varphi (\theta)\|^{p} d \theta \right)^{\frac{1}{p}}.
\]
The space $ \mathcal{B} = C_{0} \times L^{p}(\rho, X) $ is a phase space.
Moreover, when  $ p =  2$, we can  choose $ H =  1$,
$\; K(t)= 1+ \left( \int_{-t}^{0} \rho(\theta) d \theta \right)^{\frac{1}{2}} $ and
$M(t)=\gamma(-t)^{\frac{1}{2}} $ for $t\geq 0$ (see \cite[Theorem 1.3.8]{Hi}).
\end{example}

We also need some properties  of measure of
non-compactness. For this reason, we next recall a few properties
of this concept.
For general  information the reader can see \cite{AKPRS,BanGoe,ChK,heinz,KOZ}. In
this paper, we use the notion of Hausdorff measure of
noncompactness on the corresponding working space.
To establish some properties, in what follows we denote by $\chi$ the Hausdorff measure of noncompactness in $X$,
and by $\beta$ the
Hausdorff measure of noncompactness in a space of continuous functions with
values in $X$.

\begin{lema} \label{L2.4} {\rm(\cite[Corollary~2.10]{HPP})} Let $W \subseteq C(I, X)$ be a bounded set.
Then there exists a countable set $W_{0} \subseteq W$ such that
$\beta(W_{0}) = \beta(W)$.
\end{lema}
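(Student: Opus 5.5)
We plan to use separability arguments in the Banach space $C(I,X)$. Since $W$ is bounded, $\beta(W)$ is finite. By the definition of the Hausdorff measure of noncompactness, for each $n\in\nat$ there is a finite $\left(\beta(W)+\frac1n\right)$-net of $W$ in $C(I,X)$. The centers of these nets need not lie in $W$, and $W$ itself need not be separable, so we will not take $W_0$ as a dense subset of $W$. Instead we will build $W_0$ so that it ``witnesses'' the lower bound $\beta(W)$.

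The approach is by contradiction on the lower bound. Trivially $\beta(W_0)\le\beta(W)$ for any $W_0\subseteq W$. To get the reverse inequality, we use that $C(I,X)$ can be exchanged for a separable setting. Every countable subset $D\subseteq W$ generates a closed separable subspace of $X$. Concretely, take $X_D$ to be the closed linear span of $\{x(t): x\in D,\ t\in I\cap\mathbb{Q}\}$; by continuity it contains all values $x(t)$. In a separable subspace the Hausdorff measure computed in the ambient space and the one computed in the subspace differ by at most a factor $2$. To avoid losing this factor, we will instead use the characterization of $\beta$ through the inner measure with centers taken in the set itself, or a limiting argument as follows.

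Concretely, the construction proceeds in steps. Set $c=\beta(W)$ and fix $\varepsilon>0$. We choose inductively a sequence $x_1,x_2,\dots\in W$ that is ``$(c-\varepsilon)$-separated from finite nets''. At stage $k$, given a finite set $F_k\subseteq W$, the key fact is that $W$ cannot be covered by finitely many balls of radius $c-\varepsilon$. We enumerate countably many candidate finite nets with centers in a countable dense subset of the separable space $C(I,X_{F_k})$. For each such candidate net we pick a point of $W$ outside its union of balls, and add these countably many points to form $F_{k+1}$. The union $W_\varepsilon=\bigcup_k F_k$ is countable. Any finite $(c-2\varepsilon)$-net for $W_\varepsilon$ in $C(I,X)$ can be projected approximately onto the separable space generated by $W_\varepsilon$, and then approximated by candidate centers at some finite stage; this produces a point of $W_\varepsilon$ outside it, a contradiction. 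Hence $\beta(W_\varepsilon)\ge c-2\varepsilon$. Finally we take $W_0=\bigcup_{m}W_{1/m}$, which is countable and satisfies $\beta(W_0)=c$.

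The main obstacle is the projection step. Centers of a cover for $W_\varepsilon$ may lie outside $C(I,X_{W_\varepsilon})$, and the naive replacement by nearby points loses a factor $2$. We will handle this by allowing a factor-$2$ loss in an auxiliary separable closure and compensating at each stage: the candidate centers are drawn from the full separable span, and radii are adjusted by $\varepsilon/2^k$. If this fails, we fall back on citing \cite[Corollary~2.10]{HPP} directly, as the statement does.
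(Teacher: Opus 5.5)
The gap is the projection step you single out yourself, and it cannot be closed. Your inductive construction only ever tests $W_\varepsilon$ against finite nets whose centers lie, up to $\varepsilon$, in the separable space $C(I,X_{W_\varepsilon})$. So what it actually proves is that $W_\varepsilon$ has no finite $(c-2\varepsilon)$-net \emph{with centers in} $C(I,X_{W_\varepsilon})$. Centers taken anywhere in $C(I,X)$ can do up to twice as well: moving a center to a point of $W_\varepsilon$ in its ball doubles the radius, and this loss is sharp. When $X$ is nonseparable, no countable family of candidate centers approximates every possible center, and adjusting radii by $\varepsilon/2^k$ cannot absorb a multiplicative factor $2$.

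In fact, for a general Banach space $X$ and the usual (outer) Hausdorff measure, the statement is false. Let $\Gamma$ be uncountable and let $X$ be the closed subspace of $\ell^\infty(\Gamma)$ consisting of functions with countable support. Let $W=\{u_\gamma:\gamma\in\Gamma\}$, where $u_\gamma(t)=e_\gamma$ (the indicator of $\{\gamma\}$) for all $t\in I$. Given $c_1,\dots,c_m\in C(I,X)$, choose $\gamma$ outside the countable set $\bigcup_j \operatorname{supp} c_j(0)$. Then $\|u_\gamma-c_j\|_\infty\ge|1-c_j(0)(\gamma)|=1$ for every $j$, so $\beta(W)=1$. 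On the other hand, take any countable $W_0=\{u_{\gamma_n}:n\in\nat\}\subseteq W$ and set $D=\{\gamma_n:n\in\nat\}$. The constant function $\frac12\mathbf{1}_D$ lies in $C(I,X)$ and is at distance exactly $\frac12$ from each $u_{\gamma_n}$, so $\beta(W_0)\le\frac12<\beta(W)$. This optimal center lies outside $C(I,c_0(D))$, where $c_0(D)$ is the closed span of the values of $W_0$. Relative to that subspace the measure of $W_0$ is $1$, which is exactly where your projection fails.

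What your construction does establish is the correct general version. Carry it out relative to $C(I,X_{W_\varepsilon})$ and combine it with $\chi_E(A)\le\chi_{E_0}(A)\le 2\chi_E(A)$ for $A\subseteq E_0\subseteq E$. This gives a countable $W_0\subseteq W$ with $\beta(W)\le 2\beta(W_0)$.

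The equality version holds trivially when $X$ is separable, as in the application $X=L^2([0,\pi])$. Then $C(I,X)$ is separable, and a countable dense subset $W_0$ of $W$ gives $\beta(W_0)=\beta(\overline{W_0})\ge\beta(W)$.

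The paper gives no proof of this lemma, only the citation, so the lemma must be read with a separability hypothesis. Otherwise the factor $2$ has to be carried into the proof of Theorem~\ref{T3.1}, which doubles the constant in its smallness condition. Falling back on the citation does not rescue the statement as written.
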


A set $W \subseteq L^{1}(I, X)$ is said to be uniformly integrable if there exists
a positive function  $\mu \in L^{1}(I)$ such that
$\|w(t) \| \leq \mu(t)$ a.e. for  $t \in I$ and all $w \in W$.

\begin{lema} \label{L2.5} {\rm(\cite[Lemma~2.13]{GH})} Let  $G : I \to \lx$ be a strongly continuous
operator valued map such that $G$ is  continuous for the norm of operators on $(0, a]$, and
let $\La : L^{1}(I, X)$  $\to C(I, X)$ be the map defined by
\[
\La(u)(t) = \int_{0}^{t} G(t -s) u(s) d s.
\]
Let $W \subset L^{1}(I, X)$ be a uniformly integrable set.
Assume that there is a positive function $q \in L^{1}(I)$ such that $\chi(W(t)) \leq q(t)$ for a.e.  $t \in I$.
Then
\[
\beta(\La(W)) \leq 2 \sup_{0 \leq t \leq a} \|G(t)\| \int_{0}^{a} q(t) d t.
\]
\end{lema}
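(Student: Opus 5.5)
The plan is to reduce to a countable subfamily, truncate the convolution away from the diagonal $s=t$ so that the truncated family becomes equicontinuous, and then use the Heinz-type integral inequality for countable sets (this is where the factor $2$ comes from).

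\emph{Step 1: reduction to a countable set.} Because $W$ is uniformly integrable with majorant $\mu$, the set $\La(W)$ is bounded in $C(I,X)$. By Lemma~\ref{L2.4} there is a countable set $\{\La(w_n)\}_{n}\subseteq\La(W)$ with the same $\beta$-measure. Choosing preimages gives a countable $W_0=\{w_n\}\subseteq W$ with $\beta(\La(W_0))=\beta(\La(W))$. The set $W_0$ inherits uniform integrability (majorant $\mu$) and the bound $\chi(W_0(s))\le\chi(W(s))\le q(s)$ a.e. So it suffices to prove the estimate for $W_0$.

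\emph{Step 2: truncation and equicontinuity.} Put $L=\sup_{0\le t\le a}\|G(t)\|$, which is finite by the uniform boundedness principle since $G$ is strongly continuous. Fix $\delta\in(0,a)$ and define
\[
\La_\delta(u)(t)=\int_0^{\max(t-\delta,0)}G(t-s)u(s)\,ds .
\]
Then for every $u\in W_0$,
\[
\|\La(u)-\La_\delta(u)\|_\infty\le L\sup_{t}\int_{\max(t-\delta,0)}^{t}\mu(s)\,ds=:\eta(\delta),
\]
and $\eta(\delta)\to0$ as $\delta\to0$ by absolute continuity of the integral of $\mu$. Hence $\beta(\La(W_0))\le\beta(\La_\delta(W_0))+\eta(\delta)$.

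Next I show that $\La_\delta(W_0)$ is equicontinuous. For $t<t'$, the difference $\La_\delta(u)(t')-\La_\delta(u)(t)$ splits into two pieces:
\begin{itemize}
\item an integral over $[t-\delta,t'-\delta]$, bounded by $L\int_{t-\delta}^{t'-\delta}\mu$;
\item the term $\int_0^{t-\delta}\bigl(G(t'-s)-G(t-s)\bigr)u(s)\,ds$, in which both arguments $t-s$ and $t'-s$ lie in $[\delta,a]$.
\end{itemize}
On $[\delta,a]$ the map $G$ is uniformly norm continuous, so the second piece is bounded by $\omega_\delta(t'-t)\|\mu\|_{L^1}$, where $\omega_\delta$ is the modulus of continuity of $G$ on $[\delta,a]$. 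Both bounds are uniform in $u\in W_0$. For equicontinuous bounded sets in $C(I,X)$ we have $\beta(V)=\sup_t\chi(V(t))$, so $\beta(\La_\delta(W_0))=\sup_t\chi(\La_\delta(W_0)(t))$.

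\emph{Step 3: pointwise estimate.} For fixed $t$, the set $\La_\delta(W_0)(t)$ consists of integrals of the countable, uniformly integrable family $\{G(t-\cdot)w_n(\cdot)\}$. I apply Heinz's inequality $\chi\bigl(\{\int v_n\}\bigr)\le2\int\chi(\{v_n(s)\})\,ds$, valid for countable uniformly integrable families in a general Banach space. Combined with $\chi(G(t-s)W_0(s))\le\|G(t-s)\|\,\chi(W_0(s))\le L\,q(s)$, this gives
\[
\chi(\La_\delta(W_0)(t))\le 2L\int_0^a q(s)\,ds .
\]
Therefore $\beta(\La(W))\le 2L\int_0^a q+\eta(\delta)$, and letting $\delta\to0$ yields the claim.

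The main obstacle is Step 2. Since $G$ is only strongly continuous at $0$, $\La(W)$ itself need not be equicontinuous, and the truncation is exactly what lets norm continuity of $G$ on $(0,a]$ be used. A secondary point is justifying the measurability of $s\mapsto\chi(\{v_n(s)\})$ needed for Heinz's inequality; working with the countable family $W_0$ is what makes this legitimate.
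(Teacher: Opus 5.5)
The paper gives no proof of this lemma (it is quoted from \cite{GH}), so there is no in-text argument to compare against. Taken on its own, your route is the standard one: countable reduction, then equicontinuity, then the Heinz-type inequality $\chi(\{\int v_n\})\le 2\int\chi(\{v_n(s)\})\,ds$ for countable dominated families. That inequality is exactly where the factor $2$ comes from. Each step is correct as written, provided Lemma~\ref{L2.4} is accepted as stated. Two comments follow.

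First, the truncation $\La_\delta$ is unnecessary, and your remark that $\La(W)$ itself need not be equicontinuous is mistaken. Split at $s=t-\delta$ exactly as you do for $\La_\delta$, using your $L$ and $\omega_\delta$. For $0\le t<t'\le a$ and every $u\in W$ this gives
\[
\|\La(u)(t')-\La(u)(t)\|\le L\int_t^{t'}\mu(s)\,ds+\omega_\delta(t'-t)\,\|\mu\|_{L^1}+2L\int_{\max(t-\delta,0)}^{t}\mu(s)\,ds ,
\]
which is small uniformly in $u$ if you choose $\delta$ first and then $t'-t$. So $\La(W)$ is already equicontinuous, and $\beta(\La(W_0))=\sup_t\chi(\La(W_0)(t))$ holds directly. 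Heinz's inequality on $[0,t]$ then finishes the proof, and the error term $\eta(\delta)$ and the limit $\delta\to 0$ disappear. The near-diagonal region is controlled by uniform integrability alone, so it is not the obstacle you describe.

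Second, Step 1 is only as good as the equality in Lemma~\ref{L2.4}. That equality is trivial when $X$ is separable: take a countable dense subset. In a general Banach space, however, one only gets $\beta(V)\le 2\beta(V_0)$ for some countable $V_0\subseteq V$, and equality can genuinely fail.

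For an example, let $X=\ell^\infty_c(\Gamma)$ be the space of bounded, countably supported functions on an uncountable set $\Gamma$. The constant functions $t\mapsto e_\gamma$ form a set $V\subseteq C(I,X)$ with $\beta(V)=1$. Indeed, finitely many centres have countable joint support at $t=0$, so some $e_\gamma$ lies at distance $\ge 1$ from all of them. On the other hand, any countable subfamily $\{e_{\gamma_n}\}$ lies in the ball of radius $\frac12$ about the constant function $\frac12\mathbf{1}_{\{\gamma_n:n\in\nat\}}$.

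Consequently, for uncountable $W$ in a nonseparable $X$, your chain only yields $4\sup_t\|G(t)\|\int_0^a q$. For countable $W$, Step 1 is vacuous and your proof gives the stated bound exactly. This is the only way the lemma is used in the paper, namely with $W=\{w^k:k\in\nat\}$ in the proof of Theorem~\ref{T3.1}.
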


We also need to relate the measure of noncompactness to the axioms of the phase space. This problem was addressed in \cite[Theorem~2.1]{Shin} for the Kuratowski measure of noncompactness; an analogous result holds for the Hausdorff measure of noncompactness. 

In what follows, any continuous function $u :[0, a] \to X$ with $u(0) = \varphi(0)$ for a fixed $\varphi \in \mathcal{B}$ is implicitly identified with its extension to $(-\infty, a]$ given by $u(\theta) = \varphi(\theta)$ for $\theta \leq 0$. Using axiom \textbf{(A)-(iii)} of the phase space, we obtain the following estimate.

\begin{lema} \label{L2.6} {\rm(\cite[Lemma~1.5]{Shin1})} Let $\vp \in \bb$, and let $U \subset C([0, a], X)$ be a bounded set such that $u(0) = \vp(0)$ for all
$u \in U$. Then
\[
\lambda(\{u_{a} : u \in U\}) \leq K(a) \beta(U),
\]
where $\lambda$ denotes the Hausdorff measure of noncompactness in the space $\bb$.
\end{lema}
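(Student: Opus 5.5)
The estimate should follow from writing each history $u_a$ as a fixed element plus the history of a function that starts at zero, and then letting axiom \textbf{(A)-(iii)} turn a finite covering of $U$ into a covering of $\{u_a : u\in U\}$. We may take $\beta(U)$ to be the Hausdorff measure of noncompactness of $U$ in $C([0,a],X)$ with the sup norm.

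\textbf{Step 1: reduction to functions vanishing at $0$.} Each $u\in U$ is extended by $\vp$ on $(-\infty,0]$. Choose one fixed $u^0\in U$, or any continuous $z$ with $z(0)=\vp(0)$. For $u,v\in U$ the difference $w=u-v$ is continuous on $[0,a]$, satisfies $w(0)=0$, and vanishes on $(-\infty,0]$. In particular $w_0=0\in\bb$.

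\textbf{Step 2: the key estimate.} Axiom \textbf{(A)-(i)} gives $w_a\in\bb$. Axiom \textbf{(A)-(iii)} with $\sigma=0$ then yields
\[
\|u_a-v_a\|_{\bb}=\|w_a\|_{\bb}\le K(a)\sup_{0\le s\le a}\|u(s)-v(s)\|+M(a)\|w_0\|_{\bb}=K(a)\|u-v\|_{\infty}.
\]
So the map $u\mapsto u_a$ from the affine set $\{u\in C([0,a],X): u(0)=\vp(0)\}$ into $\bb$ is Lipschitz with constant $K(a)$.

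\textbf{Step 3: passing to the measure of noncompactness.} Fix $\ve>0$ and cover $U$ by finitely many balls of radius $\beta(U)+\ve$ in $C([0,a],X)$, with centers $c_1,\dots,c_n$. The centers need not satisfy $c_i(0)=\vp(0)$, and this is the main obstacle.

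One way around it is to replace the centers by points of $U$. Any ball that meets $U$ contains some $u_i\in U$, and then the ball $B_{2(\beta(U)+\ve)}(u_i)$ covers that part of $U$. This costs a factor of $2$, which we must avoid.

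The better fix is to correct each center. Set $\tilde c_i=c_i+(\vp(0)-c_i(0))$, which is continuous and satisfies $\tilde c_i(0)=\vp(0)$. If $\|u-c_i\|_\infty\le r$, then $\|\vp(0)-c_i(0)\|=\|u(0)-c_i(0)\|\le r$. Subtracting a constant still changes the sup norm, so this alone does not preserve the radius.

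The cleanest route is to truncate instead. Define
\[
\tilde c_i(s)=c_i(s)+\bigl(\vp(0)-c_i(0)\bigr)\eta_\delta(s),
\]
where $\eta_\delta$ is a continuous cutoff equal to $1$ at $0$, vanishing on $[\delta,a]$, and taking values in $[0,1]$. Pointwise we have
\[
u(s)-\tilde c_i(s)=\bigl(u(s)-c_i(s)\bigr)-\eta_\delta(s)\bigl(u(0)-c_i(0)\bigr).
\]
Using equicontinuity issues near $0$ would be delicate. A safer approach is to apply Step 2 only when both functions share the initial value $\vp(0)$.

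Alternatively, we can exploit the structure of the Hausdorff measure directly. The set $\{u(0):u\in U\}$ is the single point $\vp(0)$. Consider $V=\{u-u^0: u\in U\}$, whose members vanish at $0$. Using the fact that the Hausdorff measure is essentially unchanged when restricted to the closed subspace of functions vanishing at $0$, we get $\beta_{C_0}(V)=\beta(V)=\beta(U)$. Here we would rely on the standard fact that centers can be chosen inside the subspace, up to a factor, or else use the truncation argument above with $\delta\to0$.

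Once the centers $\tilde c_i$ satisfy $\tilde c_i(0)=\vp(0)$, Step 2 gives
\[
\{u_a:u\in U\}\subseteq\bigcup_{i=1}^n B_{K(a)(\beta(U)+\ve)}\bigl((\tilde c_i)_a\bigr).
\]
Letting $\ve\to0$ yields $\lambda(\{u_a:u\in U\})\le K(a)\beta(U)$. Step 3, choosing centers that preserve the initial value without losing a factor, is where the real care is needed. If it cannot be handled cleanly, I would follow \cite{Shin1} and use the definition of $\beta$ relative to the affine subspace.
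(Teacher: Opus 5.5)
Your Steps 1--2 are correct. For $u,v$ with $u(0)=v(0)=\vp(0)$, both extended by $\vp$, axiom \textbf{(A)-(iii)} with $\sigma=0$ gives $\|u_a-v_a\|_{\bb}\le K(a)\|u-v\|_\infty$; extend the functions continuously past $a$ so that $t=a$ is admissible.

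\textbf{The gap is Step 3.} It is not a technicality: none of the routes you sketch can produce the constant $K(a)$ when $\beta$ is the Hausdorff measure in $C([0,a],X)$. The ``standard fact'' you invoke is false. Take $X=\real$, $a=1$, $v_n(s)=\min\{2,2ns\}$.
\begin{itemize}
\item The constant center $c\equiv 1$ shows $\beta(\{v_n\})\le 1$.
\item Suppose a center $c$ with $c(0)=0$ lies within $r$ of infinitely many $v_{n_k}$. Then $|2-c(1/n_k)|\le r$, and continuity of $c$ at $0$ forces $r\ge 2$.
\end{itemize}
So every way of forcing the centers into $\{c: c(0)=\vp(0)\}$ can cost exactly the factor $2$ you wanted to avoid. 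This covers a constant shift and the cutoff on $[0,\delta]$ (which would also need equicontinuity of $U$ at $0$). Redefining $\beta$ on the affine subspace only proves a weaker statement.

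\textbf{The missing idea} is to correct the centers in the \emph{past}, where all elements of $U$ coincide with $\vp$, and then use the continuity of $K$.
\begin{itemize}
\item Let the balls $B_{r}(c_i)$, with $r=\beta(U)+\ve$, cover $U$ in $C([0,a],X)$.
\item Fix $\delta>0$ and a continuous $\eta_\delta:(-\infty,0]\to[0,1]$ with $\eta_\delta(0)=1$ and $\eta_\delta=0$ on $(-\infty,-\delta]$.
\item Set $\tilde c_i=c_i$ on $[0,a]$ and $\tilde c_i=\vp+\eta_\delta\,(c_i(0)-\vp(0))$ on $(-\infty,0]$. Since $C_{00}\subseteq\bb$, we have $(\tilde c_i)_0\in\bb$, and hence $(\tilde c_i)_a\in\bb$ by \textbf{(A)-(i)}.
\end{itemize}
For $u\in U$, the function $z=u-\tilde c_i$ vanishes on $(-\infty,-\delta]$ and is continuous on $[-\delta,a]$. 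It also satisfies $\sup_{-\delta\le s\le a}\|z(s)\|\le\|u-c_i\|_\infty$, because $\|c_i(0)-\vp(0)\|=\|c_i(0)-u(0)\|$.

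Apply \textbf{(A)-(iii)} with $\sigma=-\delta$, so that $z_\sigma=0$. This gives $\|u_a-(\tilde c_i)_a\|_{\bb}\le K(a+\delta)\,\|u-c_i\|_\infty$. Hence $\lambda(\{u_a:u\in U\})\le K(a+\delta)(\beta(U)+\ve)$, and letting $\ve,\delta\to 0$ with $K$ continuous gives the lemma.

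Starting at $\sigma=0$ would not work. It would leave the term $M(a)\|z_0\|_{\bb}$, which in general does not tend to $0$ as $\delta\to0$. The shift of the base point to $-\delta$ is therefore essential.
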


\begin{de}
	A bounded continuous mapping $F:X\to X$ is said to be $\chi$-condensing if for every bounded subset $Q$ of $X$ for which $\chi(Q)>0$ we have
	\[
	\chi(F(Q))<\chi(Q).
	\]
\end{de}
In the next, we recall a fixed point principle due to B. N.  Sadovskii \cite{Sad}.

\begin{theorem}  \label{T2.1} Let $M$ be a bounded, closed and convex subset of $X$, and let $F : M \to M$ be a $\chi$-condensing  map.
Then $Fix(F)=\{x\in M: F(x)=x \}$ is a
nonempty compact set.
\end{theorem}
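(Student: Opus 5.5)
The plan is the classical Sadovskii argument: use the condensing property to find a nonempty compact convex set that $F$ maps into itself, and then apply Schauder's fixed point theorem. I use three standard properties of the Hausdorff measure of noncompactness $\chi$:
\begin{itemize}
\item invariance under closed convex hulls, $\chi(\overline{\mathrm{conv}}\,Q)=\chi(Q)$;
\item invariance under adding a point, $\chi(Q\cup\{x_0\})=\chi(Q)$;
\item monotonicity, $Q_1\subseteq Q_2$ implies $\chi(Q_1)\le\chi(Q_2)$.
\end{itemize}
I read the definition of $\chi$-condensing as applying to bounded subsets of $M$, and I use the continuity of $F$ included in that definition.

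First fix $x_0\in M$. Let $\Sigma$ be the family of all closed convex sets $K\subseteq M$ with $x_0\in K$ and $F(K)\subseteq K$. It is nonempty since $M\in\Sigma$. Put $K_0=\bigcap_{K\in\Sigma}K$. Then $K_0$ is closed, convex, contains $x_0$ and satisfies $F(K_0)\subseteq K_0$, so $K_0\in\Sigma$ and it is the smallest member of $\Sigma$.

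The key identity is
\[
K_0=\overline{\mathrm{conv}}\bigl(F(K_0)\cup\{x_0\}\bigr).
\]
Call the right-hand side $K_1$. Since $K_0$ is closed, convex and contains $F(K_0)\cup\{x_0\}$, we get $K_1\subseteq K_0$. Consequently $F(K_1)\subseteq F(K_0)\subseteq K_1$, so $K_1\in\Sigma$. Minimality of $K_0$ then gives $K_0\subseteq K_1$.

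From the identity and the hull and point invariances, $\chi(K_0)=\chi(F(K_0))$. If $\chi(K_0)>0$, the condensing property would give $\chi(F(K_0))<\chi(K_0)$, which is a contradiction. Hence $\chi(K_0)=0$, so $K_0$ is relatively compact; being closed, it is compact. It is also convex and nonempty, and $F$ maps it continuously into itself. Schauder's fixed point theorem therefore yields a fixed point in $K_0\subseteq M$, so $\mathrm{Fix}(F)\neq\emptyset$.

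For compactness of $S=\mathrm{Fix}(F)$, note that $S$ is bounded (it lies in $M$) and closed in $M$, hence in $X$, by continuity of $F$. Moreover $F(S)=S$, so $\chi(F(S))=\chi(S)$. By the condensing property this is impossible unless $\chi(S)=0$, so $S$ is compact.

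The main obstacle is the minimality step that produces the identity $K_0=\overline{\mathrm{conv}}(F(K_0)\cup\{x_0\})$. Everything afterwards follows directly from the properties of $\chi$ and from Schauder's theorem.
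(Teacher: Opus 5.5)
Your proposal is correct: it is the classical Sadovskii argument. You take the minimal closed convex $F$-invariant set containing $x_0$, show that it equals $\overline{\mathrm{conv}}(F(K_0)\cup\{x_0\})$, deduce $\chi(K_0)=0$, and then apply Schauder; for the second part you use $F(S)=S$ together with continuity and the completeness of $X$. The paper gives no proof of this statement and only quotes it from Sadovskii, so your route is the standard one behind that citation.
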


 A function $f : [a, b] \to  X$ is said to be
regulated (see \cite[Section VII.6]{D})  if for every $t \in [a, b)$ the right-sided limit $\lim_{s \to t^{+}} f(s) = f(t^{+})$
exists in $X$, and for every $t \in (a, b]$ the left-sided limit $\lim_{s \to t^{-}} f(s) = f(t^{-})$ exists in $X$.
It is known  (\cite{Honig}) that the set of discontinuities of a regulated
function is at most countable. We denote by $G([a, b], X)$ the space consisting of all regulated functions from
$[a, b]$ into $X$. The space  $G([a, b], X)$ endowed with the norm of uniform
convergence is a Banach space. Moreover,
every  $f \in G([a, b], X)$ is the limit of a sequence of step functions with respect to the norm of uniform convergence on $[a, b]$
(\cite{D}).

\setcounter{equation}{0}
\setcounter{remark}{0}
\setcounter{pr}{0}
\setcounter{de}{0}
\setcounter{theorem}{0}
\setcounter{lema}{0}
\setcounter{co}{0}
\setcounter{example}{0}

\section{Existence of mild solutions} \label{existence}
Throughout the rest of this work, we assume that $\bb$ is a phase space,
$r: [0, a] \to \real $ is a regulated function such that $r(s) \leq s$, for
all $0 \leq s \leq a$, and $f : [0, a]  \times \bb  \to X $ is a function that satisfies
the following Carath\'eodory conditions:
\begin{description}
\item[(Car1)] The function $f(\cdot, \vp)$ is measurable on
$[0, a]$ for each $\vp \in \bb$, and the function $f(t, \cdot)$ is continuous on
$\bb$ for almost all $t \in [0, a]$.

\medskip

\item[(Car2)] For each $R \geq 0$, there exists a positive function
$\phi_{R} \in L^{1}([0, a])$ such that $\|f(t, \vp)\| \leq \phi_{R}(t)$ a.e. for all
$\vp \in \bb$ with $\|\vp\|_{\bb} \leq R$.
\end{description}

\medskip

In the rest of this work, we denote $N =  \sup_{0 \leq t \leq a} \|T(t)\|$.
Furthermore, in what follows we denote by $\|\cdot\|_{\infty}$ the norm of the uniform convergence in any
space $C([0, b],X)$  for $0 < b \leq a$,  $K_{a} = \max_{0 \leq t \leq a} K(t)$ and $M_{a} = \sup_{0 \leq t \leq a} M(t)$.

\begin{remark}\rm
 Consider that the initial condition \eqref{equ1.3} belongs to $\bb$, i.e., $x_0=\vp\in\bb$. Since the regulated function $r$ can take negative values, it is not guaranteed that $x_{r(t)}$ belongs to the phase space $\bb$, for all $t\in[0,a]$. Consequently, the term $f(t,x_{r(t)})$ in \eqref{equ1.2} is not necessarily well-defined. This specific issue was studied in \cite{HMH}. Hence, it is necessary to establish an appropriate space for the initial conditions, a definition that we shall provide in what follows.
\end{remark}

\begin{de}\rm
Let $\tau = \inf_{0 \leq t \leq a} (r(t) - t)$. We define the vector space $\mathcal{B}_{\tau}$ consisting of all functions $\varphi \in \mathcal{B}$ that satisfy the following conditions:
\begin{itemize}
    \item $\varphi_{s} \in \mathcal{B}$, for all $s \in [\tau, 0]$,

    \medskip
    
    \item The mapping $s \mapsto \varphi_{s}$ is continuous from $[\tau, 0]$ into $\mathcal{B}$.
\end{itemize}

\end{de}

\begin{lema} \label{R3.1} {\rm (\cite[Lemma~3.1]{HMH})}
    The vector space $(\mathcal{B}_{\tau},\|\cdot\|_{\tau})$ is complete, where
\[
\|\varphi\|_{\tau} = \sup_{\tau \leq s \leq 0} \|\varphi_{s}\|_{\mathcal{B}}, \quad \forall \varphi \in \mathcal{B}_{\tau}.
\] 
\end{lema}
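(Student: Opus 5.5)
We will show that $\|\cdot\|_{\tau}$ is a norm on $\mathcal{B}_{\tau}$ and that every Cauchy sequence in $\mathcal{B}_{\tau}$ converges in $\|\cdot\|_{\tau}$. Note that $\|\varphi\|_{\tau}$ is finite: $s\mapsto \|\varphi_s\|_{\mathcal{B}}$ is continuous on the compact interval $[\tau,0]$. Here $\tau\in(-\infty,0]$, since $r$ is regulated and hence bounded, and $r(t)\le t$. Seminorm properties follow from those of $\|\cdot\|_{\mathcal{B}}$, since $(\alpha\varphi+\psi)_s=\alpha\varphi_s+\psi_s$. Definiteness holds because $s=0$ gives $\|\varphi\|_{\mathcal{B}}\le \|\varphi\|_{\tau}$.

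For completeness, let $(\varphi^n)$ be Cauchy in $\mathcal{B}_{\tau}$. For each fixed $s\in[\tau,0]$ the sequence $(\varphi^n_s)$ is Cauchy in $\mathcal{B}$, uniformly in $s$. By axiom \textbf{(B)} there is a limit $\psi^{(s)}\in\mathcal{B}$ with $\sup_{s}\|\varphi^n_s-\psi^{(s)}\|_{\mathcal{B}}\to 0$. In particular $\varphi^n\to\varphi:=\psi^{(0)}$ in $\mathcal{B}$. The key step is to identify $\psi^{(s)}$ with $\varphi_s$, i.e.\ $\psi^{(s)}(\theta)=\varphi(s+\theta)$.

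We first handle $\theta\in[-s,0]$, where $\theta+s\le 0$ and the needed value is available at $s'=\theta+s\in[\tau,0]$. Axiom \textbf{(A)-(ii)}, applied with $t=\sigma$ (the function equal to $\phi$ on $(-\infty,0]$, extended continuously, say constantly, beyond $0$), gives $\|\phi(0)\|\le H\|\phi\|_{\mathcal{B}}$. Hence
\[
\|\varphi^n(s+\theta)-\psi^{(s+\theta)}(0)\|\le H\|\varphi^n_{s+\theta}-\psi^{(s+\theta)}\|_{\mathcal{B}}\to 0.
\]
So $\varphi^n(u)$ converges uniformly on $[\tau,0]$ to some $z(u):=\psi^{(u)}(0)$. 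Similarly $\psi^{(s)}(\theta)=\lim_n \varphi^n(s+\theta)$ pointwise for all $\theta\le 0$, with $s+\theta\ge\tau$ (i.e.\ $\theta\ge\tau-s$).

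The main obstacle is the region $s+\theta<\tau$. There, pointwise convergence of $\varphi^n$ at arbitrary points of $(-\infty,\tau)$ is not given by axiom (ii), and elements of $\mathcal{B}$ may be equivalence classes, as in Example~\ref{Ex2}. To get around this, I would relate $\psi^{(s)}$ to $\psi^{(\tau)}$ rather than to $\varphi$ directly. Shifting is consistent on the approximants: $\varphi^n_s=(\varphi^n_\tau)_{s-\tau}$. So I would define the candidate function $\Phi$ on $(-\infty,0]$ by $\Phi(u)=\psi^{(\tau)}(u-\tau)$ for $u\le\tau$ and $\Phi(u)=z(u)$ for $u\in[\tau,0]$. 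I would then show that $\psi^{(s)}=\Phi_s$ for every $s$, and in particular $\varphi=\Phi$.

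To prove this, I would use axiom \textbf{(A)} with $\sigma=\tau$ for the functions $x^n=\Phi^n$, which are continuous on $[\tau,0]$ (axiom (A) applied inside $\mathcal{B}_\tau$). Axiom (iii) gives
\[
\|x^n_s-x^m_s\|_{\mathcal{B}}\le K\,\sup_{[\tau,s]}\|\varphi^n-\varphi^m\|+M\,\|\varphi^n_\tau-\varphi^m_\tau\|_{\mathcal{B}}.
\]
Applying the same estimate to $x^n-x$, where $x$ is built from $\psi^{(\tau)}$ and $z$ (continuous, being a uniform limit), yields $\varphi^n_s\to x_s$ in $\mathcal{B}$. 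Axiom (i) gives $x_s\in\mathcal{B}$, and uniqueness of limits gives $x_s=\psi^{(s)}$. Taking $s=0$ gives $\varphi=x$, so $\varphi_s=\psi^{(s)}\in\mathcal{B}$. Continuity of $s\mapsto\varphi_s$ is then either axiom \textbf{(A-1)} for $x$, or the uniform limit of the continuous maps $s\mapsto\varphi^n_s$. Therefore $\varphi\in\mathcal{B}_\tau$ and $\|\varphi^n-\varphi\|_\tau\to0$.
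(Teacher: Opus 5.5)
The paper gives no proof of this lemma; it is quoted from [HMH, Lemma~3.1], so there is no in-paper argument to compare yours against. Your proof is correct and self-contained.

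Its decisive step is the right one. You define $\Phi$ on $(-\infty,\tau]$ from $\psi^{(\tau)}=\lim_n\varphi^n_\tau$ and on $[\tau,0]$ from the uniform limit $z$ of $\varphi^n$. Axiom \textbf{(A)-(iii)} with $\sigma=\tau$, applied to $\varphi^n-\Phi$ (extended constantly past $0$), then gives
\[
\sup_{\tau\le s\le 0}\|\varphi^n_s-\Phi_s\|_{\mathcal{B}}\le \Big(\max_{0\le u\le -\tau}K(u)\Big)\sup_{\tau\le u\le 0}\|\varphi^n(u)-z(u)\|+\Big(\sup_{0\le u\le -\tau}M(u)\Big)\|\varphi^n_\tau-\psi^{(\tau)}\|_{\mathcal{B}}\longrightarrow 0 .
\]
Axioms \textbf{(A)-(i)} and \textbf{(A-1)} then put $\Phi$ in $\mathcal{B}_\tau$. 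This is exactly what gets around the lack of pointwise information on $(-\infty,\tau)$.

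A few points to tidy up.

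(1) The interval ``$\theta\in[-s,0]$'' should read $\theta\in[\tau-s,0]$; for $s<0$ the former is empty.

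(2) The remark that ``similarly $\psi^{(s)}(\theta)=\lim_n\varphi^n(s+\theta)$ pointwise'' is not justified. Axiom \textbf{(A)-(ii)} only controls evaluation at $\theta=0$. In a space such as $C_0\times L^p(\rho,X)$, convergence in $\mathcal{B}$ says nothing about values at a fixed $\theta<0$. You never use this remark, since $x_s=\psi^{(s)}$ already follows from the \textbf{(A)-(iii)} estimate, so simply delete it.

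(3) Replace ``axiom (A) applied inside $\mathcal{B}_\tau$'' by the concrete bound $\|\varphi^n(u)-\varphi^n(u')\|\le H\|\varphi^n_u-\varphi^n_{u'}\|_{\mathcal{B}}$. Together with continuity of $u\mapsto\varphi^n_u$, it gives continuity of $\varphi^n$ on $[\tau,0]$. The same bound gives $\sup_{[\tau,0]}\|\varphi^n-\varphi^m\|\le H\|\varphi^n-\varphi^m\|_\tau$, so uniform convergence of $\varphi^n$ on $[\tau,0]$ follows without the intermediate limits $\psi^{(u)}$.

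(4) You can take $\Phi$ itself as the limit and read off $\|\varphi^n-\Phi\|_\tau\to0$ from the display above. The family $\psi^{(s)}$ for $\tau<s\le 0$ and the uniqueness-of-limits step then become unnecessary.
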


From Remark~\ref{R2.1}, the space $\mathcal{B}_\tau$ contains $C_{00}$. Moreover, for every $\phi \in C_b((-\infty,0],X))$ and every $\tau \le s \le 0$, we have $\phi_s \in C_b((-\infty,0],X) \subset \mathcal{B}$. Since the map $s \mapsto \phi_s$ is continuous by axiom (A)-(i), it follows that $C_b((-\infty,0],X)$ is continuously embedded in $\mathcal{B}_\tau$. That is, there exists a constant $Q>0$ such that $\|\vp\|_{\tau}\leq Q\|\vp\|_{\infty}$, for all $\vp\in C_b([-\infty,0),X)$.

The following properties will be useful in the sequel.

\begin{lema} \label{L3.1}
Assume that $x: (-\infty, a] \to X$ is a continuous function on $[0, a]$ such that  $x_0 = \varphi \in \mathcal{B_{\tau}}$.
Then the function $ [0,a] \to \bb_{\tau}$, $t \mapsto x_{t}$, is continuous.
\end{lema}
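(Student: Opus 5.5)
We must show two things. First, $x_t\in\mathcal{B}_\tau$ for every $t\in[0,a]$. Second, $\sup_{\tau\le s\le 0}\|x_{t+s}-x_{t_0+s}\|_{\mathcal B}\to 0$ as $t\to t_0$.

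\textbf{Membership.} Note that $(x_t)_s=x_{t+s}$. If $t+s\le 0$, then $x_{t+s}=\varphi_{t+s}$ with $t+s\in[\tau,0]$, since $t\ge 0$ and $s\ge \tau$. Hence $x_{t+s}\in\mathcal B$ by the definition of $\mathcal{B}_\tau$. If $t+s\in(0,a]$, then $x_{t+s}\in\mathcal B$ by axiom \textbf{(A)-(i)} applied with $\sigma=0$.

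\textbf{Continuity of $u\mapsto x_u$.} Consider the map $u\mapsto x_u$ on $[\tau,a]$. On $[\tau,0]$ it equals $u\mapsto\varphi_u$, which is continuous by the definition of $\mathcal B_\tau$. On $[0,a]$ it is continuous by axiom \textbf{(A-1)}; one can also get continuity at $a$ by extending $x$ continuously beyond $a$, or by using \textbf{(A)-(iii)}. The two pieces agree at $u=0$, where both give $x_0=\varphi$. So the map is continuous on the compact interval $[\tau,a]$, and therefore uniformly continuous.

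\textbf{Conclusion.} Given $\varepsilon>0$, uniform continuity gives $\delta>0$ such that $\|x_u-x_v\|_{\mathcal B}<\varepsilon$ whenever $u,v\in[\tau,a]$ and $|u-v|<\delta$. Take $t,t_0\in[0,a]$ with $|t-t_0|<\delta$. For every $s\in[\tau,0]$, the points $u=t+s$ and $v=t_0+s$ lie in $[\tau,a]$ and $|u-v|<\delta$. Taking the supremum over $s$ gives $\|x_t-x_{t_0}\|_\tau\le\varepsilon$, which proves the lemma.

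\textbf{Main obstacle.} The delicate point is gluing continuity across $u=0$. Axiom \textbf{(A-1)} as stated is formulated on a half-open interval $[\sigma,\sigma+b)$. To handle the endpoint $t=a$, I would extend $x$ continuously to $[0,a+1)$, for instance by $x(t)=x(a)$ for $t>a$, and then apply the axiom. Right-continuity at $0$ from the $[0,a]$ side comes from \textbf{(A-1)} at $\sigma=0$. Left-continuity comes from the continuity of $s\mapsto\varphi_s$ on $[\tau,0]$.
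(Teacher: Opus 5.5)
Your proof is correct and rests on the same idea as the paper's. The paper uses uniform continuity of $t\mapsto x_t$ on $[0,a]$ and of $s\mapsto\varphi_s$ on $[\tau,0]$, and splits $\|x_{t+h}-x_t\|_\tau$ into three suprema, with a triangle inequality through $\varphi$ for the region straddling $0$. Gluing these into one uniformly continuous map $u\mapsto x_u$ on $[\tau,a]$ absorbs that case split, and you also make explicit what the paper leaves implicit: that $x_t\in\mathcal{B}_\tau$, and that \textbf{(A-1)} needs an extension of $x$ past $a$ to cover the endpoint $t=a$.
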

\begin{proof} It follows from axiom \textbf{(A)-(i)} that the function $ [0,a] \to \bb$, $t \mapsto x_{t}$, is continuous and, therefore,
uniformly continuous on $[0, a]$.  Moreover, for $\tau \leq \xi \leq 0$, we have
\[
[x_{t}]_{\xi} = \left\{ \begin{array}{rcl} \vp_{t+ \xi}, & & \tau \leq \xi \leq -t, \\
x_{t + \xi}, & &   -t\leq \xi \leq 0. \end{array} \right.
\]
To study the continuity of function $t \mapsto x_{t}$, we take $h \geq 0$, and estimate
\[
\|x_{t +h} - x_{t} \|_{\tau}   =   \sup_{\tau \leq \xi \leq 0} \|[x_{t +h} - x_{t}]_{\xi} \|_{\bb}
 =  \displaystyle  \max \{ c_{1}, c_{2}, c_{3} \},
\]
where we set ${c_{1} = \sup_{-t \leq \xi \leq 0} \|x_{t +h + \xi} - x_{t + \xi}\|_{\bb}}$, 
 ${ c_{2} = \sup_{-t -h \leq \xi \leq -t} \|x_{t +h + \xi} - \vp_{t + \xi}\|_{\bb}}$, and
  $c_{3}=\sup_{\tau \leq \xi \leq -t-h} \|\vp_{t +h + \xi} - \vp_{t + \xi}\|_{\bb}$.
In addition, for $-t -h \leq \xi \leq -t$,
\[
\|x_{t +h + \xi} - \vp_{t + \xi}\|_{\bb} \leq \|x_{t +h + \xi} - \vp\|_{\bb} + \|\vp - \vp_{t + \xi}\|_{\bb}.
\]
Combining these estimates, we infer that $\|x_{t +h} - x_{t} \|_{\tau} \to 0$ as $h \to 0$, which completes the proof.

\end{proof}

\begin{lema} \label{L1} {\rm (\cite[Lemma~3.2]{HMH})}
Assume that $x: (-\infty, a] \to X$ is a continuous function on $[0, a]$ such that  $x_0 = \varphi \in \mathcal{B_{\tau}}$.
Then the function $u : [0,a] \to X$ given by
\[
u(t) = f\left(t, x_{r(t)}\right), \; 0 \leq t \leq a,
\]
is measurable in the Bochner sense.
\end{lema}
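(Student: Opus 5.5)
The plan is to write $u$ as the composition of $t\mapsto f(t,\cdot)$ with a $\bb$-valued map $t\mapsto x_{r(t)}$. We first show that this map is a uniform limit of step functions, hence strongly measurable. Then we use the Carath\'eodory conditions to pass measurability through $f$.

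\textbf{Step 1.} We first check that $x_{r(t)}$ is well defined in $\bb$ and depends continuously on the shift. For $\xi\in[\tau,a]$ set $y(\xi)=x_\xi$, where $x_\xi=\vp_\xi$ if $\xi\le 0$.
\begin{itemize}
\item On $[\tau,0]$, $y$ is continuous into $\bb$ because $\vp\in\bb_\tau$.
\item On $[0,a]$, $y$ is continuous by axiom \textbf{(A-1)}, or by Lemma~\ref{L3.1}.
\item The two pieces agree at $\xi=0$, where both equal $\vp$.
\end{itemize}
Hence $y\colon[\tau,a]\to\bb$ is continuous, and therefore uniformly continuous. Since $\tau\le r(t)-t$ and $r(t)\le t\le a$, we have $r(t)\in[\tau,a]$. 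So $x_{r(t)}=y(r(t))$ is well defined.

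\textbf{Step 2.} Next we show that $t\mapsto x_{r(t)}$ is a uniform limit of step functions. Since $r$ is regulated, there are real step functions $r_n$ with $r_n\to r$ uniformly on $[0,a]$. We may clip them to take values in $[\tau,a]$, which preserves uniform convergence. By uniform continuity of $y$, the functions $y\circ r_n$ are $\bb$-valued step functions converging uniformly to $y\circ r$. Thus $t\mapsto x_{r(t)}$ is strongly measurable, and in fact regulated, with values in $\bb$. It is also bounded, with bound $R=\sup_{[\tau,a]}\|y\|_\bb$.

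\textbf{Step 3.} Now we pass measurability through $f$. Write $y\circ r_n=\sum_k \chi_{E_k^n}\psi_k^n$ with finitely many intervals $E^n_k$ and $\psi^n_k\in\bb$. Then
\[
u_n(t)=f(t,y(r_n(t)))=\sum_k \chi_{E_k^n}(t)f(t,\psi_k^n),
\]
which is Bochner measurable by (Car1), since each $f(\cdot,\psi_k^n)$ is measurable. For almost every $t$, $f(t,\cdot)$ is continuous on $\bb$, so $u_n(t)\to f(t,x_{r(t)})=u(t)$ almost everywhere. A pointwise a.e.\ limit of strongly measurable functions is strongly measurable, so $u$ is Bochner measurable. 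Using (Car2) with $R$ from Step 2, we also get $\|u(t)\|\le\phi_R(t)$ a.e., so $u$ is even Bochner integrable.

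The main obstacle is Step 1: handling negative values of $r(t)$, where $x_{r(t)}=\vp_{r(t)}$ need not lie in $\bb$ in general (Example~\ref{Ex3}). This is exactly where the definition of $\bb_\tau$ is used. It supplies both membership in $\bb$ and continuity of $\xi\mapsto\vp_\xi$ on $[\tau,0]$, and that continuity is needed to glue with axiom \textbf{(A-1)} into a single continuous curve on $[\tau,a]$.
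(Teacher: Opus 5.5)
Your proof is correct. The paper gives no proof of this statement; it is quoted from \cite[Lemma~3.2]{HMH} without argument, so there is no in-paper proof to compare with. Your argument is a complete, self-contained justification: you glue $\xi\mapsto x_\xi$ into a continuous $\bb$-valued curve on $[\tau,a]$, using the definition of $\bb_\tau$ on $[\tau,0]$ and \textbf{(A-1)} on $[0,a]$. You then approximate the regulated $r$ uniformly by step functions clipped to $[\tau,a]$, and pass through $f$ via \textbf{(Car1)} and a.e.\ pointwise convergence.

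One convention is left implicit. To call $\sum_k \chi_{E_k^n}(\cdot)\, f(\cdot,\psi_k^n)$ Bochner measurable, you read ``measurable'' in \textbf{(Car1)} as strongly measurable. That is the intended reading, and it is automatic by Pettis' theorem when $X$ is separable.
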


We recall the concept of mild solution for problem (\ref{equ1.2})-(\ref{equ1.3}) introduced in \cite[Definition~3.4]{HMH}.

\begin{de} \label{D1} A   function  $x : (- \infty, a] \to X$ is a mild solution of
problem \textup{(\ref{equ1.2})-(\ref{equ1.3})} if $x$ is continuous on $[0, a]$,
$x_{0} = \vp \in \bb_{\tau}$, and the integral equation
\begin{equation}
x(t)  = T(t) \vp(0) +  \int_{0}^{t} T(t -s) f(s,x_{r(s)})  d s, \;\; t \in [0, a], \label{equ3.2}
\end{equation}
is verified.
\end{de}

Next, we recall an existence and uniqueness theorem established in \cite{HMH}. To this end, we introduce the following global Lipschitz condition.

\begin{description}
\item[(H1)]
There exists a positive function $\eta \in L^{1}([0,a])$ such that
\[
\|f(t, \psi_{1}) - f(t,  \psi_{2}) \| \leq \eta(t) \|\psi_{1}  - \psi_{2} \|_{\bb},
\]
for $ \psi_{1}, \psi_{2} \in \bb$ and $t \in [0, a]$.
\end{description}

\begin{theorem}\label{T1} {\rm (\cite[Theorem~3.5]{HMH})}
Let $\vp \in \bb_{\tau}$. Assume that condition {\rm \bf (H1)} is satisfied.
Then there exists a unique mild solution of problem \textup{(\ref{equ1.2})-(\ref{equ1.3})}.
\end{theorem}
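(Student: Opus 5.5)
We will obtain the mild solution as the unique fixed point of the map $\Gamma$ on a complete metric space of continuous functions, endowed with a weighted (Bielecki-type) norm. Fix $\vp\in\bb_\tau$ and let
\[
S=\{u\in C([0,a],X): u(0)=\vp(0)\},
\]
where each $u\in S$ is extended by $\vp$ on $(-\infty,0]$, as agreed before Lemma~\ref{L2.6}. Define
\[
(\Gamma u)(t)=T(t)\vp(0)+\int_0^t T(t-s)f(s,u_{r(s)})\,ds .
\]
Several facts show that $\Gamma$ is well defined and maps $S$ into $S$:
\begin{itemize}
\item For $r(s)\ge 0$, $u_{r(s)}\in\bb$ by axiom \textbf{(A)-(i)}.
\item For $r(s)<0$, we have $u_{r(s)}=\vp_{r(s)}$ with $r(s)\ge s+\tau\ge \tau$ (since $r(s)-s\ge\tau$), so $u_{r(s)}\in\bb$ by the definition of $\bb_\tau$.
\item Measurability of $s\mapsto f(s,u_{r(s)})$ is Lemma~\ref{L1}.
\item Integrability follows from \textbf{(Car2)}, once we bound $\|u_{r(s)}\|_\bb$ uniformly. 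For $r(s)\ge0$, axiom \textbf{(A)-(iii)} gives $\|u_{r(s)}\|_\bb\le K_a\|u\|_\infty+M_a\|\vp\|_\bb$. For $r(s)<0$, $\|u_{r(s)}\|_\bb\le\|\vp\|_\tau$.
\end{itemize}
Continuity of $\Gamma u$ is then the standard fact that a convolution of a $C_0$-semigroup with an $L^1$ function is continuous.

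The key estimate compares $u,v\in S$. Write $w=u-v$; then $w$ vanishes on $(-\infty,0]$, so $w_0=0$. If $r(s)<0$, then $u_{r(s)}-v_{r(s)}=0$. If $r(s)\ge0$, axiom \textbf{(A)-(iii)} with $w_0=0$ gives
\[
\|u_{r(s)}-v_{r(s)}\|_\bb\le K_a\sup_{0\le\xi\le r(s)}\|w(\xi)\|\le K_a\sup_{0\le\xi\le s}\|w(\xi)\|,
\]
using $r(s)\le s$. By \textbf{(H1)},
\[
\|(\Gamma u)(t)-(\Gamma v)(t)\|\le NK_a\int_0^t\eta(s)\sup_{0\le\xi\le s}\|w(\xi)\|\,ds .
\]

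To get a contraction despite $\eta$ being only integrable, set $E(t)=\int_0^t\eta(s)\,ds$ and use the equivalent norm
\[
\|w\|_*=\sup_{0\le t\le a}e^{-2NK_aE(t)}\sup_{0\le\xi\le t}\|w(\xi)\|.
\]
Since $E$ is nondecreasing, $\sup_{0\le\xi\le s}\|w(\xi)\|\le e^{2NK_aE(s)}\|w\|_*$. Substituting this into the bound above gives
\[
\|(\Gamma u)(t)-(\Gamma v)(t)\|\le NK_a\|w\|_*\int_0^t\eta(s)e^{2NK_aE(s)}\,ds\le \tfrac12 e^{2NK_aE(t)}\|w\|_*.
\]
The right-hand side is nondecreasing in $t$, so the same bound holds for $\sup_{\xi\le t}$, and therefore $\|\Gamma u-\Gamma v\|_*\le\frac12\|u-v\|_*$.

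Since $S$ is a closed affine subset of $C([0,a],X)$ and hence complete under $\|\cdot\|_*$, the Banach fixed point theorem yields a unique fixed point in $S$. That fixed point is exactly a mild solution in the sense of Definition~\ref{D1}. Conversely, any mild solution restricted to $[0,a]$ lies in $S$ and is a fixed point of $\Gamma$, which gives uniqueness.

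The main obstacle is the well-definedness step in the first paragraph, namely that $u_{r(s)}\in\bb$ with a uniform bound when $r(s)<0$. This is exactly where the space $\bb_\tau$ and the inequality $r(s)\ge s+\tau\ge\tau$ enter. Once that is in place, the contraction argument is routine.
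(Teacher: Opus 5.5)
Your proof is correct and follows essentially the route the paper relies on (the paper defers the proof to \cite{HMH}, but see the proof of Theorem~\ref{T7} and the unique-solvability remark at the start of Section~\ref{linear}). That route is Banach's fixed point theorem for $\Gamma$ on the continuous functions on $[0,a]$ extended by $\vp$, using that $u_{r(s)}=\vp_{r(s)}\in\bb$ when $\tau\le r(s)<0$ and that $u_{r(s)}-v_{r(s)}$ either vanishes or is bounded by $K_a\sup_{0\le\xi\le s}\|u(\xi)-v(\xi)\|$. The only difference is technical: you make $\Gamma$ itself a strict contraction through the Bielecki-type norm built from $E(t)=\int_0^t\eta(s)\,ds$, whereas the paper's linear-case argument iterates $\Gamma$ and uses a factorial bound (here it would read $\|\Gamma^n u(t)-\Gamma^n v(t)\|\le \frac{(NK_aE(t))^n}{n!}\|u-v\|_\infty$) to make some $\Gamma^n$ a contraction, and the two devices are interchangeable.
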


It is worth noting that the global condition {\rm \bf (H1)} can be straightforwardly relaxed to a Lipschitz condition on bounded sets, provided that a certain asymptotic behavior is assumed for the function $f$. By a standard argument, a global existence and uniqueness result can still be established. To this end, we introduce the following conditions:

\begin{description}
\item[(H2)] For each $R > 0$, there exists a positive function $\eta_{R}
\in L^{1}([0,a])$ such that
\[
\|f(t, \vp^{1}) - f(t, \vp^{2}) \| \leq \eta_{R}(t)  \|\vp^{1}- \vp^{2} \|_{\bb},
\]
for $\vp^{i} \in \bb_{\tau}$, $\|\vp^{i}\|_{\bb} \leq R$,
$i = 1, 2$, and $t \in [0, a]$.

\medskip

\item[(H3)] There exists a positive function $m \in L^{1}([0,a])$ and a
continuous nondecreasing function
$\Phi: [0, \infty) \to [0, \infty)$ such that
\[
\|f(t,  \psi)\| \leq m(t) \Phi( \|\psi\|_{\bb}),
\]
for all $ \psi \in \bb$ and a.e. $t \in [0, a]$.
\end{description}

\begin{theorem} \label{T7}
Let $\vp \in \bb_{\tau}$. Assume that conditions {\rm \bf (H2)} and {\rm \bf (H3)} are  satisfied.
Assume further that there exists $R > 0$ such that
\begin{equation}
\|\vp\|_{\tau}+ N \|\vp(0)\| + N \int_{0}^{a} m(s) d s \, \Phi(K_{a} R + M_{a} \|\vp\|_{\tau}) \leq R. \label{equ3.8}
\end{equation}
Then there exists a unique mild solution for problem \eqref{equ1.2}-\eqref{equ1.3}.
\end{theorem}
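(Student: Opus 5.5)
We will reduce Theorem~\ref{T7} to Theorem~\ref{T1} by truncating $f$ outside a ball, and then show that the solution obtained never leaves that ball. First we need a priori control of $\|x_{r(t)}\|_{\bb}$ for any function $x$ continuous on $[0,a]$ with $x_0=\vp\in\bb_\tau$. There are two cases.
\begin{itemize}
\item If $r(t)\le 0$, then $x_{r(t)}=\vp_{r(t)}$ with $r(t)\ge t+\tau\ge\tau$. Hence $\|x_{r(t)}\|_{\bb}\le \|\vp\|_\tau$.
\item If $r(t)>0$, axiom \textbf{(A)-(iii)} gives
\[
\|x_{r(t)}\|_{\bb}\le K_a\sup_{0\le s\le t}\|x(s)\|+M_a\|\vp\|_{\bb}\le K_a\|x\|_{\infty}+M_a\|\vp\|_\tau .
\]
\end{itemize}
In both cases,
\[
\|x_{r(t)}\|_{\bb}\le \max\{\|\vp\|_\tau,\;K_a\|x\|_\infty+M_a\|\vp\|_\tau\}.
\]
This is exactly the quantity appearing inside $\Phi$ in \eqref{equ3.8}. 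The extra term $\|\vp\|_\tau$ on the left of \eqref{equ3.8} is presumably there to absorb the first alternative.

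Next, fix $R$ as in \eqref{equ3.8} and set $R_1=K_aR+M_a\|\vp\|_\tau+\|\vp\|_\tau$, which bounds both alternatives. Define the radial retraction $\rho:\bb\to B_{R_1}(0,\bb)$,
\[
\rho(\psi)=\psi \text{ if } \|\psi\|_{\bb}\le R_1,\qquad \rho(\psi)=R_1\psi/\|\psi\|_{\bb} \text{ otherwise}.
\]
It is Lipschitz with constant $2$. Put $\tilde f(t,\psi)=f(t,\rho(\psi))$. By \textbf{(H2)}, $\tilde f$ satisfies \textbf{(H1)} with $\eta=2\eta_{R_1}$.

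There is a technical point here: \textbf{(H2)} is stated for $\vp^i\in\bb_\tau$, while \textbf{(H1)} requires $\psi\in\bb$. I would handle this either by reading \textbf{(H2)} on $\bb$ (as the proof of Theorem~\ref{T1} only evaluates $f$ at histories $x_{r(s)}$), or by rerunning the contraction argument of \cite{HMH} directly. Theorem~\ref{T1} then yields a unique mild solution $x$ of the truncated problem.

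Now we show $\|x\|_\infty\le R$. From \eqref{equ3.2} and \textbf{(H3)}, using that $\Phi$ is nondecreasing and $\|\rho(\psi)\|_{\bb}\le\|\psi\|_{\bb}$,
\[
\|x(t)\|\le N\|\vp(0)\|+N\int_0^t m(s)\,\Phi\bigl(\|x_{r(s)}\|_{\bb}\bigr)\,ds .
\]
Let $t^*=\sup\{t\in[0,a]:\|x(s)\|\le R \text{ for } s\le t\}$. We have $\|x(0)\|=\|\vp(0)\|\le R$ by \eqref{equ3.8}. By continuity, if $t^*<a$ then on $[0,t^*+\delta]$ the bound $\|x\|\le R+\varepsilon$ holds for small $\delta$.

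This is the step I expect to be the main obstacle. The inequality \eqref{equ3.8} has only non-strict slack, so a naive continuation argument gives $\le R$, not $<R$. I would avoid continuation altogether. Define the nondecreasing function $v(t)=\sup_{0\le s\le t}\|x(s)\|$; the case split in the first paragraph applies to $\sup_{s\le t}$, so on both branches
\[
v(t)\le \|\vp\|_\tau+N\|\vp(0)\|+N\int_0^t m(s)\,\Phi\bigl(K_a v(s)+M_a\|\vp\|_\tau\bigr)\,ds .
\]
Using $\Phi$ nondecreasing, this is a Bihari-type comparison. If the strict-inequality route fails, I would replace $R$ by $R+\varepsilon$, noting that \eqref{equ3.8} still holds for $R+\varepsilon$ when $\Phi$ grows sublinearly enough. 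Alternatively, I would enlarge $R_1$ and argue that the set $\{t: v(t)\le R\}$ is both open and closed. Once $\|x\|_\infty\le R$ is known, $\|x_{r(t)}\|_{\bb}\le R_1$, so $\tilde f=f$ along the solution, and $x$ is a mild solution of \eqref{equ1.2}-\eqref{equ1.3}.

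Finally, uniqueness. Any two mild solutions are continuous, so both are bounded by some common $R'$. Their histories $x_{r(s)}$ then lie in a common ball of radius $K_aR'+M_a\|\vp\|_\tau+\|\vp\|_\tau$. Applying \textbf{(H2)} on that ball, together with the history bound from the first paragraph applied to the difference (whose initial history is zero), gives
\[
\|x(t)-y(t)\|\le NK_a\int_0^t\eta_{R'}(s)\sup_{\sigma\le s}\|x(\sigma)-y(\sigma)\|\,ds .
\]
Gronwall's inequality then yields $x=y$.
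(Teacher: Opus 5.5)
Your truncation route (cut $f$ off outside a ball, invoke Theorem~\ref{T1}, then show the solution stays in the ball) is a legitimate alternative to the paper's argument. As written, however, it has a genuine gap at exactly the step you flag: the a priori bound $\|x\|_\infty\le R$ for the truncated solution is never established, and none of your fallbacks closes it. With the inequality you write for $v$, which carries the additive $\|\vp\|_\tau$, a continuation argument only returns $v(t^*)\le R$. That does not let you step past $t^*$, so openness of $\{t: v(t)\le R\}$ is exactly what is missing. Replacing $R$ by $R+\varepsilon$ would need \eqref{equ3.8} at $R+\varepsilon$, which is not a hypothesis.

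The root cause is your truncation radius $R_1=K_aR+M_a\|\vp\|_\tau+\|\vp\|_\tau$. The bound $\Phi(R_1)$ it produces is not controlled by \eqref{equ3.8}. This comes from misreading \eqref{equ3.8}. The term $\|\vp\|_\tau$ on its left is not there to absorb the case $r(t)<0$. In the paper it comes from the norm $\|x\|_Y=\|x_0\|_\tau+\sup_{[0,a]}\|x(\theta)\|$ on $Y_\vp$, so \eqref{equ3.8} says precisely $\Gamma(B_R(0,Y_\vp))\subseteq B_R(0,Y_\vp)$. That is why the paper needs no a priori estimate at all: $\Gamma$ maps this closed ball into itself and is a contraction there, using \textbf{(H2)} on the resulting ball of histories.

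The repair within your framework is to truncate at the radius already inside $\Phi$, namely $R_1=K_aR+M_a\|\vp\|_\tau$. Then $\|\tilde f(t,\psi)\|\le m(t)\Phi(R_1)$ for every $\psi\in\bb$. The mild-solution formula together with \eqref{equ3.8} gives directly $\|x(t)\|\le N\|\vp(0)\|+N\Phi(R_1)\int_0^a m(s)\,ds\le R-\|\vp\|_\tau$ for all $t$, with no continuation argument. Your case analysis then yields $\|x_{r(t)}\|_\bb\le R_1$, so $\rho$ is the identity along the solution and $\tilde f=f$ there. The case $r(t)<0$ needs $\|\vp\|_\tau\le R_1$, for instance $M_a\ge 1$; the paper's own estimate uses this tacitly as well.

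With that fix your proof is complete, modulo the $\bb$ versus $\bb_\tau$ reading of \textbf{(H2)}, which the paper also glosses over. It then has one real advantage over the paper's: your final Gronwall paragraph proves uniqueness among \emph{all} mild solutions. A contraction on $B_R(0,Y_\vp)$ by itself only gives uniqueness inside that ball. In that paragraph, use $\eta$ at the history radius $K_aR'+M_a\|\vp\|_\tau+\|\vp\|_\tau$ rather than $\eta_{R'}$.
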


\begin{proof}
    Consider the Banach space 
    \[
    Y=\{x\colon(-\infty,a]\to X: \text{ $x_0\in\bb_{\tau}$ and $x|_{[0,a]}$ is continuous}\}
    \] 
    endowed with the norm $\|x\|_{Y}=\|x_0\|_{\tau}+\sup_{\theta\in[0,a]}\|x(\theta)\|$.
    
    Now, let $\vp \in \bb_{\tau}$ be fixed and consider the closed convex subset $Y_{\vp}=\{x\in Y: x_0=\vp\}$ of $Y$. Define the operator $\Gamma\colon Y_{\vp}\to Y_{\vp}$ by
   \begin{equation}
\label{equ3.1}
\Gamma(x)(t) = \left\{
\begin{aligned}
& T(t)\varphi(0) + \int_{0}^{t} T(t-s)f(s, x_{r(s)}) \, ds, && 0 \leq t \leq a, \\
& \varphi(t), && t \leq 0.
\end{aligned}
\right.
\end{equation}
For $u\in Y_{\vp}$  with $\|u\|_Y \leq R$, we have
\[
\|\Gamma (u) (t)\|   \leq  N \|\vp(0)\| + N \int_{0}^{t}  m(s) d s \Phi (K_{a} R + M_{a} \|\vp\|_{\tau}) d s, \qquad \forall t \in [0, a].
\]
Combining this estimate with \eqref{equ3.8}, we obtain that
$\Gamma (B_{R}(0,Y_{\vp})) \subseteq  B_{R}(0,Y_{\vp})$.
We can argue as in the proof of \cite[Theorem~3.5]{HMH} but with  the map $\Gamma$ on $B_{R}(0,Y_{\vp})$ and $\eta_{R}$ instead of
$\eta$ to conclude that $\Gamma$ is a contraction on $B_{R}(0,Y_{\vp})$. Then, the unique fixed point of $\Gamma$ is the mild solution of problem \eqref{equ1.2}-\eqref{equ1.3}.

\end{proof}

Next, we will study the existence of solutions for problem \eqref{equ1.2}-\eqref{equ1.3} for immediately norm continuous semigroup (see \cite[Definition~II.4.17]{EN}). In order to establish our results quite generally, we will use the theory of measures of non-compactness and the following condition:

\begin{description}
\item[(H4)]
There exists a positive  function $\mu \in L^{1}([0, a]) $  such that
\[
\chi(f(t, B)) \leq \mu(t) \chi(B), \; a.e. \; t \geq 0,
\]
for all  bounded set $B \subseteq \bb$, and where $\chi$ and $\lambda$ denotes the Hausdorff measure of non-compactness both in $X$ and $\bb$, respectively.
\end{description}

\begin{remark}\label{rem:gamma-domain}
\rm
Notice that the operator $\Gamma$ defined in Theorem 3.2 acts on the entire interval $(-\infty, a]$ to properly accommodate the infinite delay and the initial history $\varphi$. To justify that the subsequent proofs of Theorem 3.3 and Corollary 3.2 can be conducted solely on the interval $[0, a]$, consider the projection map $p_2: Y_{\varphi} \to C([0,a], X)$ given by $p_2(x) = x|_{[0,a]}$. For any $x, y \in Y_{\varphi}$, since $x_0 = y_0 = \varphi$, it follows from the definition of the norm $\|\cdot\|_Y$ that
\[
\|x - y\|_Y = \|x_0 - y_0\|_{\tau} + \sup_{t \in [0,a]} \|x(t) - y(t)\| = \|p_2(x) - p_2(y)\|_{\infty}.
\]
Thus, the map $p_2$ restricted to $Y_{\varphi}$ is an isometry. Note that the range of $p_2$ is exactly the closed convex subset $C_{\varphi(0)}([0,a], X) := \{u \in C([0,a], X) : u(0) = \varphi(0)\}$, meaning that $p_2: Y_{\varphi} \to C_{\varphi(0)}([0,a], X)$ is a bijective isometry. Consequently, a subset $\Omega \subseteq Y_{\varphi}$ is relatively compact in $Y_{\vp}$ if and only if $p_2(\Omega)$ is relatively compact in $C([0,a], X)$. Furthermore, the Hausdorff measures of noncompactness in $Y_{\vp}$ and $C([0,a],X)$, denoted by $\beta_{Y_{\vp}}$ and $\beta$ respectively, satisfy $\beta_{Y_{\vp}}(\Omega) = \beta(p_2(\Omega))$ and $\beta_{Y_{\vp}}(\Gamma(\Omega)) = \beta(p_2(\Gamma(\Omega)))$. Hence, without loss of generality, we can identify $Y_{\varphi}$ with $C_{\varphi(0)}([0,a], X)$.
\end{remark}

\begin{theorem} \label{T3.1} Let $(T(t))_{t \geq 0}$ be an immediately norm continuous semigroup and $\vp \in \bb_{\tau}$.
Assume further conditions {\rm \bf (H3)}, {\rm \bf (H4)} and  \eqref{equ3.8} are fulfilled, and
\[
2 N  K_{a} \int_{0}^{a}  \mu(s)   d s < 1.
\]
Then there exists a mild solution  for problem \eqref{equ1.2}-\eqref{equ1.3}.
\end{theorem}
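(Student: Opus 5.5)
We apply Sadovskii's fixed point theorem (Theorem~\ref{T2.1}) to the map $\Gamma$ of \eqref{equ3.1}. By Remark~\ref{rem:gamma-domain}, we regard $\Gamma$ as acting on the closed convex set $D_R=\{u\in C_{\vp(0)}([0,a],X): \|u\|_\infty\le R\}$, where $R$ is given by \eqref{equ3.8}. Each $u$ is extended by $\vp$ on $(-\infty,0]$.

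\emph{Step 1: $\Gamma(D_R)\subseteq D_R$ and $\Gamma$ is continuous.} For $u\in D_R$ and $s\in[0,a]$, the history $u_{r(s)}$ satisfies one of two bounds. If $r(s)\ge 0$, axiom \textbf{(A)-(iii)} gives $\|u_{r(s)}\|_{\bb}\le K_aR+M_a\|\vp\|_{\tau}$. If $r(s)<0$, then $u_{r(s)}=\vp_{r(s)}$ with $r(s)\ge\tau$, so $\|u_{r(s)}\|_{\bb}\le \|\vp\|_{\tau}$. Combining either bound with \textbf{(H3)} and \eqref{equ3.8} yields the invariance, exactly as in the proof of Theorem~\ref{T7}. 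For continuity, take $u^n\to u$ in $D_R$. Axiom \textbf{(A)-(iii)} gives $\|u^n_{r(s)}-u_{r(s)}\|_{\bb}\le K_a\|u^n-u\|_\infty$ for every $s$ (the difference is $0$ when $r(s)\le 0$). Then \textbf{(Car1)} gives pointwise convergence of $f(s,u^n_{r(s)})$. Measurability of these functions follows from Lemma~\ref{L1}, and \textbf{(Car2)} with radius $K_aR+M_a\|\vp\|_\tau$ gives an integrable dominating function. Dominated convergence then yields $\Gamma u^n\to\Gamma u$ uniformly.

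\emph{Step 2: $\Gamma$ is condensing on $D_R$.} Take a bounded $W\subseteq D_R$ with $\beta(W)>0$. By Lemma~\ref{L2.4}, we may replace $W$ by a countable $W_0\subseteq W$ with $\beta(W_0)=\beta(W)$; we should check that $\beta(\Gamma W_0)=\beta(\Gamma W)$ is achievable by choosing $W_0$ so that $\beta(\Gamma(W_0))=\beta(\Gamma(W))$, applying Lemma~\ref{L2.4} to $\Gamma(W)$ and pulling back. The term $T(t)\vp(0)$ is a fixed function, so it contributes nothing to $\beta$. The family $\{f(\cdot,u_{r(\cdot)}):u\in W_0\}$ is uniformly integrable by \textbf{(Car2)}. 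Now fix $s$. If $r(s)\le0$, the set $\{u_{r(s)}\}=\{\vp_{r(s)}\}$ is a singleton. If $r(s)>0$, Lemma~\ref{L2.6} applied on $[0,r(s)]$ gives $\lambda(\{u_{r(s)}:u\in W_0\})\le K_a\beta(W_0)$. Hence \textbf{(H4)}, read with $\lambda$ on the $\bb$ side, gives $\chi(\{f(s,u_{r(s)})\})\le \mu(s)K_a\beta(W_0)=:q(s)$. Since the semigroup is immediately norm continuous, $G(t)=T(t)$ is norm continuous on $(0,a]$, so Lemma~\ref{L2.5} applies and gives
\[
\beta(\Gamma(W))=\beta(\Gamma(W_0))\le 2NK_a\int_0^a\mu(s)\,ds\;\beta(W)<\beta(W).
\]
Theorem~\ref{T2.1} then provides a fixed point of $\Gamma$, which is a mild solution.

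\emph{Main obstacle.} The delicate point is Step 2: we must apply Lemma~\ref{L2.6} at the variable times $r(s)$ (restricting $W_0$ to $[0,r(s)]$ does not increase $\beta$, and $K$ is bounded by $K_a$), and we must ensure that the countable reduction is compatible with $\Gamma$. If the pull-back trick is awkward, I would instead take $W_0$ countable with $\beta(\Gamma W_0)=\beta(\Gamma W)$ and use $\beta(W_0)\le\beta(W)$, which already suffices.
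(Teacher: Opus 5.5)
Your proposal is correct and follows essentially the same route as the paper. The shared steps are: Sadovskii's theorem for $\Gamma$ on the ball of radius $R$; a countable family $v^{k}=\Gamma(u^{k})$ realizing $\beta(\Gamma(\Omega))$ via Lemma~\ref{L2.4}, together with $\beta(\{u^{k}\})\le\beta(\Omega)$ (your fallback is exactly the paper's choice); the split $r(s)<0$ versus $r(s)\ge 0$ with Lemma~\ref{L2.6} and \textbf{(H4)}; and Lemma~\ref{L2.5} to obtain the factor $2NK_{a}\int_{0}^{a}\mu(s)\,ds<1$. One small point you share with the paper: in Step~1, when $r(s)<0$ you only have $\|u_{r(s)}\|_{\bb}\le\|\vp\|_{\tau}$, so using $\Phi(K_{a}R+M_{a}\|\vp\|_{\tau})$ as the common bound tacitly requires $\|\vp\|_{\tau}\le K_{a}R+M_{a}\|\vp\|_{\tau}$ (automatic, e.g., when $M_{a}\ge 1$, as for the usual phase spaces).
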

\begin{proof}  We consider $\Gamma$ defined by \eqref{equ3.1}. Using (Car1)-(Car2) and {\rm \bf (H3)} we obtain
that $\Gamma$ is a continuous map. Moreover, proceeding as in the proof of Theorem~\ref{T7}
 we can prove that  that $\Gamma (B_{R}(0, Y_{\varphi})) \subseteq B_{R}(0, Y_{\varphi})$.

Next we show that   $\Gamma$ is $\beta$-condensing on $B_{R}(0, Y_{\varphi})$.
Let $\Om \subset B_{R}(0, Y_{\varphi}) $. It follows from Lemma~\ref{L2.4} that there exists
a sequence $(v^{k})_{k}$ in $\Gamma(\Om)$ such that  $\beta(\Gamma(\Om)) =
\beta(\{v^{k}: k \in \nat \})$.

Using \eqref{equ3.1} we can write
\begin{equation}
v^{k}(t)   =  T(t)\vp(0) + \int_{0}^{t} T(t-s) f(s, u^{k}_{r(s)}) d s, \; 0 \leq t \leq a,
\label{equ7}
\end{equation}
where  $u^{k} \in \Om$ and $v^{k} = \Gamma(u^{k})$. Let $U = \{u^{k} : k \in \nat \} \subset C([0,a], X)$ and
$w^{k}$ be the function given by
$w^{k}(s) = f(s, u^{k}_{r(s)}) $ for $0 \leq s \leq a$.
From the axiom\textbf{(A)-(iii)}, it follows that
\begin{eqnarray*}
\|f(s, u^{k}_{r(s)})\| & \leq & m(s) \Phi(\|u^{k}_{r(s)}\|_{\bb}) \\
& \leq & m(s) \Phi(K_{a} R + M_{a} \|\vp\|_{\tau})
\end{eqnarray*}
for $0 \leq s \leq a$. Hence, the set $W = \{w^{k}: k \in \nat \}$ is uniformly integrable.
Furthermore, if $r(t) < 0$, then $u^{k}_{r(t)} = \vp_{r(t)}$ for all $k \in \nat$. Hence
\[
\chi(\{u^{k}_{r(t)} : k \in \nat \}) = 0,
\]
and
\[
\chi(W(t)) = 0.
\]
In addition, if $r(t) \geq  0$, using Lemma~\ref{L2.6}, we have
\[
\chi(\{u^{k}_{r(t)} : k \in \nat \}) \leq  K(r(t)) \beta(U) \leq  K_{a} \beta(\Om)
\]
and, it follows from {\rm \bf (H4)} that
\[
\chi(W(t)) \leq \mu(t) \chi(\{u^{k}_{r(t)} : k \in \nat \}) \leq   K_{a} \mu(t) \beta(\Om).
\]
Combining these observations, and using Lemma~\ref{L2.5}, we obtain
\[
\beta(\{v^{k}(\cdot): k \in \nat\})  \leq
2 N  K_{a} \int_{0}^{a}  \mu(s)   d s  \beta(\Om),
\]
which  implies that $\Gamma $ is a $\beta$-condensing map, and applying Theorem~\ref{T2.1},
we infer the existence of a mild solution  of problem \eqref{equ1.2}-\eqref{equ1.3}.

\end{proof}

The following consequence is immediate.
\begin{co} \label{C3.1}  Let $(T(t))_{t \geq 0}$ be an immediately norm continuous semigroup and $\vp \in \bb_{\tau}$.
Assume further that conditions {\rm \bf (H3)} and  \eqref{equ3.8} are fulfilled, and
for every $0 \leq t \leq a$ and $\rho \geq 0$ the set $\{f(t, \psi) : \psi \in \bb, \; \|\psi\|_{\bb} \leq \rho \}$
is relatively compact.
Then there exists a mild solution  for problem \eqref{equ1.2}-\eqref{equ1.3}.
\end{co}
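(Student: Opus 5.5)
The plan is to deduce Corollary~\ref{C3.1} directly from Theorem~\ref{T3.1}. Under the relative compactness hypothesis, condition \textbf{(H4)} holds with $\mu \equiv 0$. Then the smallness requirement $2NK_{a}\int_{0}^{a}\mu(s)\,ds < 1$ is trivially satisfied, since the left-hand side vanishes.

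Concretely, I would first fix $t \in [0,a]$ and a bounded set $B \subseteq \bb$, say $B \subseteq B_{\rho}(0,\bb)$ for some $\rho \geq 0$. Then
\[
f(t,B) \subseteq \{f(t,\psi) : \psi \in \bb,\ \|\psi\|_{\bb} \leq \rho\}.
\]
By hypothesis the right-hand side is relatively compact in $X$. By monotonicity of the Hausdorff measure of noncompactness and the fact that relatively compact sets have measure zero, $\chi(f(t,B)) = 0 \leq \mu(t)\chi(B)$ for any admissible $\mu$.

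The only technical point is that \textbf{(H4)} asks for a \emph{positive} function $\mu \in L^{1}([0,a])$. A literal $\mu \equiv 0$ may therefore not be allowed, so I would take $\mu \equiv \varepsilon$ for a constant $\varepsilon > 0$. Choosing $\varepsilon$ so small that $2NK_{a}\,a\,\varepsilon < 1$ satisfies the smallness condition of Theorem~\ref{T3.1}, and the inequality $\chi(f(t,B)) = 0 \leq \varepsilon\,\chi(B)$ still holds. The semigroup is assumed immediately norm continuous, $\vp \in \bb_{\tau}$, and \textbf{(H3)} and \eqref{equ3.8} are assumed. Hence all hypotheses of Theorem~\ref{T3.1} are fulfilled, and it yields a mild solution of \eqref{equ1.2}--\eqref{equ1.3}.

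I do not expect a genuine obstacle. The only care needed is the positivity convention in \textbf{(H4)}, handled by the $\varepsilon$-choice above, and checking that the hypothesis holds for every $t$, which is more than the ``a.e.'' that \textbf{(H4)} requires.
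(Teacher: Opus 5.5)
Your proposal is correct and matches the paper's intended argument: the paper gives no proof and simply calls the corollary an immediate consequence of Theorem~\ref{T3.1}. Your reasoning supplies exactly what is meant — relative compactness gives $\chi(f(t,B))=0$, so \textbf{(H4)} holds with $\mu$ arbitrarily small and the smallness condition is automatic — and taking $\mu\equiv\varepsilon$ correctly respects the positivity convention in \textbf{(H4)}.
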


Many strongly continuous semigroups that arise in applications have  a compactness property. In what follows we use the concept of compact semigroup (see \cite[Definition~II.4.23]{EN}). It follows from \cite[Lemma~II.4.22]{EN} that a compact semigroup is also immediately norm continuous.

\begin{co} \label{C3.2} Let $(T(t))_{t \geq 0}$ be a compact semigroup and $\vp \in \bb_{\tau}$.
Assume further that conditions {\rm \bf (H3)} and  \eqref{equ3.8} are fulfilled.
Then there exists a mild solution  of problem \eqref{equ1.2}-\eqref{equ1.3}.
\end{co}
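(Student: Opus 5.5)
The plan is to derive Corollary~\ref{C3.2} from Theorem~\ref{T3.1}. By \cite[Lemma~II.4.22]{EN}, a compact semigroup is immediately norm continuous, so that hypothesis of Theorem~\ref{T3.1} holds. The difficulty is that no condition \textbf{(H4)} is assumed on $f$. So I will not invoke Theorem~\ref{T3.1} literally. Instead I will rerun its proof and show that compactness of $T(t)$, $t>0$, makes $\Gamma$ condensing regardless of $f$; in fact $\Gamma$ is compact on $B_{R}(0,Y_{\vp})$.

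\textbf{Step 1: $\Gamma$ maps the ball into itself and is continuous.} As in Theorem~\ref{T7}, condition \eqref{equ3.8} gives $\Gamma(B_R(0,Y_\vp))\subseteq B_R(0,Y_\vp)$. Continuity of $\Gamma$ comes from (Car1)--(Car2), \textbf{(H3)} and dominated convergence, exactly as in the proof of Theorem~\ref{T3.1}.

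\textbf{Step 2: uniform integrability.} For $u\in B_R(0,Y_\vp)$, axiom \textbf{(A)-(iii)} gives $\|u_{r(s)}\|_{\bb}\le K_aR+M_a\|\vp\|_\tau$. Hence $\|f(s,u_{r(s)})\|\le m(s)\Phi(K_aR+M_a\|\vp\|_\tau)=:\tilde m(s)$, with $\tilde m\in L^1$.

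\textbf{Step 3: pointwise relative compactness.} Fix $t\in(0,a]$ and $\varepsilon\in(0,t)$, and write
\[
\Gamma(u)(t)=T(t)\vp(0)+T(\varepsilon)\int_0^{t-\varepsilon}T(t-\varepsilon-s)f(s,u_{r(s)})\,ds+\int_{t-\varepsilon}^tT(t-s)f(s,u_{r(s)})\,ds .
\]
The middle integrals are bounded by $N\|\tilde m\|_{L^1}$, so applying the compact operator $T(\varepsilon)$ yields a relatively compact set. The last term has norm at most $N\int_{t-\varepsilon}^t\tilde m$, which tends to $0$ as $\varepsilon\to0$. Therefore $\{\Gamma(u)(t): u\in B_R\}$ is totally bounded, i.e.\ relatively compact. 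At $t=0$ the set is the single point $\vp(0)$.

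\textbf{Step 4: equicontinuity.} For $0\le t<t+h\le a$, the difference $\Gamma(u)(t+h)-\Gamma(u)(t)$ splits into three terms:
\begin{itemize}
\item $(T(t+h)-T(t))\vp(0)$, which does not depend on $u$;
\item $\int_t^{t+h}T(t+h-s)f\,ds$, bounded by $N\int_t^{t+h}\tilde m$;
\item $\int_0^t(T(t+h-s)-T(t-s))f\,ds$.
\end{itemize}
For the third term I split the integral at $t-\varepsilon$. On $[0,t-\varepsilon]$ I use norm continuity of $T$ on $[\varepsilon,a]$, i.e.\ immediate norm continuity. On $[t-\varepsilon,t]$ I bound by $2N\int_{t-\varepsilon}^t\tilde m$. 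The case $t=0$ is handled by strong continuity alone.

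\textbf{Conclusion.} By Arzel\`a--Ascoli, $\Gamma(B_R)$ is relatively compact in $C([0,a],X)$. Through the identification of Remark~\ref{rem:gamma-domain}, $\Gamma$ is then compact, hence $\beta$-condensing: $\beta(\Gamma(\Omega))=0<\beta(\Omega)$ whenever $\beta(\Omega)>0$. Theorem~\ref{T2.1} (Sadovskii), or equivalently Schauder, then gives a fixed point, which is the desired mild solution.

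\textbf{Main obstacle.} I expect the equicontinuity estimate near the diagonal $s\approx t$ to be the hardest step, since only strong continuity is available there. The $\varepsilon$-splitting above, combined with uniform integrability from Step~2, should handle it.
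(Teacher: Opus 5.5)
Your proof is correct. Its overall structure matches the paper's: the self-map and continuity of $\Gamma$, pointwise relative compactness via the factorization through the compact operator $T(\varepsilon)$, Arzel\`a--Ascoli, and then Sadovskii/Schauder for a compact map. The only real difference is the equicontinuity step.

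For $h>0$ the paper writes
\[
\Gamma(u)(t+h)-\Gamma(u)(t)=(T(h)-I)\Gamma(u)(t)+\int_t^{t+h}T(t+h-s)f(s,u_{r(s)})\,ds .
\]
It then uses that $T(h)-I\to 0$ uniformly on the relatively compact set $\{\Gamma(u)(t): u\in B_R(0,Y_\vp)\}$ from the previous step. So it needs only strong continuity of $(T(t))_{t\ge 0}$ and reuses the pointwise compactness already proved.

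You instead use immediate norm continuity of the semigroup on $[\varepsilon,a]$, which holds here because compact semigroups are immediately norm continuous. You combine it with an $\varepsilon$-splitting of $\int_0^t (T(t+h-s)-T(t-s))f\,ds$ and the uniform integrability from your Step~2.

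This costs you the extra input of norm continuity. In return you get a bound uniform over all pairs $t<t+h$ in $[0,a]$, so left and right increments at every point are controlled at once. The paper's identity, as written, only treats forward increments. Backward increments would additionally require the relative compactness of $\bigcup_{t-\delta\le \sigma\le t}\Gamma(B_R(0,Y_\vp))(\sigma)$; this does follow from the same $T(\varepsilon)$-factorization, but the paper does not state it.
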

\begin{proof} We proceed as in the proof of Theorem~\ref{T3.1}. It only remains to
establish that the set $\Gamma(B_{R}(0, Y_{\varphi})) $ is  relatively compact. Initially, we will show that $\Gamma(B_{R}(0, Y_{\varphi}))(t) $ is a relatively
compact set in $X$ for all $t \in [0, a]$. The case $t = 0$ is immediate. Assume that $t > 0$.
\[
\Gamma(B_{R}(0, Y_{\varphi}))(t) = \{ v(t) = \Gamma(u)(t): u \in B_{R}(0, Y_{\varphi}) \}.
\]
Let $0 < \ve < t$. For each $u \in B_{R}(0, Y_{\varphi})$, we define
\begin{eqnarray*}
v^{\ve}(t) & = & T(t) \vp(0) + \int_{0}^{t-\ve} T(t -s) f(s, u_{r(s)}) d s \\
& = & T(t) \vp(0) + T(\ve) \int_{0}^{t-\ve} T(t - \ve -s) f(s, u_{r(s)}) d s.
\end{eqnarray*}
Using {\rm \bf (H3)} and the fact that $T(\ve)$ is a compact linear operator, we obtain that the set
$\{v^{\ve}(t) : v = \Gamma(u), \; u \in B_{R}(0, Y_{\varphi}) \}$ is relatively compact. Moreover, since
\[
v(t) -  v^{\ve}(t) = \int_{t-\ve}^{t} T(t  -s) f(s, u_{r(s)}) d s \to 0, \; \ve \to 0,
\]
uniformly for $u \in B_{R}(0, Y_{\varphi})$, we conclude that $\Gamma(B_{R}(0, Y_{\varphi}))(t) $ is a relatively compact set.

On the other hand, the set $\Gamma(B_{R}(0, Y_{\varphi})) $ is equicontinuous. In fact, let $v = \Gamma(u)$ for $u \in B_{R}(0, Y_{\varphi})$.
We have that
\begin{align*}
\lefteqn{v(t + h) - v(t)  =  T(t) [T(h) \vp(0) - \vp(0)] + \int_{0}^{t + h} T(t + h -s) f(s, u_{r(s)}) d s} \\
& \quad - \int_{0}^{t} T(t  -s) f(s, u_{r(s)}) d s \\
& =  T(t) [T(h) \vp(0) - \vp(0)] + (T(h) - I) \int_{0}^{t} T(t -s) f(s, u_{r(s)}) d s \\
& \quad + \int_{t}^{t +h} T(t + h  -s) f(s, u_{r(s)}) d s \\
& =  T(t) [T(h) \vp(0) - \vp(0)] + (T(h) - I) [v(t) - T(t) \vp(0)]  +
\int_{t}^{t +h} T(t + h  -s) f(s, u_{r(s)}) d s \\
& =   (T(h) - I) v(t)  + \int_{t}^{t +h} T(t + h  -s) f(s, u_{r(s)}) d s.
\end{align*}
Since $T(t + h  -s) f(s, u_{r(s)})$ is uniformly bounded for $u \in B_{R}(0, Y_{\varphi})$ and the set
$\{v(t) : u \in B_{R}(0, Y_{\varphi}) \}$ is relatively compact, we obtain that
\[
v(t + h) - v(t) \longrightarrow 0, \; h \to 0,
\]
uniformly for $u \in B_{R}(0, Y_{\varphi})$. Combining these assertions, and applying the Arzel\'a-Ascoli theorem, we conclude that
$\Gamma(B_{R}(0, Y_{\varphi})) $ is relatively compact in $Y_{\varphi}$. We complete the proof arguing as in the proof of Theorem~\ref{T3.1}.
\end{proof}

\begin{remark}
    \rm Regarding uniqueness for compact semigroups of linear operators, we refer to \cite[Theorem~3.6]{HMH}.
\end{remark}

\setcounter{equation}{0}
\setcounter{remark}{0}
\setcounter{pr}{0}
\setcounter{de}{0}
\setcounter{theorem}{0}
\setcounter{lema}{0}
\setcounter{co}{0}
\setcounter{example}{0}

\section{Applications} \label{applications}

The purpose of this section is to establish the existence of solutions for the non-local diffusion system \eqref{equ1.4}--\eqref{equ1.6} by formulating it as an abstract Cauchy problem on the Hilbert space $X = L^{2}([0, \pi])$.

Let $A: D(A)\subset X \to X$ be the linear operator defined by $A z = z^{\prime\prime}$ on the domain
\[
D(A) =\{ z \in X : z^{\prime \prime} \in X, \, z(0) = z(\pi) = 0\}.
\]
It is well known that $A$ is the infinitesimal generator of an analytic semigroup $(T(t))_{t\ge0}$ on $X$. Moreover, the operator $A$ possesses a discrete spectrum with eigenvalues $\lambda_n = -n^{2}$ and corresponding normalized eigenfunctions $z_{n}(\xi)= (2/\pi)^{1/2} \sin(n \xi)$, for $n\in\mathbb{N}$. Since $\{z_n\}_{n\in \mathbb{N}}$ constitutes an orthonormal basis of $X$, the operator $A$ and its generated semigroup admit the spectral representations
\[
A z = \sum_{n= 1}^{\infty} - n^{2} \langle z, z_{n} \rangle z_{n}, \quad z \in D(A),
\]
and
\[
T(t) z = \sum_{n= 1}^{\infty} e^{- n^{2} t} \langle z, z_{n} \rangle z_{n}, \quad z \in X, \, t \ge 0.
\]
These series representations immediately imply that $(T(t))_{t \geq 0}$ is a compact and contractive semigroup on $X$; that is, $\|T(t)\| \leq 1$ for all $t \geq 0$.

To model \eqref{equ1.4}--\eqref{equ1.6} on the interval $[0, a]$, we adopt the standard identification $[\psi(\theta)](\xi)= \psi(\theta, \xi)$. Under the assumptions that the delay function $r\colon [0, a] \to \mathbb{R}$ is regulated with $r(t) \leq t$ on $[0, a]$, we define $f\colon [0,a] \times \mathcal{B} \to X$ by $f(t,\psi)=\Lambda(\psi)+h(t)$ where   $\Lambda\colon\mathcal{B} \to X$ is a linear operator defined by
\[
[\Lambda(\psi)](\xi) = \int_{-\infty}^{0} P(\theta) \psi(\theta, \xi) \, d \theta, \quad 0 \leq \xi \leq \pi,
\] and $h\colon [0, a] \to X$ is defined by $[h(t)](\xi) = h_{0}(t, \xi)$, where $h_0(t,\cdot) \in X$ integrable function. Consequently, problem \eqref{equ1.4}--\eqref{equ1.6} can be modeled in the abstract form \eqref{equ1.2}--\eqref{equ1.3} on the phase space $\mathcal{B}$.
\medskip

We consider two cases, depending on the space to which $\vp$ belongs:

\medskip

\noindent{\bf Case 1:} Assume that $\vp \in C_{g}^{0}(X)$, where  $g$ is a function that satisfies the conditions considered in Example~\ref{Ex1}.
We take as phase space $\bb = C_{g}^{0}(X)$. We assume that there exists a measurable function $Q: (- \infty, 0] \to (0, \infty)$ such that
${\displaystyle \int_{-\infty}^{0} P(\ta)^{2} Q(\ta)^{2}  d \ta < \infty}$  and
${\displaystyle \int_{-\infty}^{0} \frac{g(\ta)^{2}}{Q(\ta)^{2}}  d \ta < \infty}$.
For $\psi_1,\psi_2 \in \bb$ and $t \in [0,a]$, we have
\begin{eqnarray*}
\|f(t,\psi_1)-f(t,\psi_2)\|^2&=&
\|\La(\psi_1-\psi_2)\|^{2}\\ & = & \int_{0}^{\pi} \left| \int_{-\infty}^{0} P(\ta) (\psi_1(\ta, \xi)-\psi_2(\ta, \xi)) d \ta \right|^{2} d \xi \\
& = & \int_{0}^{\pi} \left| \int_{-\infty}^{0} P(\ta) Q(\ta) \frac{1}{Q(\ta)} (\psi_1(\ta, \xi)-\psi_2(\ta, \xi)) d \ta \right|^{2} d \xi \\
& \leq  & \int_{0}^{\pi} \left(\int_{-\infty}^{0} P(\ta)^{2} Q(\ta)^{2}  d \ta \right)  \int_{-\infty}^{0} \frac{1}{Q(\ta)^{2}}
|(\psi_1(\ta, \xi)-\psi_2(\ta, \xi))|^{2} d \ta  d \xi \\
& = & \left(\int_{-\infty}^{0} P(\ta)^{2} Q(\ta)^{2}  d \ta \right)   \int_{-\infty}^{0} \int_{0}^{\pi} \frac{1}{Q(\ta)^{2}}
|(\psi_1(\ta, \xi)-\psi_2(\ta, \xi))|^{2}   d \xi d \ta \\
%& = & \left(\int_{-\infty}^{0} P(\ta)^{2} Q(\ta)^{2}  d \ta \right)   \int_{-\infty}^{0} \frac{g(\ta)^{2}}{Q(\ta)^{2}} \frac{1}{g(\ta)^{2}}
%\int_{0}^{\pi}  |(\psi_1(\ta, \xi)-\psi_2(\ta, \xi))|^{2}   d \xi d \ta \\
& \leq  & \left(\int_{-\infty}^{0} P(\ta)^{2} Q(\ta)^{2}  d \ta \right)   \int_{-\infty}^{0} \frac{g(\ta)^{2}}{Q(\ta)^{2}}  d \ta \;
\|\psi_1-\psi_2\|_{\bb}^{2},
\end{eqnarray*}
which implies that $f$ satisfies the condition \textbf{(H1)}.

Thus, a direct application of Theorem~\ref{T1} yields the following result.

\begin{theorem} \label{C5.1}  Under the above conditions, assume in further that
 $\vp_{\tau} \in C_{g}^{0}(X)$. Then  problem \eqref{equ1.4}-\eqref{equ1.6} has a
mild solution $w \in C([0, a],  L^{2}([0, \pi]))$.
\end{theorem}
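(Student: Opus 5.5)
The plan is to apply Theorem~\ref{T1} to the abstract formulation described above, with $\bb = C_{g}^{0}(X)$ and $f(t,\psi) = \La(\psi) + h(t)$. Three things must be checked. First, $\bb$ is a phase space; this is Example~\ref{Ex1}. Second, $f$ satisfies the Carath\'eodory conditions (Car1)--(Car2) and \textbf{(H1)}. Third, and this is where the hypothesis $\vp_{\tau} \in C_{g}^{0}(X)$ enters, the initial history $\vp$ belongs to $\bb_{\tau}$.

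\emph{Conditions on $f$.} The computation preceding the statement shows that $\La$ is bounded. Indeed,
\[
\|\La\psi\| \leq L\,\|\psi\|_{\bb}, \qquad L = \left(\int_{-\infty}^{0} P(\ta)^{2} Q(\ta)^{2}\, d\ta \int_{-\infty}^{0} \frac{g(\ta)^{2}}{Q(\ta)^{2}}\, d\ta\right)^{1/2}.
\]
Hence \textbf{(H1)} holds with the constant function $\eta \equiv L$. For (Car1), $f(\cdot,\psi) = \La\psi + h(\cdot)$ is measurable because $h$ is integrable, hence strongly measurable, and $f(t,\cdot)$ is Lipschitz. For (Car2) we may take $\phi_{R}(t) = LR + \|h(t)\| \in L^{1}([0,a])$. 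Measurability of the joint expression $s\mapsto \La(x_{r(s)})$ is supplied by Lemma~\ref{L1}.

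\emph{Membership $\vp\in\bb_{\tau}$.} This is the main obstacle. We must show that $\vp_{s} \in C_{g}^{0}(X)$ for every $s\in[\tau,0]$ and that $s\mapsto \vp_{s}$ is continuous into $\bb$. Write $\psi = \vp_{\tau} \in C_{g}^{0}(X)$; then $\vp_{s} = \psi_{s-\tau}$ with $0 \leq s-\tau \leq -\tau$. So $\vp_{s}$ is a forward translate of $\psi$ (as opposed to the backward shift that fails in Example~\ref{Ex3}), namely the history at time $s-\tau\ge 0$ of the function $y(\sigma)=\psi(\sigma)$ for $\sigma\le 0$ and $y(\sigma)=\vp(\sigma+\tau)$ for $0\le\sigma\le -\tau$. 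Here we take $\vp$ continuous, as elements of $C_g^0(X)$ are. The function $y$ is continuous on $[0,-\tau]$ with $y_{0} = \psi \in \bb$. Axiom \textbf{(A)-(i)} then gives $\vp_{s} = y_{s-\tau} \in \bb$, and axiom \textbf{(A-1)} gives continuity of $s \mapsto y_{s-\tau} = \vp_{s}$ on $[\tau,0]$. The endpoint $s=0$ is handled by applying the axioms on a slightly larger interval, or simply by noting $\vp_{0}=\vp\in\bb$ together with left continuity from \textbf{(A-1)}. Alternatively one can verify directly from (g-1) and the monotonicity of $g$ that $\|\psi(\sigma+\theta)\|/g(\theta) \to 0$ as $\theta \to -\infty$. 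Thus $\vp \in \bb_{\tau}$.

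\emph{Conclusion.} Theorem~\ref{T1} now yields a unique mild solution $x\colon(-\infty,a]\to X$ that is continuous on $[0,a]$ with $x_{0}=\vp$. Setting $w(t,\xi) = [x(t)](\xi)$ gives $w \in C([0,a], L^{2}([0,\pi]))$, which is the mild solution of \eqref{equ1.4}--\eqref{equ1.6}. It also satisfies the boundary conditions in the abstract sense, since $T(t)$ maps into $\overline{D(A)}$ with Dirichlet data encoded in $A$.
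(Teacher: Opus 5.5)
Your proof is correct. Like the paper, once \textbf{(H1)} is read off from the preceding computation, you reduce everything to checking $\vp\in\bb_{\tau}$. The difference lies in how that membership is verified.

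The paper works by hand in $C_{g}^{0}(X)$. For membership it uses that $g$ is decreasing to bound $\|\vp(s+\ta)\|/g(\ta)$ by $\|\vp_{\tau}(\xi)\|/g(\xi)$ with $\xi=\ta+s-\tau$. For continuity of $s\mapsto\vp_{s}$ it runs an $\varepsilon$-argument: the tails $\ta\le\ta_{0}$ are made small uniformly in $s$, and uniform continuity of $\vp$ handles the compact remainder.

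You instead write $\vp_{s}=y_{s-\tau}$ for an auxiliary function with $y_{0}=\vp_{\tau}\in\bb$, and invoke \textbf{(A)-(i)} and \textbf{(A-1)}. This buys several things:
\begin{enumerate}
\item[(a)] It is shorter and never uses the monotonicity of $g$.
\item[(b)] Via \textbf{(A)-(iii)} it yields the quantitative bound
\[
\|\vp\|_{\tau}\le \max_{0\le t\le -\tau}K(t)\,\max_{\tau\le\sigma\le 0}\|\vp(\sigma)\|+\sup_{0\le t\le -\tau}M(t)\,\|\vp_{\tau}\|_{\bb}.
\]
\item[(c)] It is valid in any phase space, so it also proves Theorem~\ref{C5.2} verbatim. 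There it would replace the $\rho$-weighted estimates and the density argument.
\end{enumerate}
The paper's computation, on the other hand, is elementary. It does not rely on the imported fact that $C_{g}^{0}(X)$ satisfies \textbf{(A)} and \textbf{(A-1)}.

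One small correction concerns the endpoint $s=0$. Axiom \textbf{(A-1)} only gives continuity on the half-open interval $[\sigma,\sigma+b)$. So your second option there (``left continuity from \textbf{(A-1)}'') is not justified as stated. Use your first option instead: extend $y$ continuously beyond $-\tau$, say by the constant $\vp(0)$, and the endpoint is covered.
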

\begin{proof} It only remains to prove that $\vp_{s} \in \bb$ for all $\tau \leq s \leq 0$, and that the function
$[\tau, 0] \to \bb$, $s \mapsto  \vp_{s}$ is continuous. For $\tau \leq s \leq 0$, $\ta \leq 0$ and $\xi = \ta +s - \tau$, we have
\begin{eqnarray*}
\frac{\|\vp_{s}(\ta)\|}{g(\ta)} & = & \frac{\|\vp(s +\ta)\|}{g(\ta)} \\
& = & \frac{\|\vp(s + \tau - s + \xi)\|}{g(\tau - s + \xi)} \\
& \leq  & \frac{\|\vp( \tau  + \xi)\|}{g( \xi)} \\
& \leq  & \frac{\|\vp_{\tau} (\xi)\|}{g( \xi)},
\end{eqnarray*}
which implies that ${\displaystyle \frac{\|\vp_{s}(\ta)\|}{g(\ta)} \to 0}$ as $\ta \to - \infty$. Therefore, $\vp_{s} \in C_{g}^{0}(X)$,
and this shows the first assertion. To establish the second assertion, let $\ve > 0$ and  $\tau \leq s, s^{\prime} \leq 0$.
Proceeding as above, there exists $\ta_{0} < 0$ such that
${\displaystyle \frac{1}{g(\ta)} \|\vp(s^{\prime} + \ta) \| \leq \ve} $ and
${\displaystyle \frac{1}{g(\ta)} \| \vp(s + \ta) \| \leq \ve}$ for all $\ta \leq \ta_{0}$. Hence,
\begin{eqnarray*}
\|\vp_{s^{\prime}} - \vp_{s} \|_{\bb} & = & \sup_{\ta \leq 0} \frac{1}{g(\ta)} \|\vp(s^{\prime} + \ta)  - \vp(s + \ta) \| \\
& \leq  & \max \Big\{ 2 \ve, \sup_{\ta_{0} \leq \ta \leq 0} \frac{1}{g(\ta)} \|\vp(s^{\prime} + \ta)  - \vp(s + \ta) \| \Big\}.
\end{eqnarray*}
Since $\vp$ is uniformly continuous on bounded intervals, we can take $|s^{\prime} - s|$ sufficiently small such that
${\displaystyle \sup_{\ta_{0} \leq \ta \leq 0} \frac{1}{g(\ta)} \|\vp(s^{\prime} + \ta)  - \vp(s + \ta) \| \leq \ve }$. Combining
these assertions, we conclude that  $\|\vp_{s^{\prime}} - \vp_{s} \|_{\bb} \leq 2 \ve$. This completes the proof of the second assertion.

\end{proof}

\begin{remark} \label{Ex4}
\rm Under conditions (g-1) and (g-2) established in Example~\ref{Ex2} we can affirm that the phase space  $C_{g}^{0}(X)$  satisfies axiom \textbf{(C-2)} (\cite[Theorem~1.3.2]{Hi}).
In similar way, if $\rho$ satisfies the conditions (g-5) and (g-6) given  in Example~\ref{Ex2}
and  $\int_{-\infty}^{0} \rho(\ta) d \ta < \infty$, then the
phase space $C_{0} \times L^{p}(\rho ,X)$ satisfies axiom \textbf{(C-2)} (\cite[Theorem~1.3.8]{Hi}).
\end{remark}

\medskip

\noindent{\bf Case 2:} Assume that $\vp \in  C_{0} \times L^{2}(\rho ,X)$, where  $\rho$ is a function that satisfies the conditions
considered in Example~\ref{Ex2} and Remark~\ref{Ex4}.
We take as phase space $\bb = C_{0} \times L^{2}(\rho ,X)$.
Assume that
${\displaystyle \int_{-\infty}^{0} \frac{|P(\ta)|^{2}}{\rho(\ta)}  d \ta < \infty}$.
For $\psi_1,\psi_2 \in \bb$ and $t \in [0,a]$, we have
\begin{eqnarray*}
\|f(t,\psi_1)-f(t,\psi_2)&=&\|\La(\psi_1-\psi_2)\|^{2}\\ & = & \int_{0}^{\pi} \left| \int_{-\infty}^{0} P(\ta) (\psi_1(\theta,\xi)-\psi_2(\theta,\xi)) d \ta \right|^{2} d \xi \\
& \leq  & \int_{0}^{\pi} \left(\int_{-\infty}^{0} \frac{|P(\ta)|^{2}}{\rho(\ta)} d \ta \right)  \int_{-\infty}^{0} \rho(\ta)
|(\psi_1(\theta,\xi)-\psi_2(\theta,\xi))|^{2} d \ta  d \xi \\
& = & \left(\int_{-\infty}^{0} \frac{|P(\ta)|^{2}}{\rho(\ta)}  d \ta \right)   \int_{-\infty}^{0} \rho(\ta) \int_{0}^{\pi}
|(\psi_1(\theta,\xi)-\psi_2(\theta,\xi))|^{2}   d \xi d \ta \\
& \leq  & \left(\int_{-\infty}^{0} \frac{|P(\ta)|^{2}}{\rho(\ta)}  d \ta \right)   \|\psi_1-\psi_2\|_{\bb}^{2}
\end{eqnarray*}
which implies that $f$ satisfies the condition \textbf{(H1)}.
Thus, a direct application of Theorem~\ref{T1} yields the following result.

\begin{theorem} \label{C5.2}  Under the above conditions, assume in further that $\vp_{\tau} \in C_{0} \times L^{2}(\rho ,X)$ and
 $\vp\colon[\tau, 0] \to X$ is continuous. Then  problem \eqref{equ1.4}-\eqref{equ1.6} has a
mild solution $w \in C([0, a],  L^{2}([0, \pi]))$.
\end{theorem}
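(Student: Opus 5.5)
As in the proof of Theorem~\ref{C5.1}, the (H1) estimate of Case~2 is already available, and Theorem~\ref{T1} gives the mild solution once we check that $\vp\in\bb_{\tau}$. So we must show that $\vp_{s}\in C_{0}\times L^{2}(\rho,X)$ for every $s\in[\tau,0]$, and that $s\mapsto\vp_{s}$ is continuous from $[\tau,0]$ into $\bb$. The solution $w$ is then recovered through the identification $[x(t)](\xi)=w(t,\xi)$, and $w\in C([0,a],L^{2}([0,\pi]))$ because the mild solution is continuous on $[0,a]$.

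\emph{Membership.} Fix $s\in[\tau,0]$ and split the weighted integral of $\vp_{s}$ as
\[
\int_{-\infty}^{0}\rho(\ta)\|\vp(s+\ta)\|^{2}d\ta=\int_{-\infty}^{\tau-s}\rho(\ta)\|\vp(s+\ta)\|^{2}d\ta+\int_{\tau-s}^{0}\rho(\ta)\|\vp(s+\ta)\|^{2}d\ta .
\]
For the first piece, substitute $\ta=\xi+\tau-s$ with $\xi\le0$. Then $\vp(s+\ta)=\vp_{\tau}(\xi)$, and (g-6) gives $\rho(\xi+(\tau-s))\le\gamma(\tau-s)\rho(\xi)$ for a.e.\ $\xi$. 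Hence this piece is bounded by $\gamma(\tau-s)\int_{-\infty}^{0}\rho\|\vp_{\tau}\|^{2}<\infty$, which uses the hypothesis $\vp_{\tau}\in C_{0}\times L^{2}(\rho,X)$. For the second piece, $\vp$ is continuous on $[\tau,0]$ and so bounded there, while $\rho$ is locally integrable; the piece is therefore finite. Finally, $\vp_{s}(0)=\vp(s)$ is a genuine point value because $\vp$ is continuous on $[\tau,0]$. Thus $\vp_{s}\in\bb$.

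\emph{Continuity.} Take $s,s'\in[\tau,0]$. The term $\|\vp(s')-\vp(s)\|$ tends to $0$ by continuity of $\vp$ on $[\tau,0]$. The $L^{2}(\rho)$ term is $\int_{-\infty}^{0}\rho(\ta)\|\vp(s'+\ta)-\vp(s+\ta)\|^{2}d\ta$, which we treat by a translation-continuity argument. Write $\vp=\vp\chi_{[\tau,0]}+\vp\chi_{(-\infty,\tau)}$.

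For the compact-support part, $\vp\chi_{[\tau,0]}$ is bounded and $\rho$ is integrable on bounded sets. It is continuous except for a jump at $\tau$, so its translates converge a.e.\ and dominated convergence applies.

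For the tail part, write it as $\psi(\cdot+\tau)$, where $\psi=\vp_{\tau}\chi_{(-\infty,0)}$ lies in $L^{2}(\rho)$. Then we need continuity of $\sigma\mapsto\psi(\cdot+\sigma)$ in $L^{2}(\rho)$ for $\sigma$ in a compact range. This should follow from two facts. First, the translation bound from (g-6) makes translations uniformly bounded operators, since $\gamma$ is locally bounded. Second, $C_{00}$ functions are dense in $L^{2}(\rho,X)$ (with $\rho$ locally integrable), and translation is continuous on them by dominated convergence. A standard $\varepsilon/3$ argument then completes the step.

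The main obstacle is this continuity step, specifically the uniform-boundedness of translations and the density argument. The boundedness comes from $\|\psi(\cdot+\sigma)\|_{L^{2}(\rho)}^{2}\le\gamma(\sigma)\|\psi\|_{L^{2}(\rho)}^{2}$, obtained from (g-6) after the change of variables. With membership and continuity in hand, $\vp\in\bb_{\tau}$, and Theorem~\ref{T1} yields the mild solution.
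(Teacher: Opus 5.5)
Your proposal is correct and follows the paper's proof: you use the same splitting at $\tau-s$ with (g-6) giving the factor $\gamma(\tau-s)$ for membership, and the same translation-continuity argument (approximation by continuous functions plus dominated convergence) for continuity, where you make explicit the uniform bound on translations that the paper's ``standard argument'' leaves implicit. One small slip: the tail is $\psi(\cdot-\tau)$, not $\psi(\cdot+\tau)$, and since $\gamma$ is only defined on $(-\infty,0]$, for $\sigma=s-\tau\in[0,-\tau]$ the bound should read $\|\psi(\cdot+\sigma)\|^{2}_{L^{2}(\rho)}\le\gamma(-\sigma)\|\psi\|^{2}_{L^{2}(\rho)}$, which is still uniformly bounded by local boundedness of $\gamma$.
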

\begin{proof} It only remains to prove that $\vp_{s} \in \bb$ for all $\tau \leq s \leq 0$, and that the function
$[\tau, 0] \to \bb$, $s \mapsto  \vp_{s}$ is continuous. For $\tau \leq s \leq 0$, we have
\begin{eqnarray*}
\lefteqn{\left(\int_{-\infty}^{0} \rho(\ta) \|\vp(s + \ta)\|^{2} d \ta \right)^{1/2}  =  \left(\int_{-\infty}^{\tau -s} \rho(\ta) \|\vp(s + \ta)\|^{2} d \ta
+ \int_{\tau -s}^{0} \rho(\ta) \|\vp(s + \ta)\|^{2} d \ta \right)^{1/2}} \\
& \leq  & \left(\int_{-\infty}^{\tau -s} \rho(\ta) \|\vp(s + \ta)\|^{2} d \ta \right)^{1/2}
+ \left( \int_{\tau -s}^{0} \rho(\ta) \|\vp(s + \ta)\|^{2} d \ta \right)^{1/2} \\
& \leq   & \left(\int_{-\infty}^{0} \rho(\xi + \tau -s) \|\vp(\tau + \xi)\|^{2} d \xi \right)^{1/2}
+ \left( \int_{\tau -s}^{0} \rho(\ta)  d \ta \right)^{1/2} \max_{\tau \leq \xi \leq 0} \|\vp(\xi)\| \\
& \leq   & \gamma(\tau -s)^{1/2} \left(\int_{-\infty}^{0} \rho(\xi) \|\vp(\tau + \xi)\|^{2} d \xi \right)^{1/2}
+ \left( \int_{\tau -s}^{0} \rho(\ta)  d \ta \right)^{1/2} \max_{\tau \leq \xi \leq 0} \|\vp(\xi)\| \\
& \leq   & \gamma(\tau -s)^{1/2} \|\vp_{\tau}\|_{\bb}
+ \left( \int_{\tau -s}^{0} \rho(\ta)  d \ta \right)^{1/2} \max_{\tau \leq \xi \leq 0} \|\vp(\xi)\|,
\end{eqnarray*}
which implies that ${\displaystyle \|\vp_{s}\|_{\bb} < \infty}$. Therefore, $\vp_{s} \in C_{0} \times L^{2}(\rho ,X)$,
which shows the first assertion. To establish the second assertion, let $\ve > 0$ and  $\tau \leq s, s^{\prime} \leq 0$.
Since $\vp : [\tau, 0] \to X$ is continuous, there is $\delta_{1} > 0$ such that
\[
\| \vp(s) - \vp(s^{\prime}) \| \leq \ve,
\]
for $| s - s^{\prime}| \leq \delta_{1}$. In addition, since the functions $\ta \mapsto \rho(\ta) \|\vp(\ta)\|^{2}$, for $s \in [\tau, 0]$,
are integrable on $(- \infty, 0]$, an standard argument approximating by continuous functions and applying the Lebesgue dominated convergence
theorem shows that there exists $\delta_{2} > 0$ such that
\[
\left(\int_{-\infty}^{0} \rho(\ta) \|\vp(s + \ta) - \vp(s^{\prime} + \ta)\|^{2} d \ta \right)^{1/2} \leq \ve,
\]
for $| s - s^{\prime}| \leq \delta_{2}$. Hence, for $| s - s^{\prime}| \leq \delta = \min \{\delta_{1}, \delta_{2} \}$, we have
\[
\| \vp_{s} - \vp_{s^{\prime}} \|_{\bb} = \| \vp(s) - \vp(s^{\prime}) \| +  \left(\int_{-\infty}^{0} \rho(\ta) \|\vp(s + \ta) - \vp(s^{\prime} + \ta)\|^{2} d \ta \right)^{1/2} \leq 2\ve,
\]
This completes the proof of the second assertion.

\end{proof}

\setcounter{equation}{0}
\setcounter{remark}{0}
\setcounter{pr}{0}
\setcounter{de}{0}
\setcounter{theorem}{0}
\setcounter{lema}{0}
\setcounter{co}{0}
\setcounter{example}{0}

\section{Linear systems } \label{linear}
Motivated by the model discussed in the previous section, we study the class of abstract linear systems of the form \eqref{equ4.1}--\eqref{equ4.2}. Specifically, we assume that $\Lambda : \mathcal{B} \to X$ is a bounded linear operator, $h : [0, a] \to X$ is integrable, and the initial condition $\varphi$ belongs to $\mathcal{B}_{\tau}$.

\begin{de} \label{D8} A function $x\colon (-\infty, a] \to X$ is said to be a mild
solution of problem \eqref{equ4.1}-\eqref{equ4.2}
if $x$ is continuous on $[s, a]$, the condition \eqref{equ4.2} holds for $\vp \in \bb_{\tau}$, and the integral equation
\begin{equation}
x(t) = T(t -s) \vp(0) + \int_{s}^{t} T(t -\xi) \La(x_{r(\xi)}) d \xi + \int_{s}^{t} T(t -\xi) h(\xi) d \xi, \qquad s \leq t \leq a,
\label{equ3.3}
\end{equation}
is satisfied. 
\end{de}
\begin{remark}\rm 
Similarly as in Theorem~3.1, but considering certain modifications in the proof, we can affirm that  problem (\ref{equ4.1})-(\ref{equ4.2}) has a unique mild
solution defined on $(-\infty, a]$ for all $0 \leq s < a$. In fact, let $0 \leq s < a$, and consider the closed convex subset $\hat{Y}=\{u\in C([s,a],X): u(s)=\vp(0)\}$, where $\vp\in\mathcal{B}_{\tau}$. In this case, for $u\in\hat{Y}$ we identify $u\in C([s,a],X)$ with its extention to $(-\infty,s]$ which is defined by $u(\theta)=\vp(\theta-s)$ for all $\theta\in(-\infty,s]$.  We consider the map $\Gamma$ on $\hat{Y}$ defined by 
\[
\Gamma(u)(t)=T(t -s) \vp(0) + \int_{s}^{t} T(t -\xi) \La(u_{r(\xi)}) d \xi + \int_{s}^{t} T(t -\xi) h(\xi) d \xi, \quad s \leq t \leq a.
\]
For every $u,v\in\hat{Y}$ we can estimate
\begin{eqnarray*}
	\|\Gamma (u) (t) - \Gamma (v) (t) \|  & \leq & N \int_{s}^{t} \|\La\|\|  u_{r(\xi)} -
	v_{r(\xi)} \|_{\bb} d \xi \\
	& \leq &  N \|\La\| K_{a} \int_{s}^{t}  \max_{s \leq \theta \leq r(\xi)} \| u(\theta) - v(\theta) \| d \xi \\
	& \leq &  N K_{a}\|\La\|(t-s)\|u-v\|_{\infty},
\end{eqnarray*}
for all $0 \leq t \leq a$. Inductively, we can show that for every $n\in\mathbb{N}$: 
\[
\|\Gamma^{n} (u) (t) - \Gamma^{n} (v) (t) \|\leq \frac{(N K_{a} \|\La\|(t-s))^{n}}{n!} \| u - v \|_{\infty}, \qquad t\in[0,a].
\]
Therefore, for $n$ large enough, the map $\Gamma^{n}$ is a contraction. Consequently, the map $\Gamma$ has a unique fixed point $x$, which is the unique mild solution of problem (\ref{equ4.1})-(\ref{equ4.2}).

\end{remark}
\subsection{The homogeneous case }
Throughout this subsection, we assume that $h(t) = 0$ for all $t \geq 0$. Let $ x(\cdot, s, \vp)$ be the  mild solution of problem  (\ref{equ4.1})-(\ref{equ4.2}).
We define the map
\begin{equation}
U(t, s) \vp = x_{t}(\cdot, s, \vp), \;\; t \geq s. \label{equ4.3}
\end{equation}

We recall the concept of evolution family. We denote $\Delta = \{(t,s)\in \mathbb{R}^2: 0 \leq s \leq t, \; t \leq a \}$.

\begin{de} \label{D3} A family $(U(t,s))_{(t,s)\in\Delta} $ is said to be an evolution family on a Banach space $Y$
if the following conditions are fulfilled:
\begin{itemize}
\item[(i)] The function $U(t,s) : Y \to Y$ is a bounded linear map for all $s \leq t$.
\item[(ii)] $U(s,s) = I$, for $0 \leq s \leq a$.
\item[(iii)] $U(t,s_{1}) = U(t,s_{2}) U(s_{2},s_{1})$ for $0 \leq s_{1} \leq s_{2} \leq t \leq a$.
\item[(iv)] The function $U(\cdot, \cdot) : \Delta   \to \lc(Y)$ is strongly continuous.
\end{itemize}
\end{de}

\begin{pr} \label{P4} The family $(U(t,s))_{t \geq s} $ given by
\eqref{equ4.3}  is an evolution family  on $\bb_{\tau}$.
\end{pr}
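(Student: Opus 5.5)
We verify the four conditions of Definition~\ref{D3} in turn, using throughout the existence and uniqueness of the mild solution $x(\cdot,s,\vp)$ from the preceding Remark.

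\emph{Well-definedness.} For $\vp \in \bb_{\tau}$ and $t\ge s$, the function $y(\theta)=x(s+\theta,s,\vp)$ is continuous on $[0,a-s]$ and satisfies $y_0=\vp\in\bb_\tau$. Lemma~\ref{L3.1} (applied after this shift) shows that $x_t=y_{t-s}\in\bb_\tau$ and that $t\mapsto x_t$ is continuous into $\bb_\tau$. Hence $U(t,s)\colon\bb_\tau\to\bb_\tau$ is well defined.

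\emph{Linearity and boundedness.} If $x,y$ are the mild solutions for $\vp,\psi$, then $\alpha x+\beta y$ satisfies \eqref{equ3.3} with $h=0$ and initial datum $\alpha\vp+\beta\psi$, because $\La$ is linear. Uniqueness then gives linearity of $U(t,s)$. For the bound, set $m(t)=\sup_{s\le\theta\le t}\|x(\theta)\|$. From axiom \textbf{(A)-(iii)}, for $r(\xi)\ge s$ we have
\[
\|x_{r(\xi)}\|_{\bb}\le K_a m(\xi)+M_a\|\vp\|_{\bb},
\]
while for $r(\xi)<s$ we have $x_{r(\xi)}=\vp_{r(\xi)-s}$. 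Since $r(\xi)-s\ge r(\xi)-\xi\ge\tau$, this gives $\|x_{r(\xi)}\|_{\bb}\le\|\vp\|_\tau$. Plugging these into \eqref{equ3.3} yields
\[
m(t)\le N H\|\vp\|_\tau + N\|\La\|\int_s^t (K_a m(\xi)+(M_a+1)\|\vp\|_\tau)\,d\xi .
\]
Gronwall's inequality then gives $m(a)\le C\|\vp\|_\tau$ with $C$ independent of $s$ and $\vp$. Next we bound $\|x_t\|_\tau=\sup_{\tau\le\eta\le 0}\|x_{t+\eta}\|_{\bb}$. For $t+\eta\ge s$, axiom \textbf{(A)-(iii)} gives a bound $K_a m(a)+M_a\|\vp\|_{\bb}$. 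For $t+\eta<s$, we have $x_{t+\eta}=\vp_{t+\eta-s}$ with $t+\eta-s\ge\tau$, so its norm is at most $\|\vp\|_\tau$. Thus $\|U(t,s)\|\le C'$ uniformly on $\Delta$.

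\emph{Identity and evolution property.} $U(s,s)=I$ is immediate from \eqref{equ4.2}. For (iii), let $x=x(\cdot,s_1,\vp)$ and $s_1\le s_2$. Define $y$ by $y(\theta)=x(\theta)$ for $\theta\le a$. Then $y_{s_2}=U(s_2,s_1)\vp$, and for $t\ge s_2$, the semigroup property $T(t-s_2)T(s_2-\xi)=T(t-\xi)$ splits the integral in \eqref{equ3.3}:
\[
x(t)=T(t-s_2)x(s_2)+\int_{s_2}^tT(t-\xi)\La(x_{r(\xi)})\,d\xi .
\]
So $x$ is a mild solution on $[s_2,a]$ with initial datum $x_{s_2}$ at time $s_2$. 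Note that the values $x_{r(\xi)}$ for $r(\xi)<s_2$ are exactly the histories determined by $x_{s_2}$, which is consistent with the extension convention of the Remark. Uniqueness gives $x=x(\cdot,s_2,U(s_2,s_1)\vp)$, which is (iii).

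\emph{Strong continuity.} This is the main obstacle, specifically continuity in $s$. Continuity in $t$ for fixed $s$ already follows from Lemma~\ref{L3.1}. For $s\le s'$, use (iii) to write
\[
U(t,s)\vp-U(t,s')\vp=U(t,s')\bigl(U(s',s)\vp-\vp\bigr).
\]
By the uniform bound $C'$, it then suffices to show $\|U(s',s)\vp-\vp\|_\tau\to0$ as $s'-s\to0$, uniformly enough in $s$. I would handle this by comparing $x_{s'}$ with $\vp$ directly: for each $\eta\in[\tau,0]$, $[x_{s'}]_\eta$ equals $\vp_{\eta+s'-s}$ or an $x$-history near time $s$. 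One then uses the uniform continuity of $\eta\mapsto\vp_\eta$ on $[\tau,0]$, together with axiom \textbf{(A)-(iii)} and the estimate $\sup_{s\le\theta\le s'}\|x(\theta)-\vp(0)\|\to0$, exactly as in the proof of Lemma~\ref{L3.1}. A small subtlety is extending $\vp_\eta$ slightly beyond $\eta=0$; the continuity of $x$ at $s$ handles this. Joint continuity on $\Delta$ then follows by combining this with continuity in $t$ and the uniform operator bound via a triangle inequality.
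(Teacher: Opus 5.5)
Your proposal is correct and follows essentially the same route as the paper. Both arguments bound $\|x(t)\|$ by Gronwall using axiom \textbf{(A)-(iii)}, splitting into the cases $r(\xi)\ge s$ and $r(\xi)<s$. Both get the evolution property from uniqueness, although the paper re-runs the Gronwall difference estimate instead of citing uniqueness. Both get continuity in $s$ from the identity $U(t,s)\vp-U(t,s')\vp=U(t,s')\bigl(U(s',s)\vp-\vp\bigr)$ together with the uniform operator bound.

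You insist that $\|U(s',s)\vp-\vp\|_\tau\to0$ hold uniformly in the starting time $s$, obtained from the integral equation, \textbf{(A)-(iii)} and \textbf{(A-1)} as in Lemma~\ref{L3.1}; the paper passes over this point. Its appeal to continuity of $U(\cdot,s)\vp$ for fixed $s$ gives only right-continuity in $s$. It does not by itself give left-continuity or joint continuity at the diagonal of $\Delta$.
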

\begin{proof} By (A)-(i), it follows that $U(t,s) \vp \in \bb_{\tau}$ for all $\vp \in \bb_{\tau}$.
Moreover, as usual the uniqueness of solutions implies
that $U(t, s)$ is a linear mapping. In addition, it follows from \eqref{equ3.3} that
\begin{equation}
\|x(t)\| \leq N \|\vp(0)\| + N \|\La\| \int_{s}^{t} \|x_{r(\xi)}\|_{\bb} d \xi, \label{equ3.5}
\end{equation}
where we abbreviate $x(t) = x(t,s, \vp)$.
To estimate $\|x_{r(\xi)}\|_{\bb}$ we consider two cases.

(i) If $s \leq r(\xi)$, then
\[
\|x_{r(\xi)}\|_{\bb} \leq K(r(\xi) -s) \max_{s \leq \sa \leq r(\xi)} \| x(\sa)\|  + M(r(\xi) -s) \|\vp\|_{\bb}.
\]

(ii) If $ r(\xi) < s$, then
\[
\|x_{r(\xi)}\|_{\bb} = \|\vp_{r(\xi) -s}\|_{\bb} \leq \|\vp\|_{\tau}.
\]
Therefore, in any case, we have
\[
\|x_{r(\xi)}\|_{\bb} \leq K_{a} \max_{s \leq \sa \leq \xi} \| x(\sa)\|  + M_{a} \|\vp\|_{\tau}.
\]
Substituting in \eqref{equ3.5}, and applying the Gronwall-Bellman lemma, we obtain
\[
\|x(t)\| \leq C_{1} e^{C_{2} (t -s)} \|\vp\|_{\tau}
\]
for some generic constants $C_{1}, C_{2} \geq 0$ independent of $\vp$. Hence,
\[
\|x_{t}\|_{\bb} \leq K_{a} \max_{s \leq \sa \leq t} \| x(\sa)\|  + M_{a} \|\vp\|_{\tau}
\]
which can be abbreviated as
\[
\|x_{t}\|_{\bb} \leq  C_{1} e^{C_{2} (t -s)} \|\vp\|_{\tau},
\]
and
\[
\|x_{t}\|_{\tau} \leq  C_{1} e^{C_{2} (t -s)} \|\vp\|_{\tau},
\]
where we assume that $C_{1} \geq 1$. This shows that $U(\cdot,\cdot)$ satisfies (i) of Definition~\ref{D3}.
The assertion (ii) of Definition~\ref{D3} is immediate.

In order to prove (iii) of Definition~\ref{D3}, assume that  $0 \leq s_{1} \leq s_{2} \leq s_{3}$.
Let $x = x(\cdot, s_{1}, \vp) $ be the mild solution of \eqref{equ4.1} on $[s_{1}, a]$ with initial condition
$x_{s_{1}}(\cdot, s_{1}, \vp) = \vp$. We set $\psi = x_{s_{2}}(\cdot, s_{1}, \vp)$, and let $y = y(\cdot, s_{2}, \psi)$
be the mild solution of \eqref{equ4.1} on $[s_{2}, a]$ with initial condition $y_{s_{2}}(\cdot, s_{2}, \psi) = \psi$.
We will show that
\[
x(t, s_{1}, \vp) = y(t, s_{2}, \psi),
\]
for all  $ s_{2} \leq t \leq s_{3}$. It follows from \eqref{equ3.3} that
\begin{eqnarray*}
x(t) & = & T(t -s_{1}) \vp(0) + \int_{s_{1}}^{t} T(t - s) \La(x_{r(s)}) d s \\
& = & T(t -s_{1}) \vp(0) + \int_{s_{1}}^{s_{2}} T(t - s) \La(x_{r(s)}) d s + \int_{s_{2}}^{t} T(t - s) \La(x_{r(s)}) d s,
\end{eqnarray*}
and
\begin{eqnarray*}
y(t) & = & T(t -s_{2}) \psi(0) + \int_{s_{2}}^{t} T(t - s) \La(y_{r(s)}) d s \\
& = & T(t -s_{2}) \Big[ T(s_{2} -s_{1}) \vp(0) + \int_{s_{1}}^{s_{2}} T(s_{2} - s) \La(x_{r(s)}) d s \Big] +
\int_{s_{2}}^{t} T(t - s) \La(y_{r(s)}) d s.
\end{eqnarray*}
Therefore,
\[
y(t) - x(t) = \int_{s_{2}}^{t} T(t - s) \La(y_{r(s)} - x_{r(s)}) d s,
\]
which implies that
\[
\|y(t) - x(t)\| \leq  N \|\La\| \int_{s_{2}}^{t} \|y_{r(s)} - x_{r(s)}\|_{\bb} d s.
\]
We next estimate $\|y_{r(s)} - x_{r(s)}\|_{\bb}$. We analyze two cases:

a) If $s_{2} \leq r(s) $, then
\[
 \|y_{r(s)} - x_{r(s)}\|_{\bb} \leq K_{a} \max_{s_{2} \leq \xi \leq r(s)} \|y(\xi) - x(\xi) \|.
\]

b) If $ r(s) < s_{2}$, then $y_{r(s)} = x_{r(s)}$.

Combining cases a) and b), we can affirm that
\begin{eqnarray*}
\|y(t) - x(t)\| & \leq &  N \|\La\| K_{a} \int_{s_{2}}^{t}  \max_{s_{2} \leq \xi \leq r(s)} \|y(\xi) - x(\xi) \| d s \\
& \leq & N \|\La\| K_{a} \int_{s_{2}}^{t}  \max_{s_{2} \leq \xi \leq t} \|y(\xi) - x(\xi) \| d s,
\end{eqnarray*}
and applying the Gronwall-Bellman lemma we obtain that $x(t) = y(t)$.

Finally, we study the strong continuity of $U(\cdot,\cdot)$. Initially, we study
$U(\cdot, s) \vp $ for fixed $0 \leq s <a$. Let $x = x(\cdot, s, \vp) $ be the mild solution of problem \eqref{equ4.1}-\eqref{equ4.2}.
Since $x(\cdot)$ is continuous, it follows from Lemma~\ref{L3.1} that the function $U(\cdot, s) \vp $ is $\bb_{\tau}$-continuous.

We now study the continuity of $U(t, \cdot) \vp $ on $[0, t]$ for fixed $0 <t  \leq a$. Assuming $\delta \geq 0$ and $s + \delta \leq t$,
and using  that
\[
U(t, s) \vp = U(t, s + \delta)  U(s+ \delta, s) \vp
\]
which implies that
\[
U(t, s) \vp - U(t, s + \delta) \vp = U(t, s + \delta) [ U(s+ \delta, s) \vp - \vp].
\]
By the previous part of this proof, we know that  $\|U(s+ \delta, s) \vp - \vp\|_{\tau} \to 0$ as $\delta \to 0$. Moreover, since
$U(\cdot,\cdot)$ is uniformly bounded on $\Delta$, we conclude that the function $U(t, \cdot) \vp$ is continuous for the norm
in $\bb_{\tau}$ and that this property is independent of $t \in [0, a]$. From this fact  we conclude that the function
$U(\cdot,\cdot)$ is continuous on $\Delta$ for the norm in $\bb_{\tau}$.

\end{proof}

In the unperturbed case where $\La = 0$, $h=0$ and $s = 0$, the mild solution of the problem \eqref{equ4.1}--\eqref{equ4.2} is governed by a strongly continuous semigroup of linear operators $(V(t))_{t \geq 0}$ on $\mathcal{B}$, which corresponds to the translation semigroup associated with $(T(t))_{t \geq 0}$. Specifically, for any $\vp \in \mathcal{B}$ and $\ta \leq 0$, this semigroup is given by
\[
[V(t) \vp](\ta) = 
\begin{cases} 
T(t + \ta) \vp(0), &  t + \ta \geq 0, \\
\vp(t + \ta), &  t + \ta \leq 0.
\end{cases}
\]

\begin{remark}
 \rm   
In what follows,
\begin{itemize}
\item we assume that $\bb$ satisfies axiom \textup{\textbf{(C-2)}}.

\medskip

\item for $x \in X$ and $\la \in \mathbb{R}$, we denote by $F_{\la} x$ the function
\[
[F_{\la} x](\ta)  =  e^{\la \ta} x, \; - \infty < \ta \leq 0.
\]
\end{itemize}
\end{remark}

The relationship among  $C_{0}((-\infty, 0], X)$,  $\bb_{\tau}$ and $V(t)$ is established in the following result.
\begin{lema} \label{L4.1} The family  $(V(t))_{t \geq 0}$ is a strongly continuous semigroup of linear operators on $\bb_{\tau}$. Moreover,
$C_{0}((-\infty, 0], X)$ is a closed subspace of $\bb_{\tau}$ invariant under $(V(t))_{t \geq 0}$, and
$\|\vp\|_{\tau} \leq Q \|\vp\|_{\infty}$ for all $\vp \in C_{0}((-\infty, 0], X)$.
\end{lema}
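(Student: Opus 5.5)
The plan is to identify $V(t)$ with the solution operator of the unperturbed problem and then reuse Proposition~\ref{P4}. Take $\La = 0$ and $h=0$ in \eqref{equ4.1}--\eqref{equ4.2}. For $\vp \in \bb_{\tau}$ and $s\ge 0$, the mild solution is $x(t) = T(t-s)\vp(0)$ for $t\ge s$ and $x(\ta)=\vp(\ta-s)$ for $\ta\le s$. A direct check gives $x_t(\cdot,s,\vp) = V(t-s)\vp$. Proposition~\ref{P4} then shows that $U(t,s)=V(t-s)$ is an evolution family on $\bb_{\tau}$. Hence $V(t)$ maps $\bb_{\tau}$ boundedly into itself, $V(0)=I$, and $V(t+\sigma)=U(t+\sigma,\sigma)U(\sigma,0)=V(t)V(\sigma)$. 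Strong continuity of $t\mapsto V(t)\vp$ in $\bb_{\tau}$ follows from Lemma~\ref{L3.1} applied to $x(t)=T(t)\vp(0)$. This settles the first assertion.

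For the second assertion, I first show $C_{0}((-\infty,0],X)\subseteq \bb_{\tau}$. By the remark following Lemma~\ref{R3.1}, which uses axiom \textbf{(C-2)} through Remark~\ref{R2.3}, $C_b((-\infty,0],X)$ embeds continuously in $\bb_{\tau}$ with $\|\vp\|_{\tau}\le Q\|\vp\|_{\infty}$. The only point to check is continuity of $s\mapsto \vp_s$ from $[\tau,0]$ into $\bb$, and I will verify this directly for $\vp\in C_0$. Each $\vp\in C_0$ is uniformly continuous on $(-\infty,0]$: it is uniformly continuous on compact sets and uniformly small near $-\infty$. Therefore $\|\vp_s-\vp_{s'}\|_{\infty}\to 0$ as $s'\to s$, and the bound $\|\cdot\|_{\bb}\le Q\|\cdot\|_{\infty}$ gives continuity in $\bb$. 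The same estimate, taken with a supremum over $s$, yields the norm inequality $\|\vp\|_{\tau}=\sup_{s}\|\vp_s\|_{\bb}\le Q\sup_s\|\vp_s\|_\infty\le Q\|\vp\|_{\infty}$.

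Invariance is next. For $\vp\in C_0$, the function $V(t)\vp$ is continuous: it agrees with $T(t+\ta)\vp(0)$ on $[-t,0]$ and with $\vp(t+\ta)$ on $(-\infty,-t]$, and the two expressions match at $\ta=-t$. It tends to $0$ at $-\infty$ because it coincides with a translate of $\vp$ there. So $V(t)\vp\in C_0$.

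The delicate step is closedness of $C_0$ in $\bb_{\tau}$. The inequality $\|\cdot\|_{\tau}\le Q\|\cdot\|_\infty$ only gives one direction, and the $\tau$-norm is generally weaker than the sup norm, as in $C_g^0$. So ``closed subspace'' cannot follow from completeness of $C_0$ in the sup norm alone. My first attempt would be a reverse estimate. Axiom \textbf{(A)-(ii)} gives $\|\vp(s)\| = \|\vp_s(0)\|\le H\|\vp_s\|_{\bb}\le H\|\vp\|_{\tau}$ for $s\in[\tau,0]$. This controls the sup norm only on $[\tau,0]$, not on all of $(-\infty,0]$. So if $\vp^n\to\vp$ in $\bb_{\tau}$ with $\vp^n\in C_0$, we obtain uniform convergence on $[\tau,0]$, but not necessarily membership $\vp\in C_0$.

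If this gap cannot be closed, I would read ``closed subspace'' in the sense the paper appears to need: $C_0$ is a Banach space in its own norm, continuously embedded and $V$-invariant in $\bb_{\tau}$. The main remaining task is then to check that the restriction of $V$ to $C_0$ is strongly continuous in the sup norm. This follows from uniform continuity of $\vp$ together with continuity of $t\mapsto T(t)\vp(0)$ at $0$, and it is precisely what the subsequent Dyson--Phillips construction uses.
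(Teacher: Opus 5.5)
Where the paper actually gives an argument, yours coincides with it. The paper proves only the $\bb_{\tau}$-continuity of $t\mapsto V(t)\vp$, by applying Lemma~\ref{L3.1} to $x(t)=T(t)\vp(0)$ extended by $\vp$, and calls every other assertion immediate. Your checks of those other assertions are correct: that $V(t)$ maps $\bb_{\tau}$ boundedly into itself, the bound $\|\vp\|_{\tau}\le Q\|\vp\|_{\infty}$ on $C_{0}((-\infty,0],X)$ via uniform continuity, and the $V$-invariance of $C_{0}((-\infty,0],X)$. One small point: deriving $V(t+\sigma)=V(t)V(\sigma)$ from Proposition~\ref{P4} restricts you to $t+\sigma\le a$. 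The identity is actually a direct pointwise consequence of the definition of $V$ and the semigroup law of $T$, valid for all $t,\sigma\ge 0$.

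Your doubt about closedness is well founded, and no argument can remove it, because that part of the statement is false in general. Take $\bb=C_{g}^{0}(X)$ and fix $x_{0}\neq 0$. Let $\zeta_{n}$ be continuous with $0\le\zeta_{n}\le 1$, $\zeta_{n}=1$ on $[-n,0]$ and $\zeta_{n}=0$ on $(-\infty,-n-1]$. Then $\zeta_{n}x_{0}\in C_{00}\subset C_{0}((-\infty,0],X)$. For $\tau\le s\le 0$, $[(1-\zeta_{n})x_{0}]_{s}(\ta)\neq 0$ forces $\ta<-n-s\le -n-\tau$, so
\[
\|\zeta_{n}x_{0}-x_{0}\|_{\tau}\le \|x_{0}\|\sup_{\ta\le -n-\tau}\frac{1}{g(\ta)}\longrightarrow 0, \quad n\to\infty,
\]
by (g-2). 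Yet the constant function $x_{0}$ lies in $\bb_{\tau}$ but not in $C_{0}((-\infty,0],X)$. The same construction works in $C_{0}\times L^{p}(\rho,X)$ when $\int_{-\infty}^{0}\rho(\ta)\,d\ta<\infty$.

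So the gap lies in the statement and in the proof the paper omits as immediate, not in your proposal. Your reading is the correct repair, and it is exactly what is used later:
the Cauchy estimates in Lemma~\ref{L4.4} are uniform in $\ta\le 0$, so the limits exist in the Banach space $(C_{0}((-\infty,0],X),\|\cdot\|_{\infty})$;
the regulatedness argument in Lemma~\ref{L4.3} needs strong continuity of $V$ on that space in the sup norm, which your last paragraph supplies;
$\bb_{\tau}$-bounds are then transferred through $\|\cdot\|_{\tau}\le Q\|\cdot\|_{\infty}$.
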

\begin{proof} Most of the assertions in this statement are immediate, so we will omit their proofs.
We will  only justify that  the map  $V$ is strongly continuous on $\bb_{\tau}$. However, this assertion is also an immediate consequence of
Lemma~\ref{L3.1}. In fact, for $\vp \in \bb_{\tau}$, we define
\[
x(t) = \left\{ \begin{array}{ccl} T(t) \vp(0), & & t \geq 0, \\ \vp(t), & & t \leq 0, \end{array} \right.
\]
then $x_{t} = V(t) \vp$ for $t \geq 0$. Applying Lemma~\ref{L3.1} we conclude that $V(\cdot) \vp$ is a continuous function.

\textcolor{red}{}
\end{proof}

\begin{remark}
    \rm
In what follows, we denote by $N_{2}$ a constant bounding the translation semigroup, so that
\[
\|V(t)\| \leq N_{2} \quad \text{for all } 0 \leq t \leq a.
\]

\end{remark}

Before addressing the main result of this section, we construct the necessary analytical machinery by establishing some technical lemmas on the integration of regulated functions.

\begin{lema} \label{L4.2} Let $f : [a, b] \to X $ be a regulated function.
Then $f$ is Riemann integrable.
\end{lema}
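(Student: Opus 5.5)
The plan is to use the fact recalled in Section~\ref{prel}: every regulated function on $[a,b]$ is the uniform limit of a sequence of step functions. Riemann integrability will then follow by a standard sandwich (Cauchy) argument on Riemann sums. Since $X$ is a Banach space, Riemann integrability can be phrased through the Cauchy criterion: for every $\ve>0$ there is $\delta>0$ such that any two tagged partitions $P,P'$ of mesh less than $\delta$ satisfy $\|S(f,P)-S(f,P')\|<\ve$. Completeness of $X$ then yields the limit of the Riemann sums, so it suffices to verify this criterion.

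The first step is to check directly that a step function $g=\sum_{j=1}^{m} x_j \chi_{I_j}$ (with finitely many jump points $a=t_0<t_1<\dots<t_m=b$, and arbitrary values at the $t_j$) is Riemann integrable, with integral $\sum_j x_j (t_j-t_{j-1})$. For a tagged partition of mesh less than $\delta$, the Riemann sum differs from this value only on the subintervals that contain some jump point $t_j$. There are at most $2(m+1)$ such subintervals, each of length less than $\delta$, and on each the error is bounded by $2\|g\|_\infty\,\delta$. Hence the total error is at most $4(m+1)\|g\|_\infty\delta$, which tends to $0$ as $\delta\to0$.

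Next, given $\ve>0$, choose a step function $g$ with $\|f-g\|_\infty<\ve/(3(b-a))$. For any tagged partition $P$ we have $\|S(f,P)-S(g,P)\|\le (b-a)\|f-g\|_\infty<\ve/3$. Choose $\delta$ from the step-function case so that $\|S(g,P)-S(g,P')\|<\ve/3$ whenever $P$ and $P'$ both have mesh less than $\delta$. By the triangle inequality, $\|S(f,P)-S(f,P')\|<\ve$, and the Cauchy criterion gives the integrability of $f$. As a byproduct, $\int_a^b g_n\to\int_a^b f$ for any sequence of step functions $g_n\to f$ uniformly; this will be useful in the later lemmas on integrating regulated functions.

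None of the steps is a serious obstacle. The only point that needs care is the bookkeeping in the step-function case: the values of $g$ at the jump points are arbitrary, and one must correctly count the subintervals that straddle or touch the $t_j$. I would handle this with the crude bound on at most $2(m+1)$ bad cells given above.
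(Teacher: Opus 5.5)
Your proposal is correct and follows the same route as the paper: first show that step functions are Riemann integrable, then pass to uniform limits of step functions. You supply the details the paper leaves to a citation and a one-line remark, namely the explicit error estimate for step functions and the Cauchy criterion for Riemann sums in the Banach space $X$.
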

\begin{proof}
Assume initially that $f$ is a step function. In similar way as for real functions, it follows from \cite{Gordon}
that $f$ is Riemann integrable. For a general regulated
function $f$ we use that $f$ is the uniform limit of a sequence of step functions.
\end{proof}

\begin{lema} \label{L4.3} For $t\ge s$, let $q^{0} : [s, t] \to C_{0}((-\infty, 0], X) $ be a regulated function.
Then the function $q : [s, t] \to C_{0}((-\infty, 0], X) $ given by
\[
q(\xi) = V(t -\xi) q^{0}(\xi), \; s \leq \xi \leq t,
\]
is also regulated, and
\[
\left[\int_{s}^{t} V(t -\xi) q^{0}(\xi) d \xi \right] (\ta) = \int_{s}^{t} [V(t -\xi) q^{0}(\xi)](\ta) d \xi
\]
for all $- \infty < \ta \leq 0$.
\end{lema}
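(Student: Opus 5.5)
We prove the two assertions in turn: first that $q$ is regulated, then the pointwise identity for the integral, which exists by Lemma~\ref{L4.2}.

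\textbf{Step 1: $q$ is regulated.} Fix $\xi_0 \in [s,t)$ and consider the right limit; the left limit is handled symmetrically. Put $q^0(\xi_0^+) = \psi$ in $C_{0}((-\infty,0],X)$, with $\psi$ taken from the regulated hypothesis on $q^0$. For $\xi > \xi_0$ we split
\[
V(t-\xi)q^0(\xi) - V(t-\xi_0)\psi = V(t-\xi)\bigl[q^0(\xi) - \psi\bigr] + \bigl[V(t-\xi) - V(t-\xi_0)\bigr]\psi .
\]
\begin{itemize}
\item The first term tends to zero. Here we need a uniform bound on $V(\cdot)$ in the sup norm of $C_0$. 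It is immediate: $\|V(\sigma)\phi\|_\infty \le \max\{N,1\}\|\phi\|_\infty$ for $0 \le \sigma \le a$, because $V(\sigma)\phi$ takes values $T(\sigma+\theta)\phi(0)$ and $\phi(\sigma+\theta)$.
\item The second term tends to zero because $V$ is strongly continuous on $C_0$ in the sup norm. To see this, $V(\sigma)\phi$ is the history of the continuous function $x$ equal to $T(\cdot)\phi(0)$ on $[0,a]$ and to $\phi$ on $(-\infty,0]$. Since $\phi \in C_0$, $x$ is uniformly continuous on $(-\infty,a]$, and so $\sigma \mapsto x_\sigma$ is sup-norm continuous.
\end{itemize}
Hence $q(\xi) \to V(t-\xi_0)\psi$ as $\xi \to \xi_0^+$, and $q$ is regulated in $C_0$. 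I work in the sup norm throughout. By Lemma~\ref{L4.1}, $C_0$ is a closed subspace of $\mathcal{B}_\tau$ with $\|\cdot\|_\tau \le Q\|\cdot\|_\infty$, so regularity and Riemann integrability in the sup norm carry over to $\mathcal{B}_\tau$ (and to $\mathcal{B}$).

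\textbf{Step 2: the identity.} For fixed $\theta$, the evaluation map $E_\theta\colon C_0((-\infty,0],X)\to X$, $\phi\mapsto\phi(\theta)$, is bounded linear with norm at most $1$. By Lemma~\ref{L4.2}, $q$ is Riemann integrable in $C_0$, so the integral is the sup-norm limit of Riemann sums $\sum_i q(\eta_i)\Delta\xi_i$. Applying $E_\theta$ to these sums gives the Riemann sums of the $X$-valued function $\xi\mapsto [V(t-\xi)q^0(\xi)](\theta)$. Continuity of $E_\theta$ shows these converge to $\left[\int_s^t q\right](\theta)$. Therefore the scalar-side function is Riemann integrable, with integral equal to $\left[\int_s^t q\right](\theta)$, which is the claim. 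If the integral is intended in the space $\mathcal{B}_\tau$ instead, the continuous inclusion $C_0 \hookrightarrow \mathcal{B}_\tau$ identifies the two integrals.

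\textbf{Main obstacle.} The delicate point is the second term in Step 1: we need strong continuity of $V$ with respect to the norm in which regularity is measured. In $C_0$ with the sup norm this follows from uniform continuity of $\phi$, using the decay $\phi(\theta)\to0$ as $\theta\to-\infty$. The first term then needs only the local uniform bound on $V(\cdot)$ given above.
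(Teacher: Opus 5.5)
Your proof is correct and follows the paper's argument: you use the same splitting $V(t-\xi)[q^0(\xi)-q^0(\xi_0^+)] + [V(t-\xi)-V(t-\xi_0)]q^0(\xi_0^+)$ for the one-sided limits, Lemma~\ref{L4.2} for Riemann integrability, and the bounded evaluation map $E_\theta$ to pass the integral inside. Your explicit sup-norm check that $V(\cdot)$ is uniformly bounded and strongly continuous on $C_0((-\infty,0],X)$ is a useful addition, because the paper only cites strong continuity on $\mathcal{B}_\tau$ (Lemma~\ref{L4.1}), whereas boundedness of $E_\theta$ for every $\theta \le 0$ genuinely requires the sup norm.
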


\begin{proof}
Let $\xi \in [s, t)$ and let $(\xi_{n})_{n}$ be a nonincreasing sequence that converges to $\xi$.
Since the function $q^{0}$ is regulated, we can affirm that $q^{0}(\xi_{n}) \to q^{0}(\xi^{+})$ as $n \to \infty$. Consequently,
\begin{eqnarray*}
V(t - \xi_{n}) q^{0}(\xi_{n}) - V(t - \xi) q^{0}(\xi^{+}) & = &
V(t - \xi_{n}) [q^{0}(\xi_{n}) - q^{0}(\xi^{+})] \\
& & + [V(t - \xi_{n}) q^{0}(\xi^{+}) - V(t - \xi) q^{0}(\xi^{+})]
\end{eqnarray*}
It is clear that the first term on the right hand side converges to zero. Using  that $(V(t))_{t \geq 0}$ is a
strongly continuous semigroup, we infer that the second term on the right hand side also converges to zero.
Combining these assertions, we obtain
that $q$ has right-sided limits. Proceeding in similar way, we can show that $q$ has left-sided limits.

\noindent Consequently,  there exists ${\displaystyle \int_{s}^{t} V(t -\xi) q^{0}(\xi) d \xi}$ as a Riemann integral in $C_{0}((-\infty, 0], X) $.
In addition, for $\ta \leq 0$, $E_{\ta} :  C_{0}((-\infty, 0], X) \to X$ given by $E_{\ta}(\vp) = \vp(\ta)$ is a bounded linear map. This implies that
\begin{eqnarray*}
E_{\ta} \left(\int_{s}^{t} V(t -\xi) q^{0}(\xi) d \xi \right) & = & \left[\int_{s}^{t} V(t -\xi) q^{0}(\xi) d \xi \right] (\ta) \\
& = & \int_{s}^{t} E_{\ta} \left( V(t -\xi) q^{0}(\xi) \right) d \xi \\
& = & \int_{s}^{t}  \left[ V(t -\xi) q^{0}(\xi) \right] (\ta) d \xi,
\end{eqnarray*}
which completes the proof.
\end{proof}

Inspired by the approach in \cite{BMM}, we are now in a position to develop our framework. Since $(T(t))_{t \geq 0}$ is a strongly continuous semigroup, we know that (\cite{EN,Pa}) there exist $\widetilde{M}  \geq 1$
and $\om \in \real$ such that
\[
\|T(t)\| \leq \widetilde{M} e^{\om t}, \; t \geq 0.
\]
In this case, $\|R(\la, A) \| \leq \frac{\widetilde{M}}{\la - \om}$ for all $ \la > \om$.
Hence, we can take $N_{1}$ and $\om_{1} > \max \{0, \om\} $ large enough
so that
\begin{equation}
\|R(\la, A)\| \leq \frac{N_{1}}{\la} \label{equ4.8}
\end{equation}
for all $\la \geq \om_{1}$.
We also observe  that $\la R(\la, A) x \longrightarrow x$ as $\la \to \infty$ for all $x \in X$.
In what follows, we assume that $\la \geq \om_{1}$, which implies that
$F_{\la} : X \to C_{0}((-\infty, 0], X)$ is well defined.

\begin{lema} \label{L4.4} Let $g \in G([0, a], X)$.
Then
\[
\lim_{\la \to \infty} \int_{s}^{t} V(t -\xi) \la F_{\la} R(\la, A) g(\xi) d \xi
\]
exists in $C_{0}((-\infty, 0], X)$, and the convergence is uniform for $0 \leq s \leq t \leq a$.
\end{lema}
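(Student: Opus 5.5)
The plan is to identify the limit explicitly and then prove convergence in the sup norm of $C_{0}((-\infty,0],X)$, uniformly in $(s,t)$.

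\textbf{Existence of the integrals.} For fixed $\la\geq\om_{1}$, the map $\xi\mapsto \la F_{\la}R(\la,A)g(\xi)$ is regulated with values in $C_{0}((-\infty,0],X)$, because $F_{\la}$ and $\la R(\la,A)$ are bounded linear operators and $g\in G([0,a],X)$. By Lemma~\ref{L4.3} the integral exists as a Riemann integral in $C_{0}((-\infty,0],X)$ and may be evaluated pointwise in $\ta$. From the formula for $V$ we get, for $\ta\le 0$,
\[
\Big[\int_{s}^{t} V(t-\xi)\la F_{\la}R(\la,A)g(\xi)\,d\xi\Big](\ta)=I_{1}^{\la}(\ta)+I_{2}^{\la}(\ta),
\]
where
\[
I_{1}^{\la}(\ta)=\int_{s}^{\max\{s,t+\ta\}} T(t+\ta-\xi)\,\la R(\la,A)g(\xi)\,d\xi ,
\]
\[
I_{2}^{\la}(\ta)=\int_{\max\{s,t+\ta\}}^{t} e^{\la(t+\ta-\xi)}\,\la R(\la,A)g(\xi)\,d\xi .
\]
The first integral is empty when $t+\ta<s$.

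\textbf{Candidate limit.} Define $z_{s,t}(\ta)=\int_{s}^{t+\ta}T(t+\ta-\xi)g(\xi)\,d\xi$ for $t+\ta\ge s$, and $z_{s,t}(\ta)=0$ otherwise. This function is continuous, since it is the mild-solution convolution shifted and then extended by zero, and the pieces agree at $\ta=s-t$. It vanishes for $\ta\le s-t$, so $z_{s,t}\in C_{0}((-\infty,0],X)$.

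\textbf{Estimate of $I_{2}^{\la}$.} In $I_{2}^{\la}$ the exponent is $\le 0$. Using \eqref{equ4.8},
\[
\|I_{2}^{\la}(\ta)\|\le N_{1}\|g\|_{\infty}\int_{0}^{\infty}e^{-\la u}\,du=\frac{N_{1}\|g\|_{\infty}}{\la}.
\]
This bound tends to $0$ uniformly in $\ta$, $s$ and $t$.

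\textbf{Estimate of $I_{1}^{\la}$ (main step).} We have
\[
\|I_{1}^{\la}(\ta)-z_{s,t}(\ta)\|\le N a\sup_{0\le\xi\le a}\|\la R(\la,A)g(\xi)-g(\xi)\|,
\]
so it suffices to show that $\la R(\la,A)g(\xi)\to g(\xi)$ uniformly in $\xi\in[0,a]$. First, the range of a regulated function is relatively compact. Indeed, $g$ is a uniform limit of step functions, so for each $\ve>0$ its range lies within $\ve$ of a finite set, i.e.\ it is totally bounded. Second, the operators $\la R(\la,A)$, $\la\ge\om_{1}$, are uniformly bounded by $N_{1}$ and converge strongly to $I$. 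An equicontinuous family that converges strongly converges uniformly on compact sets, by the standard $\ve/3$ argument with a finite $\ve$-net. This gives the required uniform convergence.

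\textbf{Conclusion.} Combining the two estimates,
\[
\Big\|\int_{s}^{t} V(t-\xi)\la F_{\la}R(\la,A)g(\xi)\,d\xi-z_{s,t}\Big\|_{\infty}\to0 \quad\text{as }\la\to\infty,
\]
uniformly for $0\le s\le t\le a$. Hence the limit exists in $C_{0}((-\infty,0],X)$ and equals $z_{s,t}$.
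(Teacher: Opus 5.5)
Your proof is correct, but it reaches the conclusion by a somewhat different route from the paper's.

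\textbf{What is shared.} The paper splits $[W_{\lambda}(t,s)](\theta)$ exactly as you do, into the $T$-part on $[s,t+\theta]$ and the exponential part on $[t+\theta,t]$. It also bounds the exponential part by $N_{1}\|g\|_{\infty}/\lambda$, just as you do.

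\textbf{Where you differ.} The paper never identifies the limit. It shows instead that $\lambda\mapsto W_{\lambda}(t,s)$ is Cauchy in the sup norm, estimating the $T$-part by
\[
N\int_{0}^{a}\|[\lambda R(\lambda,A)-\mu R(\mu,A)]g(\xi)\|\,d\xi .
\]
This tends to $0$ by the pointwise strong convergence $\lambda R(\lambda,A)x\to x$ together with dominated convergence. The paper then invokes completeness of $C_{0}((-\infty,0],X)$. You exhibit the limit $z_{s,t}$ explicitly and prove convergence to it directly. Your bound is uniform in $\xi$, and it comes from two facts: the range of a regulated function is relatively compact, and $\{\lambda R(\lambda,A)\}$ is equicontinuous and converges strongly.

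\textbf{What each route buys.} Your explicit formula gives more information. It shows directly that the exponential part disappears in the limit, and that evaluating at $\theta=0$ gives $\int_{s}^{t}T(t-\xi)g(\xi)\,d\xi$; the paper re-derives both facts separately in the proof of Theorem~\ref{P4.1}(i) and (iii). It also makes the continuity of the limit in $(t,s)$, noted in the subsequent remark, transparent. The paper's argument needs only strong convergence plus dominated convergence for the $T$-part, so that part does not use the compact-range property of regulated $g$. You could do the same: replace your $\sup_{\xi}$ bound by $N\int_{0}^{a}\|\lambda R(\lambda,A)g(\xi)-g(\xi)\|\,d\xi$ and drop the compactness step entirely.
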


\begin{proof} Since $F_{\la}$ is a bounded linear map, the function $ F_{\la} R(\la, A) g$ is a regulated function from
$[0, a]$ into $C_{0}((-\infty, 0], X) $. It follows from Lemma~\ref{L4.3} that the integral in
\[
W_{\la}(t, s) = \int_{s}^{t} V(t -\xi) \la F_{\la} R(\la, A) g(\xi) d \xi.
\]
exists as a Riemann integral in  $C_{0}((-\infty, 0], X)$. In addition,
\[
[W_{\la}(t, s)](\ta) = \int_{s}^{t} [V(t -\xi) \la F_{\la} R(\la, A) g(\xi)](\ta) d \xi
\]
for all $- \infty < \ta \leq 0$.

For $t + \ta \geq s$, using definitions of $(V(t))_{t \geq 0}$ and $F_{\la}$, we have that
\begin{eqnarray*}
[W_{\la}(t, s)](\ta) & = & \int_{s}^{t + \ta} [V(t -\xi) \la F_{\la} R(\la, A) g(\xi)](\ta) d \xi \\
& & + \int_{t + \ta}^{t} [V(t -\xi) \la F_{\la} R(\la, A) g(\xi)](\ta) d \xi \\
& = & \int_{s}^{t + \ta} T(t -\xi + \ta) \la  R(\la, A) g(\xi) d \xi + \int_{t + \ta}^{t}  \la e^{\la(t -\xi + \ta)} R(\la, A) g(\xi) d \xi.
\end{eqnarray*}
Hence, for $\la$ and $\mu$ large enough, we obtain
\begin{align*}
\lefteqn{\|[W_{\la}(t, s)](\ta) - [W_{\mu}(t, s)](\ta)\|}\\
 \leq & N \int_{s}^{t + \ta} \| [\la  R(\la, A) - \mu  R(\mu, A)] g(\xi)\| d \xi \\
& + \int_{t + \ta}^{t}  \|\la e^{\la(t -\xi + \ta)} R(\la, A) - \mu e^{\mu(t -\xi + \ta)} R(\mu, A)\| \|g(\xi)\| d \xi \\
\leq & N \int_{0}^{a} \| [\la  R(\la, A) - \mu  R(\mu, A)] g(\xi)\| d \xi +
\int_{ \ta}^{0}  (\la e^{\la \xi} \|R(\la, A)\| + \mu e^{\mu \xi} \|R(\mu, A)\|  d \xi ) \|g\|_{\infty} \\
\leq & N \int_{0}^{a} \| [\la  R(\la, A) - \mu  R(\mu, A)] g(\xi)\| d \xi +
N_{1} \left(\frac{1}{\la} + \frac{1}{\mu}\right)  \|g\|_{\infty}.
\end{align*}
The first term on the right hand side converges to zero as $\la, \mu \to \infty$ by our previous remark and applying the Lebesgue
dominated convergence theorem. Likewise, the second term on the right hand side converges to zero as $\la, \mu \to \infty$.

For $t + \ta < s$ and every $\xi \in [s, t]$ we have that $t -\xi + \ta \leq 0$. Therefore, using again
the  definitions of $(V(t))_{t \geq 0}$ and $F_{\la}$, we obtain that
\[
\|[W_{\la}(t, s)](\ta) - [W_{\mu}(t, s)](\ta)\|  \leq N_{1} \left(\frac{1}{\la} + \frac{1}{\mu}\right)  \|g\|_{\infty}.
\]
Moreover, we note that in both cases the convergence is independent of $\ta \leq 0$.
Hence, combining both cases, we conclude that the limit
\[
\lim_{\la \to \infty} \int_{s}^{t} V(t -\xi) \la F_{\la} R(\la, A) g(\xi) d \xi
\]
exists in $C_{0}((-\infty, 0], X)$. Furthermore, by the preceding estimates the convergence is uniform for
$0 \leq s \leq t \leq a$.

\end{proof}

\begin{remark}
    \rm 
It follows from Lemma~\ref{L4.4} that the function $q : \Delta \to  C_{0}((-\infty, 0], X)$ given by
\[
q(t,s)= \lim_{\la \to \infty} \int_{s}^{t} V(t -\xi) \la F_{\la} R(\la, A) g(\xi) d \xi
\]
is continuous.
\end{remark}

We now define inductively a sequence of operators on $\mathcal{B}_{\tau}$ by
\begin{eqnarray*}
U_{0}(t, s) \vp & = & V(t - s) \vp, \\
U_{n}(t, s) \vp & = & \lim_{\la \to \infty} \int_{s}^{t} V(t -\xi) \la F_{\la} R(\la, A) \La(U_{n -1}(r(\xi), s) \vp) \, d \xi, \quad n \geq 1,
\end{eqnarray*}
for all $0 \leq s \leq t \leq a$ and $\vp \in \mathcal{B}_{\tau}$. Here, for any $\xi \in [s,t]$ such that $r(\xi) - s < 0$, the terms under the integral sign are defined as
\begin{equation}
U_{n-1}(r(\xi), s) \vp := 
\begin{cases} 
\vp_{r(\xi) - s}, & \text{if } n = 1, \\
0, & \text{if } n \geq 2.
\end{cases}
\end{equation}
We define $U_{\La}(t, s)$ by the Dyson--Phillips type expansion
\begin{equation}
U_{\La}(t, s) = \sum_{n = 0}^{\infty} U_{n}(t, s), \quad 0 \leq s \leq t \leq a. \label{equ4.5}
\end{equation}

\begin{pr}\label{serie D-P}
The series \eqref{equ4.5} converges in $\lc(\bb_{\tau})$ uniformly on $\Delta$.    
\end{pr}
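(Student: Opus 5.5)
The plan is to prove by induction on $n$ an estimate of the form
\[
\|U_{n}(t,s)\vp\|_{\tau} \leq C_{0}\,\frac{(L(t-s))^{n}}{n!}\,\|\vp\|_{\tau}, \qquad (t,s)\in\Delta,\ \vp\in\bb_{\tau},
\]
with constants $C_{0},L$ independent of $n$, $t$, $s$ and $\vp$. Since $\sum_{n}(La)^{n}/n!<\infty$, the Weierstrass M-test then gives convergence of \eqref{equ4.5} in the operator norm of $\lc(\bb_{\tau})$, uniformly on $\Delta$.

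\emph{Pointwise sup bound.} For $n\geq 1$ the integrand in the definition of $U_{n}$ is regulated, so by Lemma~\ref{L4.4} the limit exists in $C_{0}((-\infty,0],X)$. I would bound its sup norm using the explicit formula from the proof of Lemma~\ref{L4.4}. Put $g(\xi)=\La(U_{n-1}(r(\xi),s)\vp)$ and fix $\ta\leq 0$.
\begin{itemize}
\item If $t+\ta\geq s$, the value at $\ta$ is
\[
\int_{s}^{t+\ta}T(t-\xi+\ta)\la R(\la,A)g(\xi)\,d\xi+\int_{t+\ta}^{t}\la e^{\la(t-\xi+\ta)}R(\la,A)g(\xi)\,d\xi .
\]
By \eqref{equ4.8}, the first term is at most $NN_{1}\int_{s}^{t}\|g(\xi)\|\,d\xi$. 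The second term is at most $N_{1}\int_{t+\ta}^{t}e^{\la(t-\xi+\ta)}\|g(\xi)\|\,d\xi$, which tends to $0$ as $\la\to\infty$.
\item If $t+\ta<s$, only the exponential term appears, and it also tends to $0$.
\end{itemize}
Letting $\la\to\infty$ gives
\[
\|U_{n}(t,s)\vp\|_{\infty}\leq NN_{1}\|\La\|\int_{s}^{t}\|U_{n-1}(r(\xi),s)\vp\|_{\bb}\,d\xi .
\]
Lemma~\ref{L4.1} then gives $\|U_{n}(t,s)\vp\|_{\tau}\leq Q\|U_{n}(t,s)\vp\|_{\infty}$, and trivially $\|\cdot\|_{\bb}\leq\|\cdot\|_{\tau}$.

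\emph{Induction.}
\begin{itemize}
\item Base $n=0$: $\|U_{0}(t,s)\vp\|_{\tau}\leq N_{2}\|\vp\|_{\tau}$.
\item Step $n=1$: if $r(\xi)\geq s$, the integrand norm is at most $\|\La\|N_{2}\|\vp\|_{\tau}$. If $r(\xi)<s$, then $r(\xi)-s\geq r(\xi)-\xi\geq\tau$, so $\|\vp_{r(\xi)-s}\|_{\bb}\leq\|\vp\|_{\tau}$.
\item Step $n\geq 2$: the term vanishes when $r(\xi)<s$. When $r(\xi)\geq s$, use the induction hypothesis at $(r(\xi),s)$ together with $r(\xi)-s\leq\xi-s$.
\end{itemize}
Thus, with $L=QNN_{1}\|\La\|$ and $C_{0}=\max\{1,N_{2}\}$,
\[
\|U_{n}(t,s)\vp\|_{\tau}\leq L\int_{s}^{t}C_{0}\frac{(L(\xi-s))^{n-1}}{(n-1)!}\|\vp\|_{\tau}\,d\xi=C_{0}\frac{(L(t-s))^{n}}{n!}\|\vp\|_{\tau}.
\]
Linearity of each $U_{n}(t,s)$ follows inductively from linearity of the limit and of the integral.

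\emph{Main obstacle.} The delicate step is the sup bound: passing the limit $\la\to\infty$ inside and checking that the exponential boundary term vanishes uniformly in $\ta$, including near $\ta=0$. I would redo the estimate of Lemma~\ref{L4.4} with $\mu$ removed. A cruder route also suffices: the exponential term is bounded by $N_{1}(1-e^{-\la(t-s)})\sup\|g\|\leq N_{1}\sup\|g\|$, which is $n$-independent but not integral-type, so it would ruin the factorial. Therefore I must use that its limit is $0$, computed pointwise in $\ta$, and then take the sup of the limit function.

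A secondary point is that $\xi\mapsto U_{n-1}(r(\xi),s)\vp$ must be regulated, so that Lemma~\ref{L4.4} applies. This follows from the continuity of $q$ (Remark after Lemma~\ref{L4.4}) and of $U_{0}$, composed with the regulated $r$. The jump at $r(\xi)=s$ for $n=1$ is still of regulated type because $s\mapsto\vp_{s}$ is continuous on $[\tau,0]$.
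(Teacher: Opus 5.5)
Your proof is correct and follows the same plan as the paper's proof. Both arguments bound $\|U_{n}(t,s)\vp\|_{\tau}$ by a constant times $\int_{s}^{t}\|U_{n-1}(r(\xi),s)\vp\|_{\bb}\,d\xi$, split at $r(\xi)=s$ using $\|\vp_{r(\xi)-s}\|_{\bb}\le\|\vp\|_{\tau}$, induct to $(L(t-s))^{n}/n!$ via $r(\xi)-s\le\xi-s$, and finish with the M-test; only the constants differ. The paper obtains the one-step bound uniformly in $\la$, before letting $\la\to\infty$, from the operator estimates $\|V(t-\xi)\|_{\lc(\bb_{\tau})}\le N_{2}$ and $\|\la F_{\la}R(\la,A)x\|_{\tau}\le QN_{1}\|x\|$. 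In your pointwise formula the same effect comes from $\la e^{\la(t+\ta-\xi)}\|R(\la,A)\|\le N_{1}$, which already gives the integral-type bound $N_{1}\int_{t+\ta}^{t}\|g(\xi)\|\,d\xi$; so the vanishing of the boundary term, which you call the main obstacle, is true but not needed.
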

\begin{proof}
    Since $(V(t))_{t \geq 0}$ is a strongly continuous semigroup, we can assume that
the function $g$ given by $ g(\xi) = \La(U_{n -1}(r(\xi), s) \vp)$ is a regulated function. Using now Lemma~\ref{L4.4}, for every $n \geq 1$
we infer that $U_{n}$ is well defined.

First, we establish the following preliminary estimations:
\[
\|U_{0}(t, s) \vp\|_{\mathcal{\tau}}=\|V(t-s) \vp\|_{\tau} \leq N_2 \|\vp\|_{\tau}, \quad t\geq s \geq 0,
\]
and 
\[
\|U_{0}(r(\xi),s)\vp\|_{\mathcal{B}} =\|\vp_{r(\xi)-s}\|_{\mathcal{B}}\leq \|\vp\|_{\tau},
\]
for all $\xi \in [s,t]$ such that $r(\xi)-s<0$. The last inequality follows since $r(\xi)-s \in [\tau,0]$.

On the other hand, for every $0\leq s \leq t \leq a$, we infer 

\begin{equation*}
\begin{aligned}
    \lefteqn{\left\|\int_{s}^t V(t-\xi)\lambda F_{\lambda}R(\lambda,A)\Lambda(U_0(r(\xi),s)\vp)\,d\xi \right\|_\tau} \\
    &\leq \int_{s}^t \|V(t-\xi)\lambda F_{\lambda}R(\lambda,A)\Lambda(U_0(r(\xi),s)\vp)\|_\tau \,d\xi \\
    &\leq \lambda N_2 \int_{s}^t \|F_\lambda R(\lambda,A)\Lambda(U_0(r(\xi),s)\vp) \|_{\tau}\,d\xi.
\end{aligned}
\end{equation*}

Observe that the function $\theta \mapsto F_\lambda R(\lambda,A)\Lambda(U_0(r(\xi),s)\vp)(\theta)$ belongs to $C_b((-\infty,0],X)$. Hence, from Remark~\ref{R2.3}, there exists $Q>0$ such that
\begin{equation*}
\begin{split}
 \|F_\lambda R(\lambda,A)\Lambda(U_0(r(\xi),s)\vp)\|_{\tau}&\leq Q\|F_\lambda R(\lambda,A)\Lambda(U_0(r(\xi),s)\vp)\|_{\infty}\\
&\leq Q\|R(\lambda,A)\Lambda(U_0(r(\xi),s)\vp)\|_X.
\end{split}
\end{equation*}
Moreover, from the estimation of the resolvent operator and the fact that $\Lambda$ is a bounded linear map, we obtain
\[\|F_\lambda R(\lambda,A)\Lambda(U_0(r(\xi),s)\vp)\|_{\tau} \leq Q\frac{N_1}{\lambda}\|\Lambda(U_0(r(\xi),s)\vp)\|_X\leq Q\frac{N_1}{\lambda}\|\Lambda\|\|U_0(r(\xi),s)\vp\|_{\mathcal{B}}.\]
Consequently,
\begin{eqnarray*}
    \left\| \int_{s}^tV(t-\xi)\lambda F_{\lambda}R(\lambda,A)\Lambda(U_0(r(\xi),s)\vp)d\xi\right\|_\tau&\leq QN_1N_2\|\Lambda\|\displaystyle\int_{s}^{t}\|U_0(r(\xi),s)\vp\|_{\mathcal{B}}d\xi.
\end{eqnarray*}
Now, since
\begin{eqnarray*}
    \int_{s}^{t}\|U_0(r(\xi),s)\vp\|_{\mathcal{B}}d\xi &=& \int_{\{\xi \in [s,t] \,:\, r(\xi)\geq s\}}\|U_0(r(\xi),s)\vp\|_{\mathcal{B}}d\xi \\
     & &+\int_{\{\xi \in [s,t]\,:\,r(\xi)< s\}}\|U_0(r(\xi),s)\vp\|_{\mathcal{B}}d\xi \\
    &\leq&  \int_{\{\xi \in [s,t] \,:\, r(\xi)\geq s\}}\|U_0(r(\xi),s)\vp\|_{\tau}d\xi\\
    & & + \int_{\{\xi \in [s,t]\,:\,r(\xi)< s\}}\|\varphi_{r(\xi)-s}\|_{\mathcal{B}}d\xi\\
    &\leq& \int_{s}^t N_2 \|\varphi\|_\tau d\xi+\int_{s}^t\|\varphi\|_{\tau}d\xi\\
    &=&(N_2+1)(t-s)\|\varphi\|_{\tau},
\end{eqnarray*}
we deduce that
\[ \left\| \int_{s}^tV(t-\xi)\lambda F_{\lambda}R(\lambda,A)\Lambda(U_0(r(\xi),s)\vp)d\xi\right\|_\tau \leq QN_1N_2\|\Lambda\|(N_2+1)(t-s)\|\vp\|_{\tau},\]
and taking $\lambda \to \infty$, we obtain
\[\|U_1(t,s)\|_{\tau} \leq Q_1N_1N_2\|\Lambda\|(N_2+1)(t-s)\|\vp\|_{\tau}, \quad \text{for all}\quad t \geq s. \]
Similarly,
\begin{eqnarray*}
\|U_{2}(t, s) \vp\|_{\tau} & \leq & N_1N_2 Q\|\Lambda\| \int_{s}^{t} \|U_{1}(r(\xi), s) \vp) \|_{\tau}d \xi \\
&=&N_1N_2Q\|\Lambda\| \int_{\{\xi \in [s,t] \,:\, r(\xi)\geq s\}} \|U_{1}(r(\xi), s) \vp) \|_{\tau}d \xi\\
&\leq & N_1N_2Q\|\Lambda\|QN_1N_2\|\Lambda\|(N_2+1)\|\vp\|_{\tau}\int_{s}^{t}(r(\xi)-s)d\xi\\
&\leq & N_1^2N_2^2Q^2\|\Lambda\|^2(N_2+1)\|\vp\|_{\tau}\int_{s}^{t}(\xi-s)d\xi\\
&=& N_1^2N_2^2Q^2\|\Lambda\|^2(N_2+1)\|\vp\|_{\tau}\frac{(t-s)^2}{2}, \quad \text{for all}\quad t \geq s.
\end{eqnarray*}
Proceeding inductively, we can show that for every $n\geq 1$,
\begin{equation}
\|U_{n}(t, s) \vp\|_{\tau}  \leq (N_1N_{2}Q \|\La\|)^{n}(N_2+1)\frac{(t -s)^{n}}{n!}\|\vp\|_{\tau} \label{equ4.9}, \quad \text{for all}\quad t \geq s.
\end{equation}
This shows that the series $\sum_{n = 0}^{\infty} U_{n}(t, s)$ converges and the
convergence is uniform in $\Delta$. Furthermore, the convergence holds in $\mathcal{L}(\mathcal{B}_\tau)$, since
\[\|U_\Lambda(t,s)\vp\|_{\tau} \leq (N_2+1)e^{N_1N_2Q\|\Lambda\|(t-s)}\|\vp\|_{\tau}.\]

\end{proof}

In the next, we present the main result of this subsection

\begin{theorem}\label{P4.1}
The following statements hold:

\begin{itemize}
\item[(i)] The family $(U_{\La}(t, s))_{t\geq s}$ satisfies the translation property
\begin{equation} \label{equ4.12}
[U_{\La}(t, s) \vp](\ta) = \left\{ \begin{array}{rcl} [U_{\La}(t + \ta, s) \vp] (0), & & t + \ta \geq s, \\
\vp(t + \ta -s), & & t + \ta \leq s, \end{array} \right.
\end{equation}
for all $\ta \leq 0$.
\item[(ii)] For every $\vp \in \bb_{\tau}$ and $0 \leq s \leq t$, the relation 
\begin{equation}
U_{\La}(t, s) \vp = U_{0}(t, s) \vp + \lim_{\la \to \infty} \int_{s}^{t} V(t -\xi) \la F_{\la} R(\la, A) \La
\left(U_{\La}(r(\xi), s)  \vp \right) d \xi \label{equ4.4}
\end{equation}
holds, where the operator $U_{\Lambda}(r(\xi),s)\varphi$ under the integral sign is extended to the past $r(\xi)<s$ by setting
\[
U_{\Lambda}(r(\xi),s)\varphi=\varphi_{r(\xi)-s}.
\]

\item[(iii)] The mild solution of problem \eqref{equ4.1}-\eqref{equ4.2} is given by
\[
x(t, s, \vp) = \left\{\begin{array}{rcl} [U_{\La}(t, s) \vp] (0), & & t \geq s, \\
\vp(t -s), & & t \leq s, \end{array} \right.
\]
and its state history satisfies
\[
x_{t}(\cdot, s, \vp) = U_{\La}(t,s) \vp.
\]
\end{itemize}
\end{theorem}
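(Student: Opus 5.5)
Each term $U_n(t,s)\vp$ is computed pointwise in $\ta$, using Lemma~\ref{L4.3} and the explicit formula for $[W_\la(t,s)](\ta)$ from the proof of Lemma~\ref{L4.4}. The plan then goes (ii), (i), (iii).

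\textbf{Pointwise form of $U_n$ and statement (ii).} Fix $n\ge1$, set $g(\xi)=\La(U_{n-1}(r(\xi),s)\vp)$, and let $\la\to\infty$ in the two-integral representation obtained in Lemma~\ref{L4.4}. For $t+\ta\ge s$ the second integral has norm at most $N_1\|g\|_\infty/\la$, and $\la R(\la,A)\to I$ strongly. By dominated convergence,
\[
[U_n(t,s)\vp](\ta)=\int_s^{t+\ta}T(t+\ta-\xi)\La(U_{n-1}(r(\xi),s)\vp)\,d\xi ,
\]
and for $t+\ta<s$ the same argument gives $[U_n(t,s)\vp](\ta)=0$. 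The uniform convergence in Proposition~\ref{serie D-P} lets us sum in $n$ inside $\lim_\la\int$, since each partial sum is linear. Because $U_{\La}(r(\xi),s)\vp$ is extended by $\vp_{r(\xi)-s}$, which is exactly the $n=1$ convention, while the conventions for $n\ge2$ contribute $0$, the identity $\sum_{n\ge1}U_n=\lim_\la\int V(t-\xi)\la F_\la R(\la,A)\La(\sum_{n\ge0}U_n(r(\xi),s)\vp)\,d\xi$ holds. This is \eqref{equ4.4}. To justify the interchange I will use the bound \eqref{equ4.9}: the $\lambda$-approximants are bounded uniformly in $\la$ by $QN_1N_2\|\La\|\int_s^t\|\cdot\|_{\bb}\,d\xi$, so the limit in $\la$ commutes with the norm-convergent series.

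\textbf{Statement (i).} Evaluate $U_0(t,s)\vp=V(t-s)\vp$ at $\ta$:
\begin{itemize}
\item for $t+\ta\le s$ this gives $\vp(t+\ta-s)$, and every $U_n$ with $n\ge1$ vanishes there;
\item for $t+\ta\ge s$ it gives $T(t+\ta-s)\vp(0)=[V(t+\ta-s)\vp](0)$.
\end{itemize}
The pointwise formula for $U_n$ depends on $(t,\ta)$ only through $t+\ta$, so $[U_n(t,s)\vp](\ta)=[U_n(t+\ta,s)\vp](0)$. Summing over $n$ gives \eqref{equ4.12}.

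\textbf{Statement (iii).} Define $x(t)=[U_\La(t,s)\vp](0)$ for $t\ge s$ and $x(t)=\vp(t-s)$ for $t\le s$. Then (i) says exactly that $x_t=U_\La(t,s)\vp$, and in particular $x_s=\vp$ and $x_{r(\xi)}=U_\La(r(\xi),s)\vp$ in both cases $r(\xi)\gtrless s$. Evaluating \eqref{equ4.4} at $\ta=0$ with the pointwise formula above yields
\[
x(t)=T(t-s)\vp(0)+\int_s^tT(t-\xi)\La(x_{r(\xi)})\,d\xi ,
\]
which is \eqref{equ3.3} with $h=0$. For continuity on $[s,a]$, $t\mapsto U_\La(t,s)\vp$ is continuous into $\bb_\tau$, because each $U_n(\cdot,s)\vp$ is continuous by the remark after Lemma~\ref{L4.4} and the convergence is uniform. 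Composing with evaluation at $0$, which is bounded by axiom (A)-(ii), shows $x$ is continuous. The uniqueness of mild solutions (Remark after Definition~\ref{D8}) then identifies $x$ with $x(\cdot,s,\vp)$.

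\textbf{Main obstacle.} The hardest step is justifying measurability/regularity of $\xi\mapsto\La(U_{n-1}(r(\xi),s)\vp)$, so that Lemma~\ref{L4.4} applies, together with the limit interchanges in (ii). For regularity I will use that $r$ is regulated and that $U_{n-1}(\cdot,s)\vp$ is continuous on $[s,a]$ into $\bb_\tau$, while on $\{r<s\}$ the map $\xi\mapsto\vp_{r(\xi)-s}$ is continuous in $\bb$ by the definition of $\bb_\tau$. The composition is then regulated, with a possible jump where $r$ crosses $s$, which is harmless. Regulatedness also gives the boundedness of $g$ needed for $\|g\|_\infty$ in the dominated-convergence estimates above.
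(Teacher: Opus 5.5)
Your proposal is correct and follows essentially the same route as the paper. Both evaluate each $U_n(t,s)\varphi$ pointwise through the splitting from Lemma~\ref{L4.4}, with the exponential part vanishing as $\lambda\to\infty$. For (i), both note that the result depends only on $t+\theta$; for (ii), both interchange the $\lambda$-limit with the uniformly convergent series, where the convention $U_{0}(r(\xi),s)\varphi=\varphi_{r(\xi)-s}$ produces the extension. For (iii), both evaluate at $\theta=0$ and identify $x_{r(\xi)}=U_{\Lambda}(r(\xi),s)\varphi$ on both sets $\{r(\xi)\ge s\}$ and $\{r(\xi)<s\}$. Doing (ii) before (i), and adding the explicit continuity and uniqueness check, are only cosmetic improvements.

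One correction to your regularity remark. The extended map $\sigma\mapsto U_{n-1}(\sigma,s)\varphi$ has no jump at $\sigma=s$, because $\varphi_{\sigma-s}\to\varphi=U_0(s,s)\varphi$ in $\mathcal{B}$ and $U_{n-1}(s,s)\varphi=0$ for $n\ge 2$. It is this continuity, not the harmlessness of a jump, that makes its composition with the regulated $r$ regulated: a genuine jump composed with an $r$ oscillating around $s$ need not be regulated.
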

\begin{proof} 
\noindent{\bf(i)} To establish this property, we will show that $U_{n}(t, s)$ satisfies the same property.
It follows from the definition of $(V(t))_{t \geq 0}$ that the assertion is immediate for $n = 0$.

Assume that $t + \ta -s \geq 0$ and $n\geq 1$. In this case,
\begin{eqnarray*}
[U_{n}(t, s) \vp](\ta) & = & \left[\lim_{\la \to \infty} \int_{s}^{t} V(t -\xi) \la F_{\la} R(\la, A) \La(U_{n -1}(r(\xi), s) \vp) d \xi \right](\ta) \\
& = & \lim_{\la \to \infty} \int_{s}^{t} \left[V(t -\xi) \la F_{\la} R(\la, A) \La(U_{n -1}(r(\xi), s) \vp)\right](\ta) d \xi  \\
& = & \lim_{\la \to \infty} \int_{s}^{t + \ta} T(t + \ta - \xi) \la  R(\la, A) \La(U_{n -1}(r(\xi), s) \vp) d \xi \\
& & + \lim_{\la \to \infty} \int_{t + \ta}^{t} \la e^{\la(t -\xi + \ta)}  R(\la, A) \La(U_{n -1}(r(\xi), s) \vp) d \xi.
\end{eqnarray*}
Using Lebesgue's dominated convergence theorem, and our previous estimates \eqref{equ4.8} and \eqref{equ4.9}, we infer that
\[
\lim_{\la \to \infty} \int_{t + \ta}^{t} \la e^{\la(t -\xi + \ta)}  R(\la, A) \La(U_{n -1}(r(\xi), s) \vp) d \xi = 0,
\]
which implies that
\[
[U_{n}(t, s) \vp](\ta) =  \lim_{\la \to \infty} \int_{s}^{t + \ta} T(t + \ta - \xi) \la  R(\la, A) \La(U_{n -1}(r(\xi), s) \vp) d \xi.
\]
On the other hand,
\begin{eqnarray*}
[U_{n}(t + \ta, s) \vp](0) & = & \left[\lim_{\la \to \infty} \int_{s}^{t + \ta} V(t + \ta -\xi) \la F_{\la} R(\la, A) \La(U_{n -1}(r(\xi), s) \vp) d \xi \right](0) \\
& = & \lim_{\la \to \infty} \int_{s}^{t + \ta} T(t + \ta -\xi) \la R(\la, A) \La(U_{n -1}(r(\xi), s) \vp) d \xi,
\end{eqnarray*}
which shows that
\[
[U_{n}(t, s) \vp](\ta) = [U_{n}(t + \ta, s) \vp](0).
\]
For $t + \ta -s < 0$, and proceeding as above, we obtain that
\[
[U_{n}(t, s) \vp](\ta)= 0.
\]
Substituting in the series \eqref{equ4.5}, we obtain that
$U_{\La}(t, s)$ satisfies the translation property \eqref{equ4.12}.
This establishes the assertion  (i).

\medskip

\noindent{\bf(ii)} To establish (ii), let $t\geq s$. Note that
\begin{eqnarray*}
U_{\La}(t, s) \vp & = & U_{0}(t, s) \vp+ \lim_{k\to\infty}\sum_{n = 1}^{k} U_{n}(t, s) \vp,
\end{eqnarray*}
therefore, it suffices to show that
\begin{eqnarray*}
   \lim_{k\to\infty}\sum_{n = 1}^{k} U_{n}(t, s) \vp= \lim_{\la \to \infty} \int_{s}^{t} V(t -\xi) \la F_{\la} R(\la, A) \La
\left(U_{\La}(r(\xi), s)  \vp \right) d \xi.
\end{eqnarray*}
Indeed,  by Lemma~\ref{L4.4} and using Lebesgue's dominated convergence theorem, it follows that 
\begin{eqnarray*}
    \lim_{k\to\infty}\sum_{n = 1}^{k} U_{n}(t, s) \vp&=&\lim_{k\to\infty}\sum_{n = 1}^{k} \lim_{\la \to \infty} \int_{s}^{t} V(t -\xi) \la F_{\la} R(\la, A) \La(U_{n -1}(r(\xi), s) \vp) d \xi\\
    &=&\lim_{\lambda \to \infty}\int_{s}^t V(t -\xi) \la F_{\la} R(\la, A) \La\left(\sum_{n=1}^{\infty}U_{n -1}(r(\xi), s) \vp\right) d \xi\\
    &=&\lim_{\lambda \to \infty}\int_{\{\xi \in [s,t]\,:\, r(\xi) \geq s\}} V(t -\xi) \la F_{\la} R(\la, A) \La\left(\sum_{n=1}^{\infty}U_{n -1}(r(\xi), s) \vp\right) d \xi\\
    & & +\lim_{\lambda \to \infty}\int_{\{\xi \in [s,t]\,:\, r(\xi) < s\}} V(t -\xi) \la F_{\la} R(\la, A) \La\left(\varphi_{r(\xi)-s}\right) d \xi\\
    &=&\int_{s}^tV(t-\xi)\lambda F_\lambda R(\lambda,A)\Lambda(U_\Lambda(r(\xi),s)\vp)d\xi.
\end{eqnarray*} 

which shows  \eqref{equ4.4}, and establishes (ii).

\medskip

\noindent{\bf(iii)} We define the function $y(t) = [U_{\La}(t, s) \vp](0)$ for $t \geq s$, and $y(\xi) = \vp(\xi -s)$ for $\xi \leq s$. It follows from
\eqref{equ4.12} that $y_{t} = U_{\La}(t, s) \vp$. Hence,  using \eqref{equ4.4}, we obtain
\begin{eqnarray*}
y(t) & = & [U_{\La}(t, s) \vp](0) \\
& = & [U_{0}(t, s) \vp](0) + \left[\lim_{\la \to \infty} \int_{s}^{t} V(t -\xi) \la F_{\la} R(\la, A) \La \left(U_{\La}(r(\xi), s)  \vp \right)
d \xi \right](0) \\
& = & T(t -s) \vp(0) + \lim_{\la \to \infty} \int_{s}^{t} T(t -\xi) \la  R(\la, A) \La \left(U_{\Lambda}(r(\xi),s)\vp\right) d \xi \\
& = & T(t -s) \vp(0) + \int_{s}^{t} T(t -\xi)  \La \left(U_{\Lambda}(r(\xi),s)\vp\right) d \xi.
\end{eqnarray*}
Proceeding as in (ii), we split the integral into two regions:
\begin{equation*}
\begin{split}
    \int_{s}^{t} T(t -\xi)  \La \left(U_{\Lambda}(r(\xi),s)\vp\right) d \xi= & \int_{\{\xi \in [s,t]\,:\, r(\xi)\geq s\}} T(t -\xi)  \La \left(U_{\Lambda}(r(\xi),s)\vp\right) d \xi \\
    & +\int_{\{\xi \in [s,t]\,:\, r(\xi)< s\}}T(t -\xi)  \La \left(U_{\Lambda}(r(\xi),s)\vp\right) d \xi.
\end{split}
\end{equation*}
Clearly, the first integral satisfies
\begin{eqnarray*}
    \int_{\{\xi \in [s,t]\,:\, r(\xi)\geq s\}}T(t -\xi)  \La \left(U_{\Lambda}(r(\xi),s)\vp\right) d \xi=\int_{\{\xi \in [s,t]\,:\, r(\xi)\geq s\}}T(t -\xi)  \La \left(y_{r(\xi)}\right) d \xi.
\end{eqnarray*}
On the other hand, for every $\xi \in [s,t]$ such that $r(\xi)<s$, we have
\[
 \vp_{r(\xi)-s}(\theta)=\vp(r(\xi)+\theta-s)=y(r(\xi)+\theta)=y_{r(\xi)}(\theta),\quad \text {for all $\theta \leq 0$.} 
\]
Hence, the second integral can be written as follows:
\begin{eqnarray*}
    \int_{\{\xi \in [s,t]\,:\, r(\xi)< s\}}T(t -\xi)  \La \left(U_{\Lambda}(r(\xi),s)\vp\right) d \xi&=\displaystyle\int_{\{\xi \in [s,t]\,:\, r(\xi)< s\}}T(t -\xi)  \La \left(\vp_{r(\xi)-s}\right) d \xi\\
    &=\displaystyle\int_{\{\xi \in [s,t]\,:\, r(\xi)< s\}}T(t -\xi)  \La \left(y_{r(\xi)}\right) d \xi,
\end{eqnarray*}
which shows that 
\begin{eqnarray*}
    y(t)=T(t-s)\vp(0)+\int_s^tT(t-\xi)\Lambda\left(y_{r(\xi)}\right), \quad t\geq s.
\end{eqnarray*}
Therefore, $y$ is the mild solution of problem \eqref{equ4.1}-\eqref{equ4.2}, which completes the proof.

\end{proof}

\begin{remark}\rm
As an immediate consequence of Theorem~\ref{P4.1} we have that $U(t,s) = U_{\La}(t,s)$, for all $(t,s) \in \Delta$.
Moreover, for $t -s \geq \tau$, it  follows from \eqref{equ4.4} that
\begin{equation} \label{equ4.20}
[U_{\La}(t, s) \vp](\ta) = \left\{ \begin{array}{lcl} T(t + \ta - s) \vp(0) & & \\
+ \displaystyle \int_{s}^{t + \ta} T(t + \ta -\xi)  \La \left(U_{\La}(r(\xi), s)  \vp \right) d \xi, & & t + \ta \geq s, \\\\
\vp(t + \ta -s), & & t + \ta \leq s. \end{array} \right.
\end{equation}
Also, for a function $h \in L^{1}([0, a], X)$, from  Lebesgue's dominated convergence theorem we infer that
\begin{align}\label{eq 5.11}
    \lim_{\la \to \infty} \int_{r(t)}^{t} U_{\La}(r(t), \xi) \la F_{\la} R(\la, A) h(\xi) d \xi = 0,
\end{align}

where we have extended $h(\xi) = 0$ for $\xi \leq 0$,
property that will be used repeatedly in the development that follows.
\end{remark}

\bigskip

\subsection{The variation of constants formula}

We return to study the nonhomogeneous problem \eqref{equ4.1}-\eqref{equ4.2}.
Initially we establish a slight generalization of Gronwall's lemma.
\begin{lema} \label{L4.6} Let $\al: [\tau + s, a] \to [0, \infty)$ be a continuous function such that
$\al(\xi) \leq C_{1}$ for  $\tau + s \leq \xi \leq s$, and
\[
\al(t) \leq C_{1} + C_{2} \int_{s}^{t} \al(r(\xi)) d \xi, \, t \geq s.
\]
Then $\al(t) \leq C_{1} e^{C_{2}(t -s)}$ for $t \geq s$.
\end{lema}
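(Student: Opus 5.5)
The idea is to reduce to the classical Gronwall inequality by controlling $\al(r(\xi))$ through the running maximum of $\al$. First note that $r(\xi)-\xi\geq\tau$ for all $\xi\in[0,a]$, so $r(\xi)\geq \xi+\tau\geq s+\tau$ for $\xi\geq s$. Hence $\al(r(\xi))$ is defined. Since $r$ is regulated and $\al$ is continuous, $\al\circ r$ is regulated and therefore integrable, so the integral makes sense. Set
\[
\beta(t)=\max_{\tau+s\leq \sigma\leq t}\al(\sigma),\qquad t\geq s .
\]
This function is continuous and nondecreasing on $[s,a]$, and $\al(r(\xi))\leq\beta(\xi)$ because $\tau+s\leq r(\xi)\leq\xi$.

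Fix $t\geq s$ and take $\sigma\in[\tau+s,t]$. If $\sigma\leq s$, the hypothesis gives $\al(\sigma)\leq C_1$ directly. If $\sigma\geq s$, the integral inequality together with monotonicity of $\beta$ gives
\[
\al(\sigma)\leq C_1+C_2\int_s^\sigma\beta(\xi)\,d\xi\leq C_1+C_2\int_s^t\beta(\xi)\,d\xi .
\]
Both cases are covered by the same bound, since $C_2\geq 0$ and $\beta\geq0$ (I assume $C_2\geq0$, which is implicit in the statement). Taking the supremum over $\sigma$ yields
\[
\beta(t)\leq C_1+C_2\int_s^t\beta(\xi)\,d\xi,\qquad t\geq s .
\]

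The classical Gronwall--Bellman lemma, applied to the continuous function $\beta$ on $[s,a]$, now gives $\beta(t)\leq C_1e^{C_2(t-s)}$. Since $\al(t)\leq\beta(t)$, the conclusion follows. The only delicate points are checking that $r(\xi)$ stays in the domain $[\tau+s,a]$ and that $\al\circ r$ is measurable. The key step is passing from $\al\circ r$ to the running maximum $\beta$, which absorbs the irregular argument $r(\xi)$.
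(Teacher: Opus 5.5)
Your proof is correct and follows essentially the same route as the paper: you dominate $\alpha(r(\xi))$ by the running maximum $\beta(\xi)=\max_{\tau+s\le\sigma\le\xi}\alpha(\sigma)$, derive $\beta(t)\le C_1+C_2\int_s^t\beta(\xi)\,d\xi$, and apply the classical Gronwall--Bellman lemma. You also make explicit two points the paper leaves implicit: that $r(\xi)\ge s+\tau$, so $\alpha\circ r$ is defined and integrable, and that the case $\sigma\le s$ must be handled separately when passing the bound from $\alpha$ to $\beta$.
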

\begin{proof} We define $\beta(\sa) = \sup_{\tau + s \leq \xi \leq \sa} \al(\xi)$. Since $\beta$ is a non decreasing function,
and $r(\xi) \leq \xi$, we can affirm that
\begin{eqnarray*}
\al(t) & \leq & C_{1} + C_{2} \int_{s}^{t} \beta(r(\xi)) d \xi \\
 & \leq & C_{1} + C_{2} \int_{s}^{t} \beta(\xi) d \xi
\end{eqnarray*}
for $t \geq s$. Hence
\[
\beta(t) \leq  C_{1} + C_{2} \int_{s}^{t} \beta(\xi) d \xi,
\]
which implies that
\[
\al(t) \leq \beta(t) \leq C_{1} e^{C_{2}(t -s)}
\]
for $t \geq s$.

\end{proof}

\begin{lema} \label{L4.5} Let $h \in L^{1}([0, a], X)$, then the limit
\[
\lim_{\la \to \infty} \int_{s}^{t} U_{\La}(t, \xi) \la F_{\la} R(\la, A) h(\xi) d \xi
\]
exists in $C_{0}((- \infty, 0], X)$, and the convergence is uniform for $(t,s) \in \Delta$.
\end{lema}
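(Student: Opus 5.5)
We will prove that the net $W_\la(t,s)=\int_{s}^{t} U_{\La}(t,\xi)\la F_\la R(\la,A)h(\xi)\,d\xi$ is uniformly Cauchy in $C_0((-\infty,0],X)$ as $\la\to\infty$. The plan is to mirror the proof of Lemma~\ref{L4.4}, with $V(t-\xi)$ replaced by $U_{\La}(t,\xi)$. We use the translation property \eqref{equ4.12} and the explicit representation \eqref{equ4.20} to evaluate the integrand pointwise in $\ta$, and then control the two resulting pieces separately.

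First we check that $W_\la(t,s)$ is a well-defined element of $C_0((-\infty,0],X)$. By \eqref{equ4.12}, $U_{\La}(t,\xi)$ maps $F_\la x$ to a continuous function that coincides with $e^{\la(t+\ta-\xi)}x$ for $t+\ta\le\xi$. It therefore leaves $C_0((-\infty,0],X)$ invariant. By Proposition~\ref{serie D-P} and Proposition~\ref{P4} it is uniformly bounded, say $\|U_\La(t,\xi)\|\le N_3$, and strongly continuous. Since $h$ is only integrable, we first approximate $h$ in $L^1$ by regulated (step) functions $g$. Using $\|\la F_\la R(\la,A)\|_{\tau}\le QN_1$ and the uniform bound $N_3$, we get
\[
\|W_\la^{h}(t,s)-W_\la^{g}(t,s)\|\le C\|h-g\|_{L^1},
\]
with $C$ independent of $\la$ and of $(t,s)$. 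It is therefore enough to prove the lemma for regulated $g$, for which the Riemann-integral argument of Lemma~\ref{L4.3} applies, or for which we may use a Bochner integral directly. We then evaluate $[W_\la(t,s)](\ta)=\int_s^t[U_\La(t,\xi)\la F_\la R(\la,A)g(\xi)](\ta)\,d\xi$ through the point evaluations $E_\ta$.

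Fix $\ta\le0$. For $\xi\ge t+\ta$, \eqref{equ4.12} gives $[U_\La(t,\xi)\la F_\la R(\la,A)g(\xi)](\ta)=\la e^{\la(t+\ta-\xi)}R(\la,A)g(\xi)$. As in Lemma~\ref{L4.4}, this part is bounded by $N_1(1/\la)\|g\|_\infty$ in norm, so it tends to zero uniformly in $\ta,t,s$. For $s\le\xi<t+\ta$, the translation property gives $[U_\La(t,\xi)\psi](\ta)=[U_\La(t+\ta,\xi)\psi](0)$ with $\psi=\la F_\la R(\la,A)g(\xi)$. Formula \eqref{equ4.20} then yields
\[
T(t+\ta-\xi)\la R(\la,A)g(\xi)+\int_{\xi}^{t+\ta}T(t+\ta-\eta)\La\bigl(U_\La(r(\eta),\xi)\psi\bigr)\,d\eta .
\]
The first term converges, after integration in $\xi$, to $\int_s^{t+\ta}T(t+\ta-\xi)g(\xi)\,d\xi$. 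The convergence is uniform, by $\la R(\la,A)x\to x$, the bound \eqref{equ4.8} and dominated convergence, exactly as in Lemma~\ref{L4.4}.

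The main obstacle is the second, perturbation term, whose behaviour depends on $\la$ through $U_\La(r(\eta),\xi)\psi$. Since $r(\eta)$ may fall below $\xi$, this is $\psi_{r(\eta)-\xi}$. The plan is to expand $U_\La=\sum_n U_n$ and treat it with the Cauchy estimate. Whenever $r(\eta)\ge \xi$, we apply the translation property again and reduce to quantities of the form $[U_\La(r(\eta),\xi)\psi](0)$, which by induction on the Dyson--Phillips terms behave like the first term: each $U_n$ contributes an $\eta$-integral of $T(\cdot)\la R(\la,A)$ applied to regulated data, and these terms have the factorial bounds \eqref{equ4.9}. Whenever $r(\eta)<\xi$, the term $\la e^{\la(r(\eta)-\xi+\cdot)}R(\la,A)g(\xi)$ appears. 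Integrating it first in $\xi$ (Fubini) over the region where $\xi>r(\eta)$ produces a bounded $\la$-dependent kernel, and it converges by the same mechanism as \eqref{eq 5.11}. Combining these with the summable bounds \eqref{equ4.9} and dominated convergence gives the Cauchy property uniformly in $\ta\le0$ and $(t,s)\in\Delta$. Finally, the case $t+\ta<s$ involves only the exponential piece and is handled as before. This establishes that the limit exists in $C_0((-\infty,0],X)$, uniformly on $\Delta$.
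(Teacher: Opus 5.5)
Your route differs from the paper's, which never expands $U_\La$. The paper applies \eqref{equ4.20} and Fubini on $s\le\xi\le\sigma\le t+\ta$ and splits $\int_s^{\sigma}=\int_s^{r(\sigma)}+\int_{r(\sigma)}^{\sigma}$. This gives a closed delay--Volterra identity: $Z_\la(t,s)(\ta)$ equals $\la$-Cauchy forcing terms plus $\int_s^{t+\ta}T(t+\ta-\sigma)\La(Z_\la(r(\sigma),s))\,d\sigma$. The delay Gronwall Lemma~\ref{L4.6} is then applied to $Z_\la-Z_\mu$. A term-by-term version can also work, but your treatment of the region $\xi\le r(\eta)$ has a genuine gap. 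What enters is $\La\bigl(U_\La(r(\eta),\xi)\psi\bigr)$, and $\La$ is an arbitrary bounded operator on $\bb$, so it depends on the whole segment, not on point values $[U_\La(\cdot,\xi)\psi](0)$. Moreover, for fixed $\xi$ this segment need not converge in $\bb$ as $\la\to\infty$. Even when $r(\eta)\ge\xi$, its tail $\ta'<\xi-r(\eta)$ is $\la e^{\la(r(\eta)+\ta'-\xi)}R(\la,A)g(\xi)$, a boundary layer that is not Cauchy in sup norm or in the $C^0_g$ norm. So the exponential kernel is not confined to the case $r(\eta)<\xi$. Neither your reduction to values at $0$ nor dominated convergence in $\xi$ is available before the $\xi$-integration has been carried out.

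The missing step is to integrate in $\xi$ first over all of $s\le\xi\le\eta$, and to induct on the integrated objects $Y^n_\la(t,s)=\int_s^tU_n(t,\xi)\la F_\la R(\la,A)g(\xi)\,d\xi$ in sup norm. Since $[U_n(t,\xi)\vp](\ta)=\int_\xi^{t+\ta}T(t+\ta-\eta)\La(U_{n-1}(r(\eta),\xi)\vp)\,d\eta$ for $t+\ta\ge\xi$ (and $0$ otherwise), Fubini gives, for $n\ge2$ and $t+\ta\ge s$,
\[
Y^n_\la(t,s)(\ta)=\int_s^{t+\ta}T(t+\ta-\eta)\La\bigl(Y^{n-1}_\la(r(\eta),s)\bigr)\,d\eta ,
\]
with $Y^{n-1}_\la(r(\eta),s)=0$ when $r(\eta)<s$. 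For $n=1$ the argument of $\La$ is instead $Y^0_\la(r(\eta),s)+\int_{\max\{s,r(\eta)\}}^{\eta}\bigl(\la F_\la R(\la,A)g(\xi)\bigr)_{r(\eta)-\xi}\,d\xi$. The first term converges by Lemma~\ref{L4.4}, and the second tends to $0$ uniformly as in \eqref{eq 5.11}. Using $\|\cdot\|_\bb\le Q\|\cdot\|_\infty$ on $C_0((-\infty,0],X)$, convergence propagates in $n$. The recursion also yields factorial sup-norm bounds uniform in $\la$, and Tannery's theorem then sums the series. This is the paper's identity written term by term, with the factorial bounds replacing Lemma~\ref{L4.6}. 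Note also that \eqref{equ4.9} and your constant $N_3$ control only $\|\cdot\|_\tau$, which is strictly weaker than the sup norm on $C_0((-\infty,0],X)$ (e.g.\ for $C^0_g(X)$). Your $L^1$-approximation bound must therefore be proved in sup norm, via the Gronwall argument of Proposition~\ref{P4} with $\|\vp\|_\tau$ replaced by $Q\|\vp\|_\infty$. With that fix, your density step, based on a $\la$-uniform bound in $\|h-g\|_{L^1}$, is sounder than the paper's. The paper instead appeals to uniformity over bounded subsets of $L^1([0,a],X)$, which is not available when $A$ is unbounded, since $\la R(\la,A)x\to x$ is then not uniform on bounded sets.
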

\begin{proof} Assume initially that $h \in C([0, a], X)$. For $\la$ large enough, following \cite{BMM},
we abbreviate the notation by writing
\[
Z_{\la}(t,s) = \int_{s}^{t} U_{\La}(t, \xi) \la F_{\la} R(\la, A) h(\xi) d \xi.
\]
We next extend the domain of $h$ by defining $h(\xi) = 0$ for $\xi \leq 0$.
For $t + \ta \geq s$, using \eqref{equ4.20}, we have
\begin{align*}
& Z_{\la}(t,s)(\ta)   =  \int_{s}^{t + \ta} [U_{\La} (t, \xi) \la F_{\la} R(\la, A) h(\xi)](\ta) d \xi +
\int_{t + \ta}^{t} [U_{\La} (t, \xi) \la F_{\la} R(\la, A) h(\xi)](\ta) d \xi \\
= & \int_{s}^{t + \ta } T(t + \ta - \xi) \la R(\la, A) h(\xi) d \xi \\
&  + \int_{s}^{t + \ta } \int_{\xi}^{t + \ta } T(t + \ta - \sa)
\La( U_{\La} (r(\sa), \xi) \la F_{\la} R(\la, A) h(\xi)) d \sa   d \xi \\
&  + \int_{t + \ta}^{t} \la e^{\la(t + \ta - \xi)} R(\la, A) h(\xi) d \xi.
\end{align*}
On the other hand, changing the order of integration, we obtain
\begin{eqnarray*}
& & \int_{s}^{t + \ta } \int_{\xi}^{t + \ta } T(t + \ta - \sa) \La( U_{\La} (r(\sa), \xi) \la F_{\la} R(\la, A) h(\xi)) d \sa d \xi \\
& = & \int_{s}^{t + \ta } \int_{s}^{\sa} T(t + \ta - \sa) \La( U_{\La} (r(\sa), \xi) \la F_{\la} R(\la, A) h(\xi)) d \xi d \sa.
\end{eqnarray*}
Consequently, because $r(\sigma) \leq \sigma$ and $\Lambda$ is a bounded linear operator, it follows that
\begin{eqnarray*}
& & \int_{s}^{t + \ta } \int_{\xi}^{t + \ta } T(t + \ta - \sa) \La( U_{\La} (r(\sa), \xi) \la F_{\la} R(\la, A) h(\xi)) d \sa d \xi \\
& = & \int_{s}^{t + \ta }  \int_{s}^{r(\sa)} T(t + \ta - \sa) \La( U_{\La} (r(\sa), \xi) \la F_{\la} R(\la, A) h(\xi)) d \xi d \sa \\
& & + \int_{s}^{t + \ta } \int_{r(\sa)}^{\sa} T(t + \ta - \sa) \La( U_{\La} (r(\sa), \xi) \la F_{\la} R(\la, A) h(\xi)) d \xi  d \sa \\
& = & \int_{s}^{t + \ta }   T(t + \ta - \sa)\int_{s}^{r(\sa)} \La( U_{\La} (r(\sa), \xi) \la F_{\la} R(\la, A) h(\xi)) d \xi d \sa \\
& & + \int_{s}^{t + \ta } \int_{r(\sa)}^{\sa} T(t + \ta - \sa) \La( U_{\La} (r(\sa), \xi) \la F_{\la} R(\la, A) h(\xi)) d \xi  d \sa \\
& = & \int_{s}^{t + \ta }   T(t + \ta - \sa)\La\left(\int_{s}^{r(\sa)}  U_{\La} (r(\sa), \xi) \la F_{\la} R(\la, A) h(\xi) d \xi  \right)d\sigma \\
& & + \int_{s}^{t + \ta } \int_{r(\sa)}^{\sa} T(t + \ta - \sa) \La( U_{\La} (r(\sa), \xi) \la F_{\la} R(\la, A) h(\xi)) d \xi  d \sa \\
& = & \int_{s}^{t + \ta }  T(t + \ta - \sa) \La(Z_{\la}(r(\sa), s))  d \sa \\
& & + \int_{s}^{t + \ta }
\int_{r(\sa)}^{\sa} T(t + \ta - \sa) \La( U_{\La} (r(\sa), \xi) \la F_{\la} R(\la, A) h(\xi)) d \xi  d \sa.
\end{eqnarray*}
Therefore, for $t + \ta \geq s$, we have
\begin{eqnarray*}
\lefteqn{Z_{\la}(t,s)(\ta)  = \int_{s}^{t + \ta } T(t + \ta - \xi) \la R(\la, A) h(\xi) d \xi
+ \int_{t + \ta}^{t} \la e^{\la(t + \ta - \xi)} R(\la, A) h(\xi) d \xi} \\
& & + \int_{s}^{t + \ta}
\int_{r(\sa)}^{\sa} T(t + \ta - \sa) \La( U_{\La} (r(\sa), \xi) \la F_{\la} R(\la, A) h(\xi)) d \xi  d \sa \\
& & + \int_{s}^{t + \ta }  T(t + \ta - \sa) \La(Z_{\la}(r(\sa), s))  d \sa.
\end{eqnarray*}
On the other hand, for  for $t + \ta \leq s$, we have from \eqref{equ4.20}
\[
Z_{\la}(t,s)(\ta) =  \int_{s}^{t} \la e^{\la(t + \ta - \xi)} R(\la, A) h(\xi) d \xi.
\]
Let $\la, \mu$ large enough. For $t + \ta \geq s$, we obtain
\begin{eqnarray}
\lefteqn{\|Z_{\la}(t,s)(\ta) - Z_{\mu}(t,s)(\ta) \| \leq  \int_{s}^{t + \ta } \|T(t + \ta - \xi) (\la R(\la, A) - \mu R(\mu, A)) h(\xi)\| d \xi} \nonumber \\
& & + \int_{t + \ta}^{t} \|[\la e^{\la(t + \ta - \xi)} R(\la, A) - \mu e^{\mu (t + \ta - \xi)} R(\mu, A) ] h(\xi) \| d \xi \nonumber \\
& & + \int_{s}^{t + \ta} \int_{r(\sa)}^{\sa} \|T(t + \ta - \sa) \La( U_{\La} (r(\sa), \xi) [\la F_{\la} R(\la, A) - \mu F_{\mu} R(\mu, A)] h(\xi)) \|
d \xi  d \sa \nonumber \\
& & + \int_{s}^{t + \ta }  \|T(t + \ta - \sa) \La(Z_{\la}(r(\sa), s) - Z_{\mu}(r(\sa), s)) \| d \sa. \label{equ4.7}
\end{eqnarray}
We now estimate each term on the right hand side of \eqref{equ4.7}.

For the first and third term we use that
\[
\la R(\la, A) h(\xi) - \mu R(\mu, A) h(\xi) \longrightarrow 0, \; \la, \mu \to \infty.
\]
The Lebesgue dominated convergence theorem implies that
\[
\int_{s}^{t + \ta } \| (\la R(\la, A) - \mu R(\mu, A)) h(\xi)\| d \xi \longrightarrow 0, \; \la, \mu \to \infty,
\]
and the convergence   is uniform for $h$ in bounded subsets of $L^{1}([0,a], X)$. To estimate the second term, we use that
there exists a constant $C_{1} \geq 0$ independent of $\xi \leq a$ such that
\[
\|\la R(\la, A) h(\xi)\| \leq C_{1},
\]
for $\la$ large enough. Hence,
\begin{eqnarray*}
& & \int_{t + \ta}^{t} \|[\la e^{\la(t + \ta - \xi)} R(\la, A) - \mu e^{\mu (t + \ta - \xi)} R(\mu, A) ] h(\xi) \| d \xi \\
& &  \leq  C_{1} \int_{t + \ta}^{t} (e^{\la(t + \ta - \xi)} \|h(\xi)\| + e^{\mu(t + \ta - \xi)} \|h(\xi)\|) d \xi,
\end{eqnarray*}
and using again the Lebesgue dominated convergence theorem we infer  that
\[
\int_{t + \ta}^{t} \|[\la e^{\la(t + \ta - \xi)} R(\la, A) - \mu e^{\mu (t + \ta - \xi)} R(\mu, A) ] h(\xi) \| d \xi \longrightarrow 0, \; \la, \mu \to \infty,
\]
and the convergence is uniform for $h$ in bounded subsets of $L^{1}([0,a], X)$.
For the fourth term, we have
\[
\int_{s}^{t + \ta }  \|T(t + \ta - \sa) \La(Z_{\la}(r(\sa), s) - Z_{\mu}(r(\sa), s)) \| d \sa \leq C_{2} \int_{s}^{t + \ta } \|Z_{\la}(r(\sa), s) - Z_{\mu}(r(\sa), s) \| d \sa
\]
for some constant $C_{2}$. Combining these estimates, and applying Lemma~\ref{L4.6}, we conclude that
\[
\|Z_{\la}(t,s)(\ta) - Z_{\mu}(t,s)(\ta) \| \longrightarrow 0, \; \la, \mu \to \infty,
\]
uniformly for $\ta \leq 0$. Moreover, the convergence is uniform on $h$ for $h$ in bounded sets of  $L^{1}([0,a], X)$.

Finally, returning to the general case, for $h \in L^{1}([0,a], X)$, the assertion is a consequence of the fact that $C([0,a], X)$ is dense $L^{1}([0,a], X)$,
and also from the fact that the convergence for continuous functions  $h$ is uniform  for $h$ in bounded sets of  $L^{1}([0,a], X)$.

\end{proof}

We are in a position to establish the main result of this section, which establishes the mild solution of problem
\eqref{equ4.1}-\eqref{equ4.2} in the space $\bb_{\tau}$.
\begin{theorem} Let $\vp \in \bb_{\tau}$ and $h \in L^{1}([0, a], X)$. Let $u : [s, a] \to \bb_{\tau}$ be the function given by
\begin{equation}
u(t) = U_{\La}(t, s) \vp +  \lim_{\la \to \infty} \int_{s}^{t} U_{\La}(t, \xi) \la F_{\la} R(\la, A) h(\xi) d \xi. \label{equ4.6}
\end{equation}
Then the function $x:(-\infty,a]\to X$ defined by
\[
x(t) = \left\{ \begin{array}{lcl} u(t)(0), & & t \geq s, \\
\vp(t -s), & & t \leq s, \end{array} \right.
\]
is the mild solution of  problem \eqref{equ4.1}-\eqref{equ4.2}.
\end{theorem}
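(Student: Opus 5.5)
We set $z(t)=\lim_{\la\to\infty}\int_{s}^{t}U_{\La}(t,\xi)\la F_{\la}R(\la,A)h(\xi)\,d\xi$, which exists in $C_{0}((-\infty,0],X)\subset\bb_{\tau}$ by Lemma~\ref{L4.5}. By Theorem~\ref{P4.1}(iii), $U_{\La}(t,s)\vp$ is the history at time $t$ of the homogeneous mild solution. By linearity of \eqref{equ3.3} in $(\vp,h)$, it then suffices to treat $\vp=0$. In that case we must show that $y(t)=z(t)(0)$ for $t\ge s$, and $y(t)=0$ for $t\le s$, satisfies
\[
y(t)=\int_{s}^{t}T(t-\sa)\La(y_{r(\sa)})\,d\sa+\int_{s}^{t}T(t-\sa)h(\sa)\,d\sa,
\]
and that $y_{t}=z(t)$. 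Continuity of $y$ on $[s,a]$ follows from the uniform convergence in Lemma~\ref{L4.5}, together with continuity of $Z_{\la}(\cdot,s)$ for fixed $\la$, which comes from the strong continuity of $U_{\La}$ in Proposition~\ref{P4}.

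The computation reuses the identity obtained in the proof of Lemma~\ref{L4.5}. For $t+\ta\ge s$ and $h$ continuous, $Z_{\la}(t,s)(\ta)$ is the sum of four terms: $\int_{s}^{t+\ta}T(t+\ta-\xi)\la R(\la,A)h(\xi)d\xi$, the boundary layer $\int_{t+\ta}^{t}\la e^{\la(t+\ta-\xi)}R(\la,A)h(\xi)d\xi$, the remainder term $\int_{s}^{t+\ta}\int_{r(\sa)}^{\sa}T(t+\ta-\sa)\La(U_{\La}(r(\sa),\xi)\la F_\la R(\la,A)h(\xi))d\xi d\sa$, and $\int_{s}^{t+\ta}T(t+\ta-\sa)\La(Z_{\la}(r(\sa),s))d\sa$. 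For $t+\ta\le s$ only the boundary-layer term remains. I would let $\la\to\infty$ in each term:
\begin{itemize}
\item[(a)] The first term tends to $\int_{s}^{t+\ta}T(t+\ta-\xi)h(\xi)d\xi$, since $\la R(\la,A)\to I$ strongly, by dominated convergence.
\item[(b)] The boundary layer is bounded by $C\int_{t+\ta}^{t}e^{\la(t+\ta-\xi)}\|h(\xi)\|d\xi$ (the bound on $\la R(\la,A)h$ used in Lemma~\ref{L4.5}), so it tends to $0$ for each fixed $\ta<0$. At $\ta=0$ it vanishes identically. For $t+\ta\le s$ this gives $z(t)(\ta)=0$.
\item[(c)] The remainder term tends to $0$ by \eqref{eq 5.11}: for each $\sa$ the inner integral vanishes in the limit, and dominated convergence applies in $\sa$ using the uniform bound on $U_{\La}$ from Proposition~\ref{serie D-P}.
\item[(d)] The last term tends to $\int_{s}^{t+\ta}T(t+\ta-\sa)\La(z(r(\sa)))d\sa$, by the uniform convergence in Lemma~\ref{L4.5} and boundedness of $\La$. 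Here $z(r(\sa))$ is interpreted as $0$ when $r(\sa)<s$, consistent with the $\vp=0$ extension.
\end{itemize}
In the limit this gives
\[
z(t)(\ta)=\int_{s}^{t+\ta}T(t+\ta-\sa)\bigl[\La(z(r(\sa)))+h(\sa)\bigr]d\sa
\]
for $t+\ta\ge s$, and $z(t)(\ta)=0$ for $t+\ta\le s$.

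Taking $\ta=0$ yields the integral equation for $y$ with $z(r(\sa))$ in place of $y_{r(\sa)}$. Comparing the formula at $(t,\ta)$ with the one at $(t+\ta,0)$ shows $z(t)(\ta)=y(t+\ta)$, so $z(t)=y_{t}$, a translation property analogous to Theorem~\ref{P4.1}(i). In particular $z(r(\sa))=y_{r(\sa)}$ for $r(\sa)\ge s$, and both are zero when $r(\sa)<s$. Hence $y$ satisfies \eqref{equ3.3} with $\vp=0$, and by uniqueness of mild solutions (the remark after Definition~\ref{D8}) it is the mild solution. Adding back the homogeneous part completes the argument.

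For general $h\in L^{1}$, the four-term identity was only derived for continuous $h$. I would approximate $h$ by continuous $h_{k}$ in $L^{1}$. Both sides of the final identity are continuous in $h$ with respect to the $L^{1}$ norm: the left side by the uniformity in Lemma~\ref{L4.5} over bounded sets of $L^{1}$, and the right side through the Gronwall bound of Lemma~\ref{L4.6}. The identity therefore passes to the limit.

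The main obstacle is step (b) together with the interchange of limits. The boundary layer does not vanish uniformly in $\ta$ near $0$, so the pointwise limit must be reconciled with the $C_{0}$ limit guaranteed by Lemma~\ref{L4.5}. I would handle this by working pointwise in $\ta$, which suffices because evaluation maps $E_{\ta}$ are continuous. Step (c), justifying \eqref{eq 5.11} inside the $\sa$-integral, is the other delicate point.
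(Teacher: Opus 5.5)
Your argument is correct. Its core is the same computation as the paper's: apply \eqref{equ4.20} to $U_{\La}(t,\xi)\la F_{\la}R(\la,A)h(\xi)$, interchange the order of integration, split the inner integral at $r(\sa)$, and discard the piece over $[r(\sa),\sa]$ via \eqref{eq 5.11}.

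The organization differs in two useful ways.
\begin{itemize}
\item The paper carries $\vp$ along and evaluates only at $\ta=0$. You split off $U_{\La}(t,s)\vp$ by linearity and Theorem~\ref{P4.1}(iii).
\item You pass to the limit in the four-term identity of Lemma~\ref{L4.5} at every $\ta\le 0$. This yields the translation property $z(t)=y_{t}$, i.e.\ $u(t)=x_{t}$. That is exactly what is needed to replace $u(r(\sa))=U_{\La}(r(\sa),s)\vp+\lim_{\la\to\infty}\int_{s}^{r(\sa)}U_{\La}(r(\sa),\xi)\la F_{\la}R(\la,A)h(\xi)\,d\xi$ by $x_{r(\sa)}$ in the term $I_1$. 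The paper makes that replacement ``from \eqref{equ4.6}'' without separately justifying it, so your version is more complete at that point.
\end{itemize}
On the other hand, the paper works with $h\in L^{1}$ directly, since nothing in the Fubini computation uses continuity of $h$. Your density step is therefore optional. If you keep it, the estimate you need is simply $\|z^{g}(t)\|_{\tau}\le C\|g\|_{L^{1}}$, which follows from $\|\la F_{\la}R(\la,A)\|\le QN_{1}$ and the uniform bound on $U_{\La}$ over $\Delta$; Lemma~\ref{L4.6} is not needed.

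A few small corrections.
\begin{itemize}
\item The obstacle you flag in (b) is not real. The quantity $\int_{t+\ta}^{t}e^{\la(t+\ta-\xi)}\|h(\xi)\|\,d\xi$ tends to $0$ uniformly in $\ta$: use absolute continuity of the integral when $h\in L^{1}$, or the bound $\|h\|_{\infty}/\la$ when $h$ is bounded. Your pointwise treatment works anyway.
\item In (d), when $r(\sa)<s$ the quantity $Z_{\la}(r(\sa),s)$ lies outside $\Delta$, so Lemma~\ref{L4.5} does not cover it. It equals $-\int_{r(\sa)}^{s}(\la F_{\la}R(\la,A)h(\xi))_{r(\sa)-\xi}\,d\xi$ and tends to $0$ by the same argument as \eqref{eq 5.11}; this is how the paper handles $I_2$. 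So your interpretation $z(r(\sa))=0$ is right, but it needs this justification.
\item In (c), for $\xi>r(\sa)$ the operator $U_{\La}(r(\sa),\xi)$ is just the shift $\psi\mapsto\psi_{r(\sa)-\xi}$ under the paper's extension convention. The dominating bound therefore comes from $\|\la R(\la,A)\|\le N_{1}$ and $Q$, not from Proposition~\ref{serie D-P}.
\end{itemize}
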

\begin{proof} Assume that $t\geq  s$. It is follows from \eqref{equ4.20} that
\begin{align*}
\lefteqn{x(t)  =
T(t  - s) \vp(0) +  \int_{s}^{t } T(t -\xi)  \La \left(U_{\La}(r(\xi), s)  \vp \right) d \xi} \\
&  + \lim_{\la \to \infty} \int_{s}^{t } T(t -\xi)  \la R(\la, A) h(\xi) d \xi \\
&   + \lim_{\la \to \infty} \int_{s}^{t } \int_{\xi}^{t } T(t  - \sa)
\La\left(U_{\La}(r(\sa), \xi) \la F_{\la} R(\la, A) h(\xi) \right) d \sa d \xi 
\end{align*}

Using the fact that.
\begin{eqnarray*}
    \lim_{\la \to \infty} \int_{s}^{t } T(t -\xi)  \la R(\la, A) h(\xi) d \xi=\int_s^{t}T(t-\xi)h(\xi)d\xi
\end{eqnarray*}
and
\begin{eqnarray*}
    & & \int_{s}^{t } \int_{\xi}^{t } T(t  - \sa) \La( U_{\La} (r(\sa), \xi) \la F_{\la} R(\la, A) h(\xi)) d \sa d \xi \\
& = & \int_{s}^{t  } \int_{s}^{\sa} T(t  - \sa) \La( U_{\La} (r(\sa), \xi) \la F_{\la} R(\la, A) h(\xi)) d \xi d \sa,
\end{eqnarray*}
it follows that
\begin{align*}
&    [u(t)](0)= 
 T(t - s) \vp(0) +  \int_{s}^{t } T(t + \ta -\xi) h(\xi) d \xi +
  \int_{s}^{t } T(t  -\xi)  \La \left(U_{\La}(r(\xi), s)  \vp \right) d \xi \\
&  +  \lim_{\la \to \infty} \int_{s}^{t} T(t - \sa) \La\left( \int_{s}^{\sa}
U_{\La}(r(\sa), \xi) \la F_{\la} R(\la, A) h(\xi)  d \xi \right) d \sa \\
 = &
T(t  - s) \vp(0) +  \int_{s}^{t} T(t  -\xi) h(\xi) d \xi \\
&  + \int_{s}^{t + \ta} T(t + \ta -\sa)  \La \left( U_{\La}(r(\sa), s)  \vp
 +  \lim_{\la \to \infty} \int_{s}^{\sa} U_{\La}(r(\sa), \xi) \la F_{\la} R(\la, A) h(\xi) d \xi \right)  d \sa \\
 = &
T(t  - s) \vp(0) +  \int_{s}^{t } T(t -\xi) h(\xi) d \xi \\
&  +\int_{s}^{t } T(t  -\sa)  \La \left( U_{\La}(r(\sa), s)  \vp
 +  \lim_{\la \to \infty} \int_{s}^{r(\sa)} U_{\La}(r(\sa), \xi) \la F_{\la} R(\la, A) h(\xi) d \xi \right)  d \sa,
\end{align*}
where, in the last equality, we have used equation \eqref{eq 5.11}. To streamline the notation in the subsequent integrals, we partition the interval 
$[s,t]$ into the disjoint sets
\[
I_{s,t}^{+} := \{\sigma \in [s,t] : r(\sigma) \ge s\} 
\quad \text{and} \quad 
I_{s,t}^{-} := \{\sigma \in [s,t] : r(\sigma) < s\}.
\]
With this notation, we split the last integral into two regions:
\begin{eqnarray*}
    \int_{s}^{t } T(t -\sa)  \La \left( U_{\La}(r(\sa), s)  \vp
 +  \lim_{\la \to \infty} \int_{s}^{r(\sa)} U_{\La}(r(\sa), \xi) \la F_{\la} R(\la, A) h(\xi) d \xi \right)  d \sa:=
 I_1+I_2,
\end{eqnarray*}
where
\begin{eqnarray*}
    I_1=\int_{I_{s,t}^{+}} T(t -\sa)  \La \left( U_{\La}(r(\sa), s)  \vp
 +  \lim_{\la \to \infty} \int_{s}^{r(\sa)} U_{\La}(r(\sa), \xi) \la F_{\la} R(\la, A) h(\xi) d \xi \right)  d \sa,
\end{eqnarray*}
and
\begin{eqnarray*}
    I_2=\int_{I_{s,t}^{-}} T(t -\sa)  \La \left( U_{\La}(r(\sa), s)  \vp
 +  \lim_{\la \to \infty} \int_{s}^{r(\sa)} U_{\La}(r(\sa), \xi) \la F_{\la} R(\la, A) h(\xi) d \xi \right)  d \sa.
\end{eqnarray*}
In the first integral, we obtain from the equation \eqref{equ4.6} that
\begin{align*}
    I_1=\int_{I_{s,t}^{+}} T(t -\sa)  \La \left(x_{r(\sigma)}\right)  d \sa.
\end{align*}
For the second integral, we recall that $U_{\Lambda}(r(\sigma),s)\varphi=\varphi_{r(\sigma)-s}$, for $r(\sigma)<s$. Then,
\begin{align*}
    I_2&=\int_{I_{s,t}^{-}} T(t -\sa)  \La \left(\varphi_{r(\sigma)-s}
 -  \lim_{\la \to \infty} \int_{r(\sa)}^{s} \la (F_{\lambda}R(\lambda,A)h(\xi))_{r(\sigma)-\xi}d\xi \right)  d \sa\\
 &=\int_{I_{s,t}^{-}} T(t -\sa)  \La (\varphi_{r(\sigma)-s})  d \sa\\
 &=\int_{I_{s,t}^{-}} T(t -\sa)  \La (x_{r(\sigma)})  d \sa.
\end{align*}
Therefore, for all $t \geq s$,
\begin{eqnarray*}
    x(t)=T(t-s)\varphi(0)+\int_s^tT(t-\xi)\Lambda(x_{r(\xi)})d\xi+\int_s^tT(t-\xi)h(\xi)d\xi,
\end{eqnarray*}
and $x$
is the mild solution of problem \eqref{equ4.1}-\eqref{equ4.2}.

\end{proof}

\begin{remark}
	We emphasize that expression \eqref{equ4.6} is the variation of constants formula for problem \eqref{equ4.1}-\eqref{equ4.2}.
\end{remark}

\section{Acknowledgements}
C. Carrasco was partially supported by ANID Doctorado Nacional No. 21240764; C. A. Gallegos was partially supported by ANID-FONDECYT Iniciaci\'on No. 11260638; M. F. Pinaud was partially supported by DICYT-USACH grant 042632PC-POSTDOC.

\section{Conflict of interest statement}
	On behalf of all authors, the corresponding author states that there is no conflict of interest.
 
\section{Data Availability Statement}
Data sharing not applicable to this article as no datasets were generated or analyzed during the current study.

\end{document}